\documentclass[12pt,reqno]{amsart}
\usepackage{amssymb,amscd,amsmath}
\usepackage[alphabetic]{amsrefs}
\usepackage[all]{xy}
\usepackage[headings]{fullpage}
\usepackage[T1]{fontenc}
\usepackage{libertine}
\usepackage{stackengine}
\usepackage{mathtools}
\usepackage{graphicx}
\usepackage{multirow}
\usepackage{colortbl}
\usepackage{pdfsync}
\usepackage{tikz}

\numberwithin{equation}{section}

%\swapnumbers
\theoremstyle{plain}
\newtheorem{thm}[subsection]{Theorem}

\newtheorem{prop}[subsection]{Proposition}

\newtheorem{lemma}[subsection]{Lemma}
\newtheorem{cor}[subsection]{Corollary}

\newtheorem*{thm*}{Theorem}
\newtheorem*{mthm*}{Main Theorem}
\newtheorem{prop-def}[subsection]{Proposition-Definition}
\newtheorem{def-prop}[subsection]{Definition-Proposition}
\newtheorem{def-lemma}[subsection]{Definition-Lemma}

\theoremstyle{definition}

\newtheorem*{defn*}{Definition}

\theoremstyle{remark}
\newtheorem{rem}[subsection]{Remark}
\newtheorem{rems}[subsection]{Remarks}

\newtheorem{s-example}[subsection]{Example}
\newtheorem{s-examples}[subsection]{Examples}

% \numberwithin{subsection}{section}

% % the following gets changed whenever there are subsections in a section:
% \numberwithin{equation}{section}

% \theoremstyle{plain}
% \newtheorem{thm}[equation]{Theorem}
% \newtheorem{prop}[equation]{Proposition}
% \newtheorem{cor}[equation]{Corollary}
% \newtheorem{lemma}[equation]{Lemma}
% \newtheorem{conj}[equation]{Conjecture}

% \theoremstyle{definition}
% \newtheorem{defn}[equation]{Definition}
% \newtheorem{defns}[equation]{Definitions}

% \theoremstyle{remark}
% \newtheorem{rem}[equation]{Remark}
% \newtheorem{rems}[equation]{Remarks}
% \newtheorem{exer}[equation]{Exercise}
% \newtheorem{exers}[equation]{Exercises}
% \newtheorem{rem-exer}[equation]{Remark/Exercise}
% \newtheorem{rem-exers}[equation]{Remark/Exercises}
% \newtheorem{ex}[equation]{Example}
% \newtheorem{exs}[equation]{Examples}

% provide the letter sha:
\usepackage[OT2,T1]{fontenc}
\DeclareSymbolFont{cyrletters}{OT2}{wncyr}{m}{n}
\DeclareMathSymbol{\sha}{\mathalpha}{cyrletters}{"58}

% curves, surfaces, etc. (caligraphic)

\newcommand{\CC}{\mathcal{C}}

\newcommand{\JJ}{\mathcal{J}}
\newcommand{\KK}{\mathcal{K}}
\newcommand{\QQ}{\mathcal{Q}}

\newcommand{\VV}{\mathcal{V}}
\newcommand{\WW}{\mathcal{W}}

\newcommand{\GG}{{\mathcal{G}}}
\newcommand{\HH}{{\mathcal{H}}}

\renewcommand{\O}{\mathcal{O}}
\newcommand{\FF}{\mathcal{F}}

\newcommand{\MM}{\mathcal{M}}
\newcommand{\NN}{\mathcal{N}}
\newcommand{\OO}{\mathcal{O}}

% fields, rings, groups (bold)
\newcommand{\F}{\mathbb{F}}
\newcommand{\Fp}{{\mathbb{F}_p}}

\newcommand{\kbar}{{\overline{k}}}

\newcommand{\Z}{\mathbb{Z}}

\newcommand{\R}{\mathbb{R}}

\renewcommand{\P}{\mathbb{P}}

% overlines

% ideals, divisors, etc (mostly frak)

%% abbreviations/alternate names
\newcommand{\<}{\langle}
\renewcommand{\>}{\rangle}
\newcommand{\into}{\hookrightarrow}
\newcommand{\onto}{\twoheadrightarrow}
\newcommand{\isoto}{\,\tilde{\to}\,}

\newcommand{\tensor}{\otimes}

\newcommand{\nodiv}{\not|}

%% ?? is this necessary ??
\def\nodiv{\mathrel{\mathchoice{\not|}{\not|}{\kern-.2em\not\kern.2em|}
{\kern-.2em\not\kern.2em|}}}

\newcommand{\GL}{\mathrm{GL}}

\newcommand{\labeledto}[1]{\overset{#1}{\to}}
\newcommand{\labeledlongto}[1]{\overset{#1}{\longrightarrow}}

\newcommand{\psmat}[1]{\bigl(\begin{smallmatrix}#1\end{smallmatrix}\bigr)}

%% operators
\DeclareMathOperator{\codim}{codim}

\DeclareMathOperator{\im}{Im}
\DeclareMathOperator{\Ker}{Ker}
\DeclareMathOperator{\coker}{Coker}

\DeclareMathOperator{\res}{Res}

\DeclareMathOperator{\Hom}{Hom}
\DeclareMathOperator{\aut}{Aut}

\DeclareMathOperator{\gal}{Gal}

\DeclareMathOperator{\Fr}{Fr}

\DeclareMathOperator{\Spec}{Spec}

%% from Alex Perlis
% For comparison, the existing overlap macros: 
% \def\llap#1{\hbox to 0pt{\hss#1}} 
% \def\rlap#1{\hbox to 0pt{#1\hss}} 
\def\clap#1{\hbox to 0pt{\hss#1\hss}}

% https://tex.stackexchange.com/questions/235118/making-a-thicker-cdot-for-dot-product-that-is-thinner-than-bullet
% uses package graphicx
\makeatletter
\newcommand*\bigcdot{\mathpalette\bigcdot@{.5}}
\newcommand*\bigcdot@[2]{\mathbin{\vcenter{
               \hbox{\scalebox{#2}{$\m@th#1\bullet$}}}}}
\makeatother

\numberwithin{equation}{section}

\newcommand{\D}{\mathbb{D}}
\newcommand{\Dk}{\mathbb{D}_k}
\newcommand{\et}{{\acute et}}
\newcommand{\Zps}{\Z/p\Z}
\renewcommand{\and}{\quad\text{and}\quad}
\newcommand{\Hy}{\mathbb{H}}
\renewcommand{\t}{{}^t}

\DeclareMathOperator{\Gal}{Gal}

\begin{document}
\title{$\MakeLowercase{p}$-torsion for unramified
  %$\Z/p\Z$
  Artin--Schreier covers of curves}

\author{Bryden Cais}
\address{Department of Mathematics \\ University of Arizona
 \\ Tucson, AZ~~85721 USA}
\email{cais@arizona.edu}

\author{Douglas Ulmer}
\address{Max Planck Institute for Mathematics, Vivatsgasse 7,
  53111 Bonn Germany}
\address{Department of Mathematics \\ University of Arizona
 \\ Tucson, AZ~~85721 USA}
\email{ulmer@arizona.edu}

\date{\today}

% this is really the 2020 classification, but \subjclass is broken and
% lists 1991 when given option 2020 or no option
\subjclass{Primary 11G20, 14F40, 14H40;
Secondary 11G10, 14G17, 14K15}

% descriptions:
% 11G20 (1980-now) Curves over finite and local fields
% 14F40 (1980-now) de Rham cohomology and algebraic geometry
% 14H40 (1973-now) Jacobians, Prym varieties

% 11G10 (1980-now) Abelian varieties of dimension >1
% 14G17 (2010-now) Positive characteristic ground fields in algebraic geometry
% 14H30 (1973-now) Coverings of curves, fundamental group

\keywords{Curve, finite field, unramified cover, Jacobian,
  $p$-torsion, group scheme, de Rham cohomology, Dieudonn\'e module,
  Frobenius, Verschiebung, Ekedahl--Oort type}

\begin{abstract}
  Let $Y\to X$ be an unramified Galois cover of curves over a perfect
  field $k$ of characteristic $p>0$ with $\Gal(Y/X)\cong\Z/p\Z$, and
  let $J_X$ and $J_Y$ be the Jacobians of $X$ and $Y$ respectively.
  We consider the $p$-torsion subgroup schemes $J_X[p]$ and
  $J_Y[p]$, analyze the Galois-module structure
  of $J_Y[p]$, and find restrictions this structure imposes on $J_Y[p]$ (for
  example, as manifested in its Ekedahl--Oort type) taking $J_X[p]$
  as given.
\end{abstract}

\maketitle

\section{Introduction}
Let $k$ be a perfect field of characteristic $p>0$ with algebraic
closure $\kbar$, and let $X$ be a smooth, proper, geometrically
irreducible curve of genus $g_X$ over $k$.  Let $\pi:Y\to X$ be an
unramified Galois covering with $G:=\gal(Y/X)\cong\Z/p\Z$ and with
$Y$ geometrically irreducible.  Writing $g_Y$ for the genus of $Y$,
the Riemann-Hurwitz formula for $\pi$ says $2g_Y-2=p(2g_X-2)$.

Let $J_X$ and $J_Y$ be the Jacobians of $X$ and $Y$, and let $J_X[p]$
and $J_Y[p]$ be their $p$-torsion subgroup schemes. These are
self-dual $BT_1$ group schemes of orders $p^{2g_X}$ and $p^{2g_Y}$
respectively. (See Section~\ref{s:Dieu} for the definitions of
``self-dual'' and ``$BT_1$''.) Our goal is to describe relations
between $J_Y[p]$ and $J_X[p]$. We first consider the $G$-module
structure of $J_Y[p]$ and show that the latter is close to being
$G$-free in a suitable sense. We then explore the restrictions this
structure imposes on $J_Y[p]$ (for example, as manifested in its
Ekedahl--Oort type) taking $J_X[p]$ as given. In particular, over a
general perfect base field $k$, we deduce inequalities for the
``arithmetic $p$-rank'' of $Y$, namely $\dim_{\Fp}J_Y[p](k)$, and over
an algebraically closed base field $k$, we deduce new restrictions on
the isomorphism type of $J_Y[p]$ which in turn yield new parity
restrictions on the $a$-number of $Y$.

Recall that a $p$-torsion group scheme $G$ over $k$ has a canonical
decomposition
\[\GG\cong \GG_{\et}\oplus \GG_{m}\oplus \GG_{ll}\]
into \'etale, multiplicative, and local-local parts.  (See
Section~\ref{s:Dieu} below.)  Let $f_X$ and $f_Y$ be the $p$-ranks of
$X$ and $Y$ respectively (i.e., the dimensions over $\Fp$ of
$J_X[p]_{\et}(\kbar)$ and $J_Y[p]_{\et}(\kbar)$).  The
Deuring--Shafarevich formula in this context says that
$f_Y-1=p(f_X-1)$.  There is a well-known refinement of this taking
into account the $G$ action.  Indeed, let $\Fp[G]$ be the group ring
of $G$ over $\Fp$.  If $k$ is algebraically closed, then we have
isomorphisms of group schemes with $G$ action:
  \begin{align}
  J_Y[p]_{\et}&\cong \Z/p\Z\oplus
                \left((\Z/p\Z)^{f_X-1}\tensor_{\Fp}\Fp[G]\right)\label{eq:DSet}\\
                  \noalign{and}
  J_Y[p]_{m}&\cong\mu_p\oplus
                 \left((\mu_p)^{f_X-1}\tensor_{\Fp}\Fp[G]\right)\label{eq:DSm}
  \end{align}                  
  where $G$ acts trivially on the factors $\Z/p\Z$ and $\mu_p$.  (This
  follows from \cite[Theorem~2]{Nakajima85} or
  \cite[Theorem~1.5]{Crew84} and Cartier duality.  It may also be
  obtained from the Hochschild-Serre spectral sequences for $\pi$ with
  coefficients in $\Z/p\Z$ and $\mu_p$.)

\subsection*{Filtrations from the $G$ action and their associated
  gradeds}
Our first aim in this paper is to study all of $J_Y[p]$ (i.e., the
local-local part as well as the \'etale and multiplicative parts) and
to allow $k$ to be a general perfect field of characteristic $p$
(giving ``structural'' results analogous to \eqref{eq:DSet} and
\eqref{eq:DSm} for general $k$).  To state the result, we first define
a certain subquotient $\GG_X$ of $J_X[p]$.

  \begin{def-prop}\label{def:GG_X}
    The covering $\pi:Y\to X$ and the isomorphism
    $\gal(Y/X)\cong\Z/p\Z$ give rise to canonical homomorphisms of
    groups schemes $J_X[p]\onto\Z/p\Z$ and $\mu_P\into J_X[p]$.
    Define a \textup{(}self dual $BT_1$\textup{)} group scheme $\GG_X$
    by the exact sequences
  \begin{equation}\label{eq:GG_X,et}
0\to\GG_{X,\et}\to J_X[p]_{\et}\to\Z/p\Z\to0,    
  \end{equation}
  \begin{equation}\label{eq:GG_X,m} 
0\to\mu_p \to J_X[p]_m\to \GG_{X,m}\to0,    
  \end{equation}
and the isomorphism
\[\GG_{X,ll}\cong J_X[p]_{ll}.\]
  \end{def-prop}

  Let $\gamma\in G=\gal(Y/X)$ be the element corresponding to
  $1\in\Z/p\Z$ under the fixed isomorphism $G\cong\Z/p\Z$ and let
  $\delta$ be the element $\gamma-1$ in the group ring $k[G]$.  Then
  $k[G]\cong k[\delta]/(\delta^p)$.  Thus $\delta$ induces a nilpotent
  endomorphism of $J_Y[p]$, and the kernels and images of powers of
  $\delta$ on $J_Y[p]$ give two (generally distinct) $p$-step
  filtrations by self-dual $BT_1$ group schemes.
  The following result describes the associated graded objects of
  these two filtrations. (We use the standard notations $\GG[\delta]$
  and $\GG/\delta$ to denote kernels and cokernels respectively.)

\begin{thm}\label{thm:G-str-genl-k}\mbox{}
   \begin{enumerate}
  \item   There are canonical isomorphisms
    \[ \frac{\delta^{i}J_Y[p]_{\et}}{\delta^{i+1}J_Y[p]_{\et}}\cong
      \frac{J_Y[p]_{\et}[\delta^{p+1-i}]}{J_Y[p]_{\et}[\delta^{p-i}]}
      \cong \GG_{X,\et}
      \quad\text{for $i=1,\dots,p-1$},\]
 as well as exact sequences
    \begin{equation}\label{eq:GG_Y,et,ker} 
    0\to\GG_{X,\et}\to J_Y[p]_{\et}[\delta]\to\Z/p\Z\to0
  \end{equation}
  and
      \begin{equation}\label{eq:GG_Y,et,coker}
    0\to\Z/p\Z\to J_Y[p]_{\et}/\delta\to\GG_{X,\et}\to0.
  \end{equation}
Pull back by $\pi$ induces  a canonical isomorphism
$J_X[p]_\et\cong J_Y[p]_{\et}[\delta]$ which identifies the exact
sequences \eqref{eq:GG_Y,et,ker} and \eqref{eq:GG_X,et}. 
  \item   There are canonical isomorphisms
    \[  \frac{\delta^{i}J_Y[p]_{m}}{\delta^{i+1}J_Y[p]_{m}}\cong
      \frac{J_Y[p]_{m}[\delta^{p+1-i}]}{J_Y[p]_{m}[\delta^{p-i}]}
       \cong \GG_{X,m}
      \quad\text{for $i=1,\dots,p-1$},\]
     as well as exact sequences
    \begin{equation}\label{eq:GG_Y,m,coker}
    0\to\mu_p\to J_Y[p]_{m}/\delta\to\GG_{X,m}\to0
  \end{equation}
  and
      \begin{equation}\label{eq:GG_Y,m,ker}
    0\to\GG_{X,m}\to J_Y[p]_{m}[\delta]\to\mu_p\to0.
  \end{equation}
  Push forward by $\pi$ induces a canonical isomorphism
  $J_Y[p]_{m}/\delta\cong J_X[p]_m$ which identifies the exact
  sequences \eqref{eq:GG_Y,m,coker} and \eqref{eq:GG_X,m}. 
\item We have equalities
  \[\delta^{i}J_Y[p]_{ll}=J_Y[p]_{ll}[\delta^{p-i}]\]
  for $i=0,\dots,p$, as well as canonical isomorphisms
  \[\frac{\delta^{i}J_Y[p]_{ll}}{\delta^{i+1}J_Y[p]_{ll}}\cong
    \frac{J_Y[p]_{ll}[\delta^{p-i}]}{J_Y[p]_{ll}[\delta^{p-i-1}]}
\cong    J_X[p]_{ll}
        \quad\text{for $i=0,\dots,p-1$.}\]
\end{enumerate}
\end{thm}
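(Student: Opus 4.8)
The plan is to pass to Dieudonn\'e modules and reduce the theorem to a computation with the de Rham cohomology of the \'etale cover $\pi$. Over the perfect field $k$ the contravariant Dieudonn\'e functor $\D$ is an exact anti\-equivalence from finite commutative $p$-power-order group schemes to finite-length Dieudonn\'e modules; it commutes with the $G$-action, carries the \'etale/multiplicative/local-local decomposition to the analogous decomposition of modules, and sends $J_Y[p]$ to $M:=H^{1}_{\mathrm{dR}}(Y/k)$ with its $(F,V)$-structure, compatibly with the Hodge filtration $0\to H^{0}(Y,\Omega^1_Y)\to M\to H^{1}(Y,\mathcal O_Y)\to 0$, with Poincar\'e duality, and with $\pi^{*}$, $\pi_{*}$ (which on $M$ become, in the appropriate order, the Gysin/trace map $\tr_\pi\colon H^{1}_{\mathrm{dR}}(Y)\to H^{1}_{\mathrm{dR}}(X)$ and the pullback $H^{1}_{\mathrm{dR}}(X)\to H^{1}_{\mathrm{dR}}(Y)$); likewise $J_X[p]\mapsto M_X:=H^{1}_{\mathrm{dR}}(X/k)$. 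Since $\D$ is additive, $\delta$ on $J_Y[p]$ corresponds to $\delta$ on $M$, and the anti\-equivalence interchanges $\mathcal G[\delta^{a}]$ with $M/\delta^{a}M$ and $\delta^{a}\mathcal G$ with $\delta^{a}M$. Thus it suffices to prove, for $M=M_{\et}\oplus M_{m}\oplus M_{ll}$, the corresponding statements; e.g.\ part (3) becomes $\delta^{i}M_{ll}=M_{ll}[\delta^{p-i}]$ together with $\delta^{i}M_{ll}/\delta^{i+1}M_{ll}\cong M_{X,ll}$.

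\textbf{The complex on $X$.} Since $\pi$ is finite \'etale, $\Omega^1_Y=\pi^{*}\Omega^1_X$ and $R^{>0}\pi_{*}=0$, so $M=\mathbb{H}^{1}(X,\mathcal E^{\bullet})$ with $\mathcal E:=\pi_{*}\mathcal O_Y$ and $\mathcal E^{\bullet}:=[\mathcal E\xrightarrow{\,d\,}\mathcal E\otimes_{\mathcal O_X}\Omega^1_X]$. Here $\mathcal E$ is locally free of rank $p$ over $\mathcal O_X$, an $\mathcal O_X[G]=\mathcal O_X[\delta]/(\delta^{p})$-module that is even locally free of rank one over $\mathcal O_X[G]$: this is where unramifiedness is used, via local Artin--Schreier equations $t^{p}-t=f$ with $f\in\mathcal O_X$, for which $\gamma(t)=t+1$, $dt=-df\in\Omega^1_X$, and $t^{p-1}$ generates $\mathcal E$ over $\mathcal O_X[\delta]/(\delta^{p})$. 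I will then record the structural facts about $\mathcal E$: the $\delta$-adic filtration $\mathcal E\supseteq\delta\mathcal E\supseteq\cdots\supseteq\delta^{p}\mathcal E=0$ is strict with invertible graded pieces, it coincides with the $\delta$-torsion filtration ($\delta^{j}\mathcal E=\mathcal E[\delta^{p-j}]$), $\delta^{p-1}\mathcal E=\mathcal E^{G}=\mathcal O_X$, and multiplication by $\delta^{p-1-j}$ gives a canonical isomorphism $\gr^{j}_{\delta}\mathcal E\xrightarrow{\ \sim\ }\gr^{p-1}_{\delta}\mathcal E=\mathcal O_X$; moreover $d$ preserves the filtration and the differential it induces on $\gr^{j}_{\delta}\mathcal E^{\bullet}$ is, under these identifications, the de Rham differential of $X$. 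Hence $\gr^{j}_{\delta}\mathcal E^{\bullet}\cong\Omega^{\bullet}_X$ canonically for $j=0,\dots,p-1$, so $\mathbb{H}^{\bullet}(X,\gr^{j}_{\delta}\mathcal E^{\bullet})=H^{\bullet}_{\mathrm{dR}}(X)$ with its Dieudonn\'e structure; and the perfect $G$-invariant pairing $\mathcal E\times\mathcal E\to\mathcal O_X$, $(a,b)\mapsto\tr_{\pi}(ab)$, makes $\delta^{j}\mathcal E^{\bullet}$ and $\delta^{p-j}\mathcal E^{\bullet}$ mutual annihilators, matching the Poincar\'e self-duality of $M$.

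\textbf{Passing to cohomology.} I will run the hypercohomology long exact sequences of $0\to\delta^{j+1}\mathcal E^{\bullet}\to\delta^{j}\mathcal E^{\bullet}\to\gr^{j}_{\delta}\mathcal E^{\bullet}\to0$; these are sequences of Dieudonn\'e modules because $\delta$, Frobenius and Verschiebung all act on the filtered complex. The inputs are: $Y$ geometrically connected, giving $\mathbb{H}^{0}(X,\mathcal E^{\bullet})=H^{0}_{\mathrm{dR}}(Y)=k$ and $\mathbb{H}^{2}(X,\mathcal E^{\bullet})=H^{2}_{\mathrm{dR}}(Y)=k$, of \'etale, resp.\ multiplicative, type; and Riemann--Hurwitz, $\dim_{k}M=2g_Y=p\cdot2g_X=\sum_{j=0}^{p-1}\dim_{k}\mathbb{H}^{1}(X,\gr^{j}_{\delta}\mathcal E^{\bullet})$. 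On the local-local summand every morphism into or out of $\mathbb{H}^{0}$ or $\mathbb{H}^{2}$ vanishes for reasons of type, so all connecting maps vanish; this simultaneously yields $\delta^{j}M_{ll}=\mathbb{H}^{1}(X,\delta^{j}\mathcal E^{\bullet})_{ll}=M_{ll}[\delta^{p-j}]$ and the short exact sequences $0\to\delta^{j+1}M_{ll}\to\delta^{j}M_{ll}\to M_{X,ll}\to0$, i.e.\ part (3). On the \'etale (resp.\ multiplicative) summand the same count shows the graded is as in the local-local case except for one extra copy of $H^{0}_{\mathrm{dR}}(X)_{\et}\cong\D(\Z/p\Z)$ (resp.\ $H^{2}_{\mathrm{dR}}(X)_{m}\cong\D(\mu_p)$); locating it --- at the bottom of the kernel filtration of $M_{\et}$, resp.\ the top of the image filtration of $M_{m}$ --- produces \eqref{eq:GG_Y,et,ker}--\eqref{eq:GG_Y,et,coker} and \eqref{eq:GG_Y,m,coker}--\eqref{eq:GG_Y,m,ker}, consistently with \eqref{eq:DSet}, \eqref{eq:DSm} over $\kbar$. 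Finally, $\delta\circ\pi^{*}=0$ (pullbacks of divisors are $G$-invariant), $\pi_{*}\pi^{*}=p=0$ on $J_X[p]$, and $\pi^{*}\pi_{*}=\sum_{g\in G}g$, which on $J_Y[p]$ equals $\delta^{p-1}$; combined with the graded computation these identify $\pi^{*}\colon J_X[p]_{\et}\xrightarrow{\ \sim\ }J_Y[p]_{\et}[\delta]$ with \eqref{eq:GG_X,et}, the multiplicative analogue follows by Cartier/Poincar\'e duality, and compatibility with the maps $J_X[p]\onto\Z/p\Z$, $\mu_p\into J_X[p]$ of the cover is an unwinding of Definition-Proposition~\ref{def:GG_X}.

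\textbf{Expected difficulty.} The step that will take real work is the degeneration just invoked: showing that on the local-local part the filtration spectral sequence degenerates, and that on the \'etale and multiplicative parts exactly one constituent escapes, landing in the precise spot claimed in \emph{each} of the two filtrations. I expect to pin this down by combining (i) the exact dimension count above, (ii) the elementary inclusions $\delta^{p-1}M\subseteq\im(\pi^{*})\subseteq M[\delta]$ and their Poincar\'e duals, and (iii) the fact that $\delta^{j}M$ and $M[\delta^{p-j}]$ have complementary dimensions (mutual annihilators) --- which together leave no slack. The remaining subtlety is purely one of canonicity over a possibly non-closed $k$, which is why the whole argument is phrased through the canonical object $\mathcal E=\pi_{*}\mathcal O_Y$ rather than a choice of $k[G]$-generator.
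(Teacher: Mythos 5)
Your route is genuinely different from the paper's. The paper proves the module-level statement (Proposition~\ref{prop:H^1(Y)}) by quoting Tamagawa and Nakajima for the $k[G]$-module structure of $H^0(Y,\Omega^1_Y)$ and $H^1_{dR}(Y)_m$, by constructing the Artin--Schreier class $\eta_X$, and by using the low-degree exact sequence of the Hochschild--Serre spectral sequence for $\pi$ in de Rham cohomology; the theorem then follows by applying the Dieudonn\'e functor, exactly as you also intend. You instead work entirely on $X$ with $\mathcal{E}^\bullet=[\pi_*\mathcal{O}_Y\to\pi_*\mathcal{O}_Y\otimes\Omega^1_X]$ filtered by powers of $\delta$, using that $\pi_*\mathcal{O}_Y$ is an invertible $\mathcal{O}_X[G]$-module (this is where \'etaleness enters) and that $\mathrm{gr}^j_\delta\mathcal{E}^\bullet\cong\Omega^\bullet_X$ canonically via the trace element. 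This is attractive: it re-derives the Nakajima/Tamagawa inputs and the Hochschild--Serre low-degree sequence in one stroke, stays canonical over a general perfect $k$, and would make the argument self-contained.

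Two concrete repairs are needed before the plan closes. First, your ingredient (iii) is false as stated: since $\langle\delta m,n\rangle=\langle m,\tilde\delta n\rangle$ and $\tilde\delta$, $\delta$ have the same kernel, the annihilator of $\delta^jM$ under the de Rham pairing is $M[\delta^j]$, not $M[\delta^{p-j}]$; the claim that $\delta^jM$ and $M[\delta^{p-j}]$ have complementary dimensions is equivalent to the freeness you are trying to prove, and it fails for the full $M\cong V_1^2\oplus V_p^{2g_X-2}$. Fortunately you do not need it: on the local-local part, once you know the local-local parts of $\mathbb{H}^0$ and $\mathbb{H}^2$ of all the subcomplexes vanish, factor $\delta^j\colon\mathcal{E}^\bullet\to\mathcal{E}^\bullet$ through $\delta^j\mathcal{E}^\bullet$ to see that the (injective) map $\mathbb{H}^1(X,\delta^j\mathcal{E}^\bullet)_{ll}\to M_{ll}$ has image exactly $\delta^jM_{ll}$; this is your asserted but unproved identification of the sheaf-level filtration with the $\delta$-power filtration, and it is not automatic, since $\mathbb{H}^1$ of an image subcomplex need not map onto $\delta^jM$. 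With it, $\dim_k\delta^jM_{ll}=(p-j)\dim_kH^1_{dR}(X)_{ll}$, and rank--nullity for $\delta^{p-j}$ on $M_{ll}$ gives the same dimension for $M_{ll}[\delta^{p-j}]$, so the inclusion $\delta^jM_{ll}\subseteq M_{ll}[\delta^{p-j}]$ is an equality -- no pairing and no Deuring--Shafarevich needed. Second, "because $\delta$, Frobenius and Verschiebung all act on the filtered complex" glosses a real point: $F$ and $V$ are not maps of complexes of $\mathcal{O}_X$-modules, so their compatibility with the filtration and with the identification $\mathrm{gr}^j_\delta\mathcal{E}^\bullet\cong\Omega^\bullet_X$ must be checked at the cocycle level (it holds: $x\mapsto x^p$ preserves $\delta^j\mathcal{E}$ because locally $t^p=t+f$, and the Cartier operator commutes with $\delta$), and the types of $\mathbb{H}^0$ and $\mathbb{H}^2$ of the subcomplexes themselves -- not just of $\mathrm{gr}^j$ and of $\mathcal{E}^\bullet$ -- must be established before "vanishing for reasons of type" applies (e.g. $\mathbb{H}^0(X,\delta^j\mathcal{E}^\bullet)=k$ with $F$ bijective, and its multiplicative counterpart for $\mathbb{H}^2$ via the trace pairing). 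Once these are in place, your \'etale/multiplicative bookkeeping, where the connecting map $\mathbb{H}^0(\mathrm{gr}^j)\to\mathbb{H}^1(\delta^{j+1}\mathcal{E}^\bullet)$ plays the role of cup product with $\eta_X$, does reproduce the content of parts (3)--(4) of Proposition~\ref{prop:H^1(Y)} and hence the canonical sequences \eqref{eq:GG_Y,et,ker}, \eqref{eq:GG_Y,et,coker} and their identification with \eqref{eq:GG_X,et}; but this is genuine work to be carried out, not a consequence of the dimension count alone.
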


As we will see in Section~\ref{s:H1dR}, the asymmetry between the
kernels and cokernels of $\delta$ (i.e., \eqref{eq:GG_Y,et,ker} vs
\eqref{eq:GG_Y,et,coker} and \eqref{eq:GG_Y,m,coker} vs
\eqref{eq:GG_Y,m,ker}) is significant.  Indeed, although
$J_Y[p]_{\et}[\delta]$ and $J_Y[p]_{\et}/\delta$ have the same order
and are closely related, they are not in general isomorphic.
Similarly for $J_Y[p]_{m}[\delta]$ and $J_Y[p]_{m}/\delta$.
Readers are referred to Figures~\ref{fig:1} and \ref{fig:2} in
Section~\ref{s:H1dR} for pictorial versions of
Theorems~\ref{thm:G-str-genl-k} and \ref{thm:k-point} in terms of
Dieudonn\'e modules.  Among other things, the figures show how the two
filtrations (by images and kernels of $\delta$) interact.

When $k=\kbar$, we may recover the isomorphisms~\eqref{eq:DSet} and
\eqref{eq:DSm} from parts (1) and (2) of the theorem using the fact
that the category of $p$-torsion \'etale (resp. multiplicative) group
schemes over $k$ is semi-simple with unique simple object $\Z/p\Z$
(resp.~$\mu_p$). The situation for the local-local part is much more
complicated even when $k$ is algebraically closed and will be discussed
in more detail below.

We now consider a certain freeness property of $p$-torsion group
schemes with $G$ action.

  \begin{def-lemma}\label{def:free}
    Let $\GG$ be a finite commutative group scheme over $k$ killed by
    $p$ and equipped with an action of $G=\Z/p\Z$, i.e., a group
    scheme equipped with the structure of a module over $\Fp[G]$.  We
    say $\GG$ is \emph{$G$-free} if the following equivalent
    conditions are satisfied:
    \begin{enumerate}
    \item the Dieudonn\'e module $M(\GG)$ is free over the group ring
      $k[G]$
    \item $\GG[\delta]/\delta^{p-1}\GG=0$
    \item $\GG[\delta^{p-1}]/\delta\GG=0$
    \item $\delta^{p-1}$ induces an isomorphism $\GG/\delta\isoto\GG[\delta]$
    \end{enumerate}
  \end{def-lemma}

\begin{cor}\label{cor:ll-free}
  $J_Y[p]_{ll}$ is $G$-free.
\end{cor}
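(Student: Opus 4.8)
The plan is to read off $G$-freeness directly from the structural description of $J_Y[p]_{ll}$ furnished by Theorem~\ref{thm:G-str-genl-k}(3), using one of the equivalent group-scheme characterizations of $G$-freeness in Definition-Lemma~\ref{def:free}. Concretely, I would invoke the equality $\delta^{i}J_Y[p]_{ll}=J_Y[p]_{ll}[\delta^{p-i}]$ at the extreme value $i=p-1$ (or, symmetrically, $i=1$). This gives $\delta^{p-1}J_Y[p]_{ll}=J_Y[p]_{ll}[\delta]$ as subgroup schemes of $J_Y[p]_{ll}$. Since $k[G]\cong k[\delta]/(\delta^p)$, the operator $\delta^p$ kills $J_Y[p]_{ll}$, so the inclusion $\delta^{p-1}J_Y[p]_{ll}\subseteq J_Y[p]_{ll}[\delta]$ holds automatically; Theorem~\ref{thm:G-str-genl-k}(3) upgrades it to an equality. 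Hence $J_Y[p]_{ll}[\delta]/\delta^{p-1}J_Y[p]_{ll}=0$, which is exactly condition~(2) of Definition-Lemma~\ref{def:free}, so $J_Y[p]_{ll}$ is $G$-free. (Condition~(3) follows identically from the case $i=1$, and either one suffices.)

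The one point to state carefully is that Definition-Lemma~\ref{def:free} phrases $G$-freeness primarily via freeness of the Dieudonné module $M(\GG)$ over $k[G]$, whereas the displayed argument is carried out at the level of group schemes; but bridging this is precisely the content of the equivalence of conditions (1)--(4) in that Definition-Lemma (the Dieudonné functor being exact, faithful, and compatible with the $\delta$-action), so once condition~(2) is verified there is nothing further to do.

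I therefore do not expect any real obstacle internal to this corollary: all of the substance has already been absorbed into Theorem~\ref{thm:G-str-genl-k}(3). The only thing worth emphasizing is the contrast with parts (1) and (2): for the étale and multiplicative parts the kernel/image filtrations of $\delta$ disagree by the rank-one ``defect'' pieces $\Z/p\Z$ and $\mu_p$ appearing in \eqref{eq:GG_Y,et,ker}--\eqref{eq:GG_Y,m,ker}, which obstructs $k[G]$-freeness, while for the local-local part these two $p$-step filtrations coincide after the index shift $i\leftrightarrow p-i$, and that coincidence is exactly what makes $J_Y[p]_{ll}$ genuinely free over $k[G]$.
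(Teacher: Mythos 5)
Your proof is correct and is essentially the paper's own argument: the paper likewise deduces the corollary by noting that conditions (2)--(4) of Definition-Lemma~\ref{def:free} follow immediately from the equalities $\delta^{i}J_Y[p]_{ll}=J_Y[p]_{ll}[\delta^{p-i}]$ in Theorem~\ref{thm:G-str-genl-k}(3), exactly as you do at $i=p-1$ (the paper also remarks that the result follows alternatively from Theorem~\ref{thm:k-point}). Your closing observation about the $\Z/p\Z$ and $\mu_p$ defects obstructing freeness in the \'etale and multiplicative parts is a nice, accurate aside but not needed for the corollary.
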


\begin{proof}
  Conditions (2), (3), and (4) in Definition~\ref{def:free} follow
  immediately from part (3) of Theorem~\ref{thm:G-str-genl-k}.  The
  corollary also follows from Theorem~\ref{thm:k-point} just below.
\end{proof}

There is an elegant, uniform variant of Theorem~\ref{thm:G-str-genl-k}
provided that $X$ has a $k$-rational point.

\begin{thm}\label{thm:k-point}
  Suppose that $X$ has a $k$-rational point $S$, and let
  $T=\pi^{-1}(S)$ viewed as a closed subscheme of $Y$.  Then there is
  a self-dual $BT_1$ group scheme $\HH$ equipped with the structure of
  a module over $\Fp[G]$ with the following properties:
  \begin{enumerate}
  \item $\HH$ is $G$-free in the sense of
    Definition-Lemma~\ref{def:free}.
\item There are equalities $\delta^{i}\HH=\HH[\delta^{p-i}]$ for
  $i=1,\dots,p$, as well as canonical isomorphisms
  \[      \frac{\delta^{i}\HH}{\delta^{i+1}\HH}\cong
    \frac{\HH[\delta^{p-i}]}{\HH[\delta^{p-i-1}]}\cong
    J_X[p]
        \quad\text{for $i=0,\dots,p-1$.}\]
    \item There are canonical exact sequences
      \[0\to J_Y[p]_{\et}\to\HH_{\et}\to \res_{T/S}\Z/p\Z\to\Z/p\Z\to0,\]
      and
      \[0\to \mu_p\to\res_{T/S}\mu_p\to\HH_m\to J_Y[p]_m\to0,\]
        and a canonical isomorphism
        \[\HH_{ll}\cong J_Y[p]_{ll}.\]
    \end{enumerate}
\end{thm}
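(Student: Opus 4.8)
The plan is to define $\HH$ as the $BT_1$ group scheme whose Dieudonné module is a suitable de Rham cohomology group of $Y$ attached to $T$, and to deduce all three parts from its functorial properties together with the fact that $\pi^{-1}(S)=T\to S$ is a $G$-torsor. Concretely, let
\[\mathcal{K}_Y^\bullet:=\bigl[\,\O_Y(-T)\xrightarrow{\,d\,}\Omega^1_Y(\log T)\,\bigr]\]
be the two-term complex (in degrees $0,1$) of coherent sheaves on $Y$, and set $M(\HH):=\mathbb{H}^1(Y,\mathcal{K}_Y^\bullet)$, equipped with the Frobenius, Verschiebung, Hodge filtration and $G$-action furnished by the de Rham formalism of Section~\ref{s:H1dR} ($G$ acting on $Y$ and fixing the reduced divisor $T$). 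Since $T$ is reduced, $\Omega^1_Y(\log T)=\Omega^1_Y(T)$, and Serre duality identifies $\mathcal{K}_Y^\bullet$ with its own $\Omega^1_Y[1]$-dual (as $\mathcal{H}om(\O_Y(-T),\Omega^1_Y)=\Omega^1_Y(T)$ and $\mathcal{H}om(\Omega^1_Y(T),\Omega^1_Y)=\O_Y(-T)$), which yields the self-duality of $\HH$; that $\HH$ is a $BT_1$, and that $\mathbb{H}^0(Y,\mathcal{K}_Y^\bullet)=\mathbb{H}^2(Y,\mathcal{K}_Y^\bullet)=0$, are checked by Riemann--Roch/Serre duality (e.g.\ $H^0(\O_Y(-T))=0$ and $H^1(\Omega^1_Y(T))=0$).

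\emph{Part (1).} Because $\pi$ is étale and $T=\pi^*S$, one has $\O_Y(-T)=\pi^*\O_X(-S)$ and $\Omega^1_Y(\log T)=\pi^*\Omega^1_X(\log S)$, so by the projection formula $\pi_*\mathcal{K}_Y^\bullet=\bigl[\mathcal{A}\otimes_{\O_X}\O_X(-S)\to\mathcal{A}\otimes_{\O_X}\Omega^1_X(\log S)\bigr]$ where $\mathcal{A}:=\pi_*\O_Y$. As $\pi\colon Y\to X$ is a $G$-torsor, $\mathcal{A}$ is an invertible $\O_X[G]$-module (étale-locally on $X$, $\mathcal{A}\cong\O_X[G]$), so each term of $\pi_*\mathcal{K}_Y^\bullet$ is a vector bundle on $X$ that is flat over $k[G]$. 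Hence $R\Gamma(X,\pi_*\mathcal{K}_Y^\bullet)=R\Gamma(Y,\mathcal{K}_Y^\bullet)$ is a perfect complex of $k[G]$-modules, and by the vanishing of $\mathbb{H}^0$ and $\mathbb{H}^2$ it is concentrated in degree $1$, with cohomology $M(\HH)$. Over the commutative quasi-Frobenius (self-injective Artinian) ring $k[G]\cong k[\delta]/(\delta^p)$, a module of finite projective dimension is free; hence $M(\HH)$ is a free $k[G]$-module, which is condition (1) of Definition-Lemma~\ref{def:free}.

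\emph{Part (3).} I would combine two hypercohomology long exact sequences on $Y$. The inclusion $\mathcal{K}_Y^\bullet\hookrightarrow[\O_Y\to\Omega^1_Y(\log T)]$ has cokernel $[\O_Y/\O_Y(-T)\to 0]=[\O_T\to 0]$, which (using $\mathbb{H}^0(\mathcal{K}_Y^\bullet)=0$, $\mathbb{H}^0([\O_Y\to\Omega^1_Y(\log T)])=k$, $H^1(\O_T)=0$) gives
\[0\to \O(T)/k\to M(\HH)\to H^1_{\mathrm{dR}}(Y\setminus T)\to 0,\]
where $\O(T)=H^0(T,\O_T)$ is the étale $k$-algebra of functions on $T$ and $H^1_{\mathrm{dR}}(Y\setminus T):=\mathbb{H}^1([\O_Y\to\Omega^1_Y(\log T)])$; and the standard residue exact sequence for the open curve,
\[0\to H^1_{\mathrm{dR}}(Y)\to H^1_{\mathrm{dR}}(Y\setminus T)\xrightarrow{\ \mathrm{res}\ }\O(T)^0\to 0,\]
with $\O(T)^0:=\ker(\O(T)\xrightarrow{\Sigma}k)$ and $H^1_{\mathrm{dR}}(Y)=M(J_Y[p])$. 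Thus $M(\HH)$ carries a three-step filtration with successive quotients $\O(T)/k$, $M(J_Y[p])$, $\O(T)^0$. One then identifies $\O(T)/k$ with $(\res_{T/S}\mu_p)/\mu_p$ (multiplicative — it comes from a degree-$0$ term and carries the Tate twist of the Gysin boundary) and $\O(T)^0$ with $(\res_{T/S}\Z/p\Z)^0=\ker(\res_{T/S}\Z/p\Z\xrightarrow{\Sigma}\Z/p\Z)$ (étale — from the degree-$1$ residue term), the $G$-module structures being those induced by the torsor $T\to S$; this is precisely where the hypothesis that $S$ is $k$-rational is used. Finally, over the perfect field $k$ the group $\mathrm{Ext}^1$ between group schemes of distinct étale/multiplicative/local-local type vanishes (the relevant internal $\mathcal{H}om$-sheaf, e.g.\ $\mu_p$, has trivial fppf $H^1$ over a perfect field), so the filtration splits along the canonical decomposition: $\HH_{ll}\cong J_Y[p]_{ll}$, $\HH_m$ is the extension of $J_Y[p]_m$ by $(\res_{T/S}\mu_p)/\mu_p$, and $\HH_{\et}$ is the extension of $(\res_{T/S}\Z/p\Z)^0$ by $J_Y[p]_{\et}$; rewriting these as four-term sequences yields the three assertions of (3), with the stated maps being pull-back, push-forward, residue and trace.

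\emph{Part (2).} Granting (1), $M(\HH)$ is $k[G]$-free, so $\delta^iM(\HH)=M(\HH)[\delta^{p-i}]$ for every $i$, and multiplication by $\delta^i$ induces isomorphisms $M(\HH)/\delta M(\HH)\xrightarrow{\ \sim\ }\delta^iM(\HH)/\delta^{i+1}M(\HH)$ and $M(\HH)/\delta M(\HH)\xrightarrow{\ \sim\ }M(\HH)[\delta^{p-i}]/M(\HH)[\delta^{p-i-1}]$ for $0\le i\le p-1$; translated into group schemes these are the displayed equalities and isomorphisms of part (2), once $M(\HH)/\delta M(\HH)$ is identified with $M(J_X[p])$. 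For the latter, reduce the construction modulo $\delta$: since $\pi_*\mathcal{K}_Y^\bullet$ is $k[G]$-flat and $M(\HH)$ is $k[G]$-free, base change gives $M(\HH)/\delta M(\HH)=\mathbb{H}^1(X,(\pi_*\mathcal{K}_Y^\bullet)\otimes_{k[G]}k)$; the trace map induces $\mathcal{A}\otimes_{k[G]}k\cong\O_X$, and the canonical connection on $\mathcal{A}$ descends through it to $d_X$, so $(\pi_*\mathcal{K}_Y^\bullet)\otimes_{k[G]}k\cong\mathcal{K}_X^\bullet:=[\O_X(-S)\xrightarrow{d_X}\Omega^1_X(\log S)]$; and because $S$ is a single $k$-rational point, $\O(S)/k=\O(S)^0=0$, so the two exact sequences above (now applied to $(X,S)$) collapse to $\mathbb{H}^1(X,\mathcal{K}_X^\bullet)\cong H^1_{\mathrm{dR}}(X)=M(J_X[p])$. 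All identifications being of de Rham cohomology groups, they respect Frobenius, Verschiebung, the Hodge filtration and Cartier duality, so $\HH/\delta\HH\cong J_X[p]$ canonically, and together with $G$-freeness this proves (2). The step I expect to be the main obstacle is the bookkeeping in part (3): pinning down that the two boundary group schemes are precisely $(\res_{T/S}\mu_p)/\mu_p$ and $(\res_{T/S}\Z/p\Z)^0$ with the correct Frobenius/Verschiebung (so that one is multiplicative and the other étale — the origin of the Gysin ``twist'') and with the correct $G$-action coming from the torsor $T\to S$, and then matching the resulting four-term sequences with those in (3) under the asserted canonical maps; setting up the self-duality pairing on $M(\HH)$ so that it visibly agrees with Cartier duality is a secondary point requiring care.
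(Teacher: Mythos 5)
Your construction coincides with the paper's: $M(\HH)$ is exactly the module $\NN_Y=\Hy^1\bigl(Y,\OO_Y(-T)\labeledto{d}\Omega^1_Y(T)\bigr)$ of Proposition~\ref{prop:H^1(Y,T)}, and parts (1)--(2) and the three-step filtration in part (3) are obtained from the same comparisons with $\OO_T$ and $\Omega^1_Y(T)/\Omega^1_Y$. Your route to $k[G]$-freeness (flatness of $\pi_*\mathcal{K}^\bullet_Y$ over $k[G]$ from the torsor property, perfectness of $R\Gamma$, and the quasi-Frobenius property of $k[\delta]/(\delta^p)$) is genuinely different from the paper's, which instead quotes Nakajima's theorem that $H^0(Y,\Omega^1_Y(T))$ is $k[G]$-free and uses Serre duality and the modified de Rham sequence; your argument is sound and more self-contained. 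Likewise your identification of $\HH/\delta\HH$ with $J_X[p]$ by coinvariants and the trace is an acceptable variant of the paper's identification of $\NN_Y[\delta]\cong H^1_{dR}(X)$ via Hochschild--Serre and $\pi^*$ (the two are exchanged by $\delta^{p-1}$ once freeness is known).

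The genuine error is in part (3): you have the \'etale and multiplicative boundary pieces swapped. Computing $F$ and $V$ on hypercocycles, $F(\omega_i,f_{ij})=(0,f_{ij}^p)$ and $V(\omega_i,f_{ij})=(\CC\omega_i,0)$. The submodule $\O(T)/k\subset M(\HH)$ is the image of $H^0(Y,\OO_T)$ under the connecting map $t\mapsto(df_i,\,f_i-f_j)$; on it $F$ acts by the $p$-power Frobenius on functions and $V=0$ (the Cartier operator kills exact forms), so this piece is \emph{\'etale} and equals $M\bigl(\Ker(\res_{T/S}\Z/p\Z\to\Z/p\Z)\bigr)$, not the Dieudonn\'e module of $(\res_{T/S}\mu_p)/\mu_p$. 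Dually, the quotient $\O(T)^0$, a subspace of $H^0\bigl(Y,\Omega^1_Y(T)/\Omega^1_Y\bigr)$, has $F=0$ and $V$ given by the Cartier operator (inverse Frobenius on residues), so it is \emph{multiplicative} and equals $M\bigl(\coker(\mu_p\to\res_{T/S}\mu_p)\bigr)$; the ``Gysin twist'' sits on the degree-one residue piece, not on the degree-zero function piece as you wrote. With your labels, applying the contravariant functor $M(\cdot)$ (subs of $M(\HH)$ become quotients of $\HH$) would make $J_Y[p]_\et$ a quotient of $\HH_\et$ and place $(\res_{T/S}\Z/p\Z)^0$ as a subgroup, which contradicts the asserted sequence $0\to J_Y[p]_{\et}\to\HH_{\et}\to \res_{T/S}\Z/p\Z\to\Z/p\Z\to0$; your closing sentence states the correct conclusions, but they do not follow from the identifications you gave. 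Once the two labels are interchanged and contravariance is tracked carefully, the remainder (including the splitting by type, which over the perfect field $k$ is just the canonical \'etale/multiplicative/local-local decomposition rather than an fppf $\ext^1$ computation) recovers part (3) exactly as in the paper.
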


The group scheme $\HH$ depends on the choice of $S$ in an
interesting way, see Section~\ref{ss:S-dependence}.

Theorems~\ref{thm:G-str-genl-k} and \ref{thm:k-point} identify the
minimal subquotients of $J_Y[p]$ and $\HH$ as $\Fp[G]$-modules, and
one might hope to ``reassemble'' the group schemes from this
information. However, the category of $BT_1$ group schemes is not well
behaved with respect to extensions (even when $k$ is algebraically
closed), so even taking $J_X[p]$ as known, the structure of the
repeated extensions $J_Y[p]$ and $\HH$ can be quite intricate. For
example, in Section~\ref{s:comments} we show that there can be no
Jordan-H\"older theorem for $BT_1$ group schemes, and we show that
there may be infinitely many non-isomorphic $\D_k[G]$-modules giving
rise to a fixed isomorphism class of $\D_k$-module and a fixed
isomorphism class of $k[G]$-module. Nevertheless, we are able to
deduce significant restrictions on $J_Y[p]$.

\smallskip
\subsection*{Analysis of the \'etale part of $J_Y[p]$}
We now consider freeness and related splitting questions for the
\'etale part of $J_Y[p]$.  Similar results hold for the multiplicative
part by Cartier duality, and we leave it to the reader to make them
explicit.

Note that equation~\eqref{eq:DSet} implies that when $k$ is
algebraically closed, $J_Y[p]_\et$ is the direct sum of $\Z/p\Z$ and a
$G$-free group scheme.  The following result gives criteria for
the same structural result to hold over a general $k$.
  
\begin{prop}\label{prop:splitting}\mbox{}
  \begin{enumerate}
    \item The exact sequence \eqref{eq:GG_Y,et,ker} splits if and only
      if there is an exact sequence of $k$-group schemes
      \[0\to\Z/p\Z\to J_Y[p]_\et\to\QQ\to 0\]
      where $\QQ$ is $G$-free.
    \item The exact sequence \eqref{eq:GG_Y,et,coker} splits if and only
      if there is an exact sequence of $k$-group schemes
      \[0\to\KK\to J_Y[p]_\et\to\Z/p\Z\to 0\]
      where $\KK$ is $G$-free.      
    \item The exact sequences \eqref{eq:GG_Y,et,ker} and
       \eqref{eq:GG_Y,et,coker} both split if and only if $J_Y[p]_\et$
       is the direct sum of $\Z/p\Z$ and a $G$-free group scheme.
   \end{enumerate}
\end{prop}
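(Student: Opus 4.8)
My plan is to pass to $\kbar$-points and reduce everything to module theory over the local principal ideal ring $A:=\Fp[G]\cong\Fp[\delta]/(\delta^p)$. Writing $E:=J_Y[p]_\et$, under the equivalence between $p$-torsion \'etale $k$-group schemes carrying a $G$-action and finite $A$-modules with a commuting $\gal(\kbar/k)$-action (or, if one prefers, after applying the Dieudonn\'e functor), all three parts become assertions about $V:=E(\kbar)$ and its sub- and quotient modules, with ``$G$-free'' meaning ``$A$-free'' (Definition-Lemma~\ref{def:free}). Before starting I would record two inputs from Theorem~\ref{thm:G-str-genl-k}(1): (a) under the identification $E[\delta]=J_X[p]_\et$ one has $\GG_{X,\et}=\delta^{p-1}E$, so that \eqref{eq:GG_Y,et,ker} is the sequence $0\to\delta^{p-1}E\to E[\delta]\to\Z/p\Z\to0$ and \eqref{eq:GG_Y,et,coker} is $0\to E[\delta^{p-1}]/\delta E\to E/\delta E\xrightarrow{\,\delta^{p-1}\,}\delta^{p-1}E\to0$; and (b) (from \eqref{eq:DSet}, or from (a) and a count of orders) $V\cong A/\delta\oplus(A/\delta^p)^{f_X-1}$ as an $A$-module, so in particular $E[\delta]$ and $E/\delta E$ have order $p^{f_X}$, $\delta^{p-1}E$ has order $p^{f_X-1}$, $E[\delta^{p-1}]/\delta E$ has order $p$, and $E[\delta]\cap\delta E=\delta^{p-1}E$. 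Also $G$ acts trivially on every $\Z/p\Z$ in sight (since $|\aut(\Z/p\Z)|=p-1$ is prime to $p$), so any splitting I produce is automatically $G$-equivariant and I only have to split $\gal(\kbar/k)$-equivariantly.

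For part (1): given an inclusion $\iota\colon\Z/p\Z\hookrightarrow E$ with $G$-free quotient $\QQ$, the image lands in $E[\delta]$, and if it lay inside $\GG_{X,\et}=\delta^{p-1}E$ then $\QQ=E/\iota(\Z/p\Z)$ would satisfy $\delta^{p-1}\QQ=\delta^{p-1}E/\iota(\Z/p\Z)$, of order $p^{f_X-2}$, contradicting that a free $A$-module of rank $f_X-1$ (the rank forced by $\#\QQ$) has $\#\delta^{p-1}\QQ=p^{f_X-1}$; hence $E[\delta]=\GG_{X,\et}\oplus\iota(\Z/p\Z)$, a splitting of \eqref{eq:GG_Y,et,ker}. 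Conversely, given a splitting $E[\delta]=\GG_{X,\et}\oplus Z$ with $Z\cong\Z/p\Z$, set $\QQ:=E/Z$: the connecting homomorphism of the long $\delta$-torsion exact sequence of $0\to Z\to E\to\QQ\to0$ has image in $Z\cap\delta E\subseteq Z\cap(E[\delta]\cap\delta E)=Z\cap\GG_{X,\et}=0$, so it vanishes and $\#\QQ[\delta]=p^{f_X-1}$; since $\delta^{p-1}\QQ=(\GG_{X,\et}+Z)/Z=E[\delta]/Z$ also has order $p^{f_X-1}$ and $\delta^{p-1}\QQ\subseteq\QQ[\delta]$, the two agree and $\QQ$ is $G$-free.

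Part (2) I would treat by the mirror-image computation, directly rather than via Cartier duality (which gives the multiplicative analogue instead). Given $0\to\KK\to E\to\Z/p\Z\to0$ with $\KK$ $G$-free, we have $\delta E\subseteq\KK$, and $E[\delta^{p-1}]\not\subseteq\KK$ (else $E[\delta^{p-1}]\subseteq\KK[\delta^{p-1}]=\delta\KK\subseteq\delta E$, impossible since $\#E[\delta^{p-1}]=p\cdot\#\delta E$), so $E[\delta^{p-1}]+\KK=E$ ($\Z/p\Z$ being simple); combined with $\delta\KK=\delta E$ (equal orders) this yields $E/\delta E=(E[\delta^{p-1}]/\delta E)\oplus(\KK/\delta E)$ with $\delta^{p-1}$ mapping $\KK/\delta E$ isomorphically onto $\delta^{p-1}E=\GG_{X,\et}$, i.e.\ a splitting of \eqref{eq:GG_Y,et,coker}. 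Conversely, from a splitting $E/\delta E=(E[\delta^{p-1}]/\delta E)\oplus\overline{W}$, take $\KK\subseteq E$ to be the preimage of $\overline{W}$; then $\delta^{p-1}\KK=\delta^{p-1}\overline{W}=\GG_{X,\et}$ has order $p^{f_X-1}$ while $\#\KK=p^{p(f_X-1)}$, and the only $A$-module with these invariants is $(A/\delta^p)^{f_X-1}$, so $\KK$ is $G$-free.

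For part (3), the direction $(\Rightarrow)$ is immediate: if $E=\Z/p\Z\oplus\QQ$ with $\QQ$ $G$-free then $E[\delta]=\Z/p\Z\oplus\delta^{p-1}\QQ=\Z/p\Z\oplus\GG_{X,\et}$ splits \eqref{eq:GG_Y,et,ker}, and $E/\delta E=\Z/p\Z\oplus(\QQ/\delta\QQ)$ with the $\Z/p\Z$ equal to $E[\delta^{p-1}]/\delta E$ splits \eqref{eq:GG_Y,et,coker}. Conversely, assume both split: by part (2) the splitting of \eqref{eq:GG_Y,et,coker} provides a $G$-free $\KK\subseteq E$ with $E/\KK\cong\Z/p\Z$, and since $\KK$ is $G$-free, $\KK\cap E[\delta]=\KK[\delta]=\delta^{p-1}\KK=\GG_{X,\et}$ (last equality by orders, as $\delta^{p-1}\KK\subseteq\delta^{p-1}E$ and both have order $p^{f_X-1}$); the splitting of \eqref{eq:GG_Y,et,ker} gives $Z\cong\Z/p\Z$ in $E[\delta]$ with $E[\delta]=\GG_{X,\et}\oplus Z$; hence $Z\cap\KK=Z\cap\GG_{X,\et}=0$ and $\#Z\cdot\#\KK=p\cdot(\#E/p)=\#E$, so $E=Z\oplus\KK$, which is the asserted decomposition. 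I expect the main obstacle to be the ``existence'' directions of (1) and (2) --- verifying that the naturally constructed sub- or quotient-scheme is genuinely $G$-free --- which rests on the two structural facts above, most of all on the identification $\GG_{X,\et}=\delta^{p-1}E$ (without which one cannot control $\delta^{p-1}\QQ$ or $\delta^{p-1}\KK$) and the strict inequality $\#E[\delta^{p-1}]>\#\delta E$; everything else reduces to the vanishing of a single connecting homomorphism together with routine order counts over $A=\Fp[\delta]/(\delta^p)$.
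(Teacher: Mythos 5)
Your proposal is correct and takes essentially the same route as the paper: both arguments rest on the identification of \eqref{eq:GG_Y,et,ker} and \eqref{eq:GG_Y,et,coker} with the kernel and cokernel pieces of $\delta^{p-1}$ acting on $J_Y[p]_\et$ (so that $\GG_{X,\et}=\delta^{p-1}J_Y[p]_\et$ inside $J_Y[p]_\et[\delta]$), together with the structure of $J_Y[p]_\et$ over $\kbar$ as $\Z/p\Z$ plus a free $\Fp[G]$-module. The difference is only presentational: where the paper argues qualitatively (e.g.\ the image of $\Z/p\Z$ is killed by $\delta$ and does not lie in $\delta J_Y[p]_\et$, hence the quotient is $G$-free), you verify the same points by passing to $\kbar$-points and counting orders over $\Fp[\delta]/(\delta^p)$, which is sound and amounts to the same computation.
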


We will see in Example~\ref{ex:splitting} that \eqref{eq:GG_Y,et,ker}
and \eqref{eq:GG_Y,et,coker} may or may not split, and splitting of
one does not in general imply splitting of the other; similarly for
\eqref{eq:GG_Y,m,coker} and \eqref{eq:GG_Y,m,ker}.

For a commutative $p$-torsion group scheme $\GG$ over $k$, define the
\emph{arithmetic $p$-rank of $\GG$}, denoted $\nu(G)$, by
\[p^{\nu(\GG)}=|\GG(k)|=|\GG_\et(k)|.\] Let $\nu_X=\nu(J_X[p])$ and
$\nu_Y=\nu(J_Y[p])$.

We say that $J_X[p]_\et$ (resp. $J_Y[p]_\et$) is \emph{completely
  split} over $k$ if $\nu_X=f_X$ (resp. $\nu_Y=f_Y$), or equivalently,
if $J_X[p]_\et\cong\left(\Z/p\Z\right)^{f_X}$ (resp. if
$J_Y[p]_\et\cong\left(\Z/p\Z\right)^{f_Y}$ as group schemes ignoring
the $G$ action).  It follows from Theorem~\ref{thm:G-str-genl-k} that
$J_Y[p]_\et$ is completely split over $k$ if and only if we have an
isomorphism of group schemes with $G$ action as in
equation~\eqref{eq:DSet}.

We have the following general results on the
arithmetic $p$-ranks of $X$ and $Y$.

\begin{thm}\label{thm:et-inequalities}
  \mbox{}
  \begin{enumerate}
  \item   We have $\nu_X\le \nu_Y\le p\nu_X$.\label{thm:et-inequalities:part1}
  \item If the exact sequence \eqref{eq:GG_X,et} $($equivalently
    \eqref{eq:GG_Y,et,ker}$)$ splits, then we have the stronger upper
    bound $\nu_Y-1\le p(\nu_X-1)$.\label{thm:et-inequalities:part2}
 \item If $k$ is finite or algebraically closed, then
   $\nu_X\ge1$.\label{thm:et-inequalities:part3} 
  \item If $k$ is finite, \eqref{eq:GG_Y,et,ker} is split, and
    \eqref{eq:GG_Y,et,coker} is non-split, then
    $\nu_X\ge 2$.\label{thm:et-inequalities:part4}
  \end{enumerate}
  \end{thm}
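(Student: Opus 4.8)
The plan is to reduce everything to counting $k$-points in the exact sequences supplied by Theorem~\ref{thm:G-str-genl-k}(1), combined with the general behavior of the functor $\GG\mapsto\GG(k)$ for étale group schemes over a perfect field. Recall that for a finite étale $k$-group scheme $\GG$ killed by $p$, taking $k$-points is left exact but not right exact in general; the failure of right exactness on a short exact sequence $0\to\GG'\to\GG\to\GG''\to0$ is measured by a connecting map into $H^1(\Gal(\ksep/k),\GG'(\ksep))$, which vanishes automatically when $k$ is algebraically closed and which is controlled by the structure of $\Gal(\ksep/k)$ when $k$ is finite (where it is $\widehat{\Z}$, so $H^1$ of a finite module is its coinvariants, of the same order as its invariants).

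\textbf{Part (1).} For the lower bound $\nu_X\le\nu_Y$: pullback by $\pi$ gives an injection $J_X[p]_\et\into J_Y[p]_\et$ (the isomorphism onto $J_Y[p]_\et[\delta]$ in Theorem~\ref{thm:G-str-genl-k}(1)), hence an injection on $k$-points, giving $\nu_X\le\nu_Y$. For the upper bound $\nu_Y\le p\nu_X$: filter $J_Y[p]_\et$ by the powers $\delta^i J_Y[p]_\et$; the successive quotients are, by Theorem~\ref{thm:G-str-genl-k}(1), isomorphic to $\GG_{X,\et}$ for $i=1,\dots,p-1$, while the top quotient $J_Y[p]_\et/\delta J_Y[p]_\et$ sits in the sequence \eqref{eq:GG_Y,et,coker} and the bottom piece $J_Y[p]_\et[\delta]=\delta^{p-1}J_Y[p]_\et$ sits in \eqref{eq:GG_Y,et,ker}. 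Left-exactness of $k$-points along the $\delta$-filtration gives $\nu_Y \le \nu(J_Y[p]_\et[\delta]) + (p-2)\,\nu(\GG_{X,\et}) + \nu(J_Y[p]_\et/\delta)$. Now \eqref{eq:GG_Y,et,ker} gives $\nu(J_Y[p]_\et[\delta])\le \nu(\GG_{X,\et})+1 = \nu_X$ (using \eqref{eq:GG_X,et}), and \eqref{eq:GG_Y,et,coker} gives $\nu(J_Y[p]_\et/\delta)\le 1+\nu(\GG_{X,\et}) = \nu_X$; and $\nu(\GG_{X,\et})=\nu_X-1\le\nu_X$. Summing yields $\nu_Y\le p\,\nu_X$.

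\textbf{Part (2).} If \eqref{eq:GG_X,et} splits then $\nu(J_Y[p]_\et[\delta])=\nu(\GG_{X,\et})+1$ and more importantly the splitting lets us improve the estimate at the top: writing $J_Y[p]_\et$ as an iterated extension whose graded pieces are $(p-1)$ copies of $\GG_{X,\et}$ together with the two ``extra'' copies of $\Z/p\Z$ coming from \eqref{eq:GG_Y,et,ker} and \eqref{eq:GG_Y,et,coker}, one of which (the sub $\Z/p\Z$ of $J_Y[p]_\et[\delta]$) is now a direct summand and so its one $k$-point is ``already counted''; the cleanest way to organize this is to use Proposition~\ref{prop:splitting}(1): the splitting of \eqref{eq:GG_Y,et,ker} produces an exact sequence $0\to\Z/p\Z\to J_Y[p]_\et\to\QQ\to0$ with $\QQ$ $G$-free. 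Then $\nu_Y\le 1+\nu(\QQ)$, and $G$-freeness of $\QQ$ means $M(\QQ)$ is free over $k[G]$, so $\QQ$ has a $\delta$-filtration with all $p$ graded pieces isomorphic to $\QQ/\delta$; left-exactness gives $\nu(\QQ)\le p\,\nu(\QQ/\delta)$. Finally $\QQ/\delta$ is an extension with pieces $\GG_{X,\et}$ and (from the cokernel side) $\Z/p\Z$, giving $\nu(\QQ/\delta)\le\nu_X-1+\cdots$; tracking this carefully produces $\nu(\QQ)\le p(\nu_X-1)$ and hence $\nu_Y-1\le p(\nu_X-1)$. The main obstacle is this bookkeeping: one must be careful that the ``$+1$'' from the split $\Z/p\Z$ is not double-counted against the copies of $\Z/p\Z$ inside $\QQ/\delta$, which is why phrasing it through the $G$-free quotient $\QQ$ rather than through the raw $\delta$-filtration of $J_Y[p]_\et$ is the right move.

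\textbf{Parts (3) and (4).} For (3): if $k=\kbar$ then by \eqref{eq:DSet} $J_Y[p]_\et$, and already $J_X[p]_\et$, contains $\Z/p\Z$, so $\nu_X\ge1$; if $k$ is finite, the surjection $J_X[p]_\et\onto\Z/p\Z$ of \eqref{eq:GG_X,et} is a surjection of étale group schemes over a field with Galois group $\widehat{\Z}$, and since $\Z/p\Z$ is a constant group scheme its $k$-points are all of $\Z/p\Z$; pulling back along $\Z/p\Z$, the extension of Galois modules $0\to\GG_{X,\et}(\ksep)\to J_X[p]_\et(\ksep)\to\Z/p\Z\to0$ has a Galois-fixed element mapping to $1\in\Z/p\Z$ because $H^1$ of a finite $\widehat\Z$-module injects... more directly: the obstruction to lifting $1\in(\Z/p\Z)(k)$ lies in $H^1(\widehat\Z,\GG_{X,\et}(\ksep))$, which is the module of coinvariants, a finite group; but one can instead simply note $(\Z/p\Z)(k)=\Z/p\Z$ forces $|J_X[p]_\et(k)|\ge|\,\ker|\cdot 1$... the honest argument: for finite $k$, $|\GG(k)|=|\GG(\ksep)_{\Gal}|$ equals $|\GG(\ksep)^{\Gal}|$ for any finite $\Gal$-module since Frobenius acts and $\#\ker(F-1)=\#\coker(F-1)$; applying this to the exact sequence and using that the long exact cohomology sequence then forces the middle term to surject onto $(\Z/p\Z)^{\Gal}=\Z/p\Z$, so $\nu_X\ge1$. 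For (4): when \eqref{eq:GG_Y,et,ker} splits but \eqref{eq:GG_Y,et,coker} does not, the non-splitting of \eqref{eq:GG_Y,et,coker} over a finite field $k$ means the connecting homomorphism $(\GG_{X,\et})(k)\to H^1(k,\Z/p\Z)$ associated to \eqref{eq:GG_Y,et,coker} is nonzero (if it were zero the sequence of $k$-points would be right exact, and over a finite field a right-exact sequence of $\Fp$-vector-space-valued functors on this category, where every object is a successive extension of copies of $\Z/p\Z$, forces an actual splitting of group schemes — this uses that $\ext^1_k(\Z/p\Z,\Z/p\Z)$ is detected on $k$-points over a finite field). A nonzero such map forces $\GG_{X,\et}(k)\ne0$, i.e. $\nu(\GG_{X,\et})=\nu_X-1\ge1$, hence $\nu_X\ge2$. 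The subtle point, and the main obstacle in (4), is justifying the equivalence between ``right exactness on $k$-points'' and ``splitting as group schemes'' in this étale setting over a finite field; I expect to handle it by dévissage on the length of $J_Y[p]_\et$ as an iterated extension of $\Z/p\Z$'s together with the vanishing of the relevant obstruction group, or by a direct argument with the exact sequences \eqref{eq:GG_Y,et,ker}, \eqref{eq:GG_Y,et,coker} and Proposition~\ref{prop:splitting}.
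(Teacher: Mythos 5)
Your overall dévissage strategy is the same as the paper's, but as written parts (1), (2), and (4) have real gaps. In (1), the upper bound rests on the equalities $\nu(\GG_{X,\et})+1=\nu_X$, $\nu(\GG_{X,\et})=\nu_X-1$, and $\nu(J_Y[p]_\et/\delta)\le\nu_X$. These are false in general: from \eqref{eq:GG_X,et} one only gets $\nu_X-1\le\nu(\GG_{X,\et})\le\nu_X$, and over a finite field with Frobenius unipotent and non-split on $\VV_X$ (exactly case (1b) of Theorem~\ref{thm:f_X=2}, realized in lines 4--6 of Table~\ref{UnrTable}) one has $\nu(\GG_{X,\et})=\nu_X$ and, when \eqref{eq:GG_Y,et,coker} splits, $\nu(J_Y[p]_\et/\delta)=\nu_X+1$. (Also $J_Y[p]_\et[\delta]\ne\delta^{p-1}J_Y[p]_\et$; they differ by order $p$.) Using only the true inequalities, your filtration bookkeeping gives $\nu_Y\le p\nu_X+1$, not $p\nu_X$. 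The repair is to filter by the kernels $J_Y[p]_\et[\delta^i]$: the bottom piece is canonically $J_X[p]_\et$ (Theorem~\ref{thm:G-str-genl-k}(1)), contributing exactly $\nu_X$, and the other $p-1$ graded pieces are copies of $\GG_{X,\et}\subset J_X[p]_\et$, each contributing at most $\nu_X$; this is the paper's proof, and it disposes of (2) at once, since the split hypothesis gives $\nu(\GG_{X,\et})=\nu_X-1$ and hence $\nu_Y\le\nu_X+(p-1)(\nu_X-1)=p(\nu_X-1)+1$. Your alternative route for (2) through the $G$-free quotient $\QQ$ of Proposition~\ref{prop:splitting}(1) can be made to work, but the step you defer (``tracking this carefully'') is the whole content, and your description of $\QQ/\delta$ as an extension with pieces $\GG_{X,\et}$ and $\Z/p\Z$ is wrong: an order count plus $G$-freeness shows $\QQ[\delta]\cong\QQ/\delta\cong\GG_{X,\et}$ exactly, and only with that does $\nu(\QQ)\le p\,\nu(\GG_{X,\et})=p(\nu_X-1)$ follow.

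Part (3) is correct in substance, though your assertion that the $k$-points of $J_X[p]_\et$ surject onto $(\Z/p\Z)(k)$ is false (same unipotent example); what is true and suffices is that $\#\ker(\Fr-1)=\#\coker(\Fr-1)$ on a finite module and the coinvariants of the middle surject onto $\Z/p\Z$ (equivalently, $1$ is an eigenvalue of Frobenius, which is the paper's phrasing). In (4) the entire weight rests on the equivalence ``the connecting map $\GG_{X,\et}(k)\to H^1(k,\Z/p\Z)$ vanishes $\iff$ \eqref{eq:GG_Y,et,coker} splits,'' which you state but do not prove and explicitly flag as the main obstacle. Your proposed fallback (d\'evissage on iterated extensions of copies of $\Z/p\Z$) is not available, since $\GG_{X,\et}$ need not be such an iterated extension, and the equivalence fails for general extensions of Galois modules over a finite field (take sub and quotient with the same non-trivial Frobenius eigenvalue: the connecting map is trivially zero but non-split extensions exist). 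It does hold here precisely because the submodule $\Z/p\Z$ is the trivial module: writing the class of the extension as $c\in\Hom_{\Fp}(\GG_{X,\et}(\kbar),\Fp)$ modulo the image of $t\mapsto t\circ\Fr-t$, that image is exactly the annihilator of the Frobenius invariants, so $c$ is a coboundary if and only if it kills $\GG_{X,\et}(k)$; equivalently, argue as the paper does, that if $\Fr-1$ were bijective on $\GG_{X,\et}(\kbar)$ a snake-lemma section would split \eqref{eq:GG_Y,et,coker}. With that supplied, your conclusion $\nu(\GG_{X,\et})\ge1$ and hence $\nu_X\ge2$ (using the split \eqref{eq:GG_X,et}) agrees with the paper.
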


See Example~\ref{ex:nu_X=0} for a discussion of the surprising possibility
that $\nu_X$ might be 0 if $k$ is infinite.

We also have bounds on the degrees of extensions of $k$ over which
certain splitting behaviors occur:

\begin{thm}\label{thm:et-extensions}
  \mbox{}
  \begin{enumerate}
  \item There is a finite Galois extension $k'$ of $k$ of exponent
    dividing $p$ such that the sequences \eqref{eq:GG_X,et} and
    \eqref{eq:GG_Y,et,ker} split over $k'$.  Thus \textup{(}by
    Proposition~\ref{prop:splitting}\,\textup{)}, over $k'$, $J_Y[p]_\et$ is an
    extension of $G$-free group scheme by $\Z/p\Z$.
  \item If $J_X[p]_\et$ is completely split then there is a finite
    Galois extension $k'$ over $k$ with exponent dividing $p$ such
    that $J_Y[p]_\et$ is completely split over $k'$.  
  \end{enumerate}
  \end{thm}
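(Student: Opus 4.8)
I would prove the two parts separately. \emph{For (1):} By Theorem~\ref{thm:G-str-genl-k}(1), pullback along $\pi$ identifies \eqref{eq:GG_Y,et,ker} with \eqref{eq:GG_X,et}, and by Proposition~\ref{prop:splitting}(1) the asserted description of $J_Y[p]_\et$ over $k'$ is equivalent to the splitting of \eqref{eq:GG_X,et} over $k'$; so it suffices to find a finite Galois $k'/k$ of exponent dividing $p$ over which \eqref{eq:GG_X,et} splits. Now \eqref{eq:GG_X,et} is an extension of the constant group scheme $\Z/p\Z$ by the \'etale group scheme $\GG_{X,\et}$, so it is classified by a class $\xi\in H^1(k,\GG_{X,\et})=H^1(\Gal(\kbar/k),\GG_{X,\et}(\kbar))$, and this extension splits over $\kbar$ (e.g.\ by \eqref{eq:DSet} together with Proposition~\ref{prop:splitting}), so $\xi$ becomes trivial over $\kbar$. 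The plan is then to pass first to a finite extension $k_0/k$ over which the finite \'etale group scheme $\GG_{X,\et}$ is constant---over $k_0$ the class $\xi$ becomes a homomorphism $\Gal(\kbar/k_0)\to\GG_{X,\et}(\kbar)$, which is killed by the elementary abelian $p$-extension of $k_0$ cutting out its kernel---and then to arrange the construction so that a splitting field is Galois over $k$ with group of exponent dividing $p$.

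\emph{For (2):} Suppose $J_X[p]_\et$ is completely split, i.e.\ $\Gamma:=\Gal(\kbar/k)$ acts trivially on $J_X[p]_\et(\kbar)$. By Theorem~\ref{thm:G-str-genl-k}(1) the canonical isomorphism $J_X[p]_\et\cong J_Y[p]_\et[\delta]$ is one of $k$-group schemes, so $\Gamma$ also acts trivially on $J_Y[p]_\et[\delta]$. Put $M:=J_Y[p]_\et(\kbar)$ and let $\rho\colon\Gamma\to\aut_{\Fp}(M)$ be the Galois representation. Since $G=\Gal(Y/X)$ acts on $J_Y$ through $k$-automorphisms, $\rho$ commutes with the $G$-action; in particular every $\rho(g)$ commutes with the $\Fp$-linear endomorphism $\delta$ of $M$, and $\delta^p=0$. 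Fix $g\in\Gamma$ and set $\phi_g:=\rho(g)-1$. As $\Gamma$ acts trivially on $M[\delta]=J_Y[p]_\et[\delta](\kbar)$ we have $\phi_g(M[\delta])=0$; combined with $\phi_g\delta=\delta\phi_g$ this gives, for every $i\ge1$, the inclusion $\phi_g(M[\delta^i])\subseteq M[\delta^{i-1}]$ (if $\delta^i m=0$ then $\delta^{i-1}m\in M[\delta]$, so $\delta^{i-1}\phi_g(m)=\phi_g(\delta^{i-1}m)=0$). Since $\delta^p=0$ we have $M=M[\delta^p]$, and iterating gives $\phi_g^p(M)=0$; hence $\phi_g^p=0$, and therefore $\rho(g)^p=(1+\phi_g)^p=1+\phi_g^p=1$, using that $\mathrm{char}\,k=p$ and that $1$ and $\phi_g$ commute. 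Thus $\rho(\Gamma)$ is a finite group of exponent dividing $p$. Taking $k'$ to be the fixed field of $\ker\rho$ gives a finite Galois extension with $\Gal(k'/k)\cong\rho(\Gamma)$ of exponent dividing $p$ over which $\Gal(\kbar/k')$ acts trivially on $J_Y[p]_\et$, i.e.\ $J_Y[p]_\et$ is completely split over $k'$.

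The step I expect to be the main obstacle is the exponent control in (1): although $\xi$ dies over $\kbar$ and hence over some finite Galois $L/k$, the field of definition of $\GG_{X,\et}$ together with the field cut out by a cocycle representing $\xi$ in general produce a splitting field whose Galois group over $k$ has a nontrivial prime-to-$p$ part, so the argument must bring in extra input---for instance that $\GG_{X,\et}$ is a subquotient of the self-dual $BT_1$ group scheme $J_X[p]$ and that the cover $\pi$ (equivalently the surjection $J_X[p]\onto\Z/p\Z$) is defined over $k$---in order to cut the splitting field down to one of exponent dividing $p$. Part (2), by contrast, is complete as sketched; its key inputs are the identification $J_Y[p]_\et[\delta]\cong J_X[p]_\et$ of Theorem~\ref{thm:G-str-genl-k}(1) and the relation $\delta^p=0$.
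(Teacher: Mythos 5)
Your part (2) is correct and complete, and it is essentially the paper's mechanism in coordinate-free form: the paper fixes the decomposition $\VV_Y\cong W_1\oplus W_p^{f_X-1}$ of Proposition~\ref{prop:H^1(Y)}(2), writes each Galois element as a block matrix \eqref{eq:phi-matrix} whose restriction to $\VV_Y[\delta]$ is \eqref{eq:phi-on-ker} with $a_0=1$, and then (using Lemma~\ref{lemma:1-units} for the $1$-unit block) concludes that the $p$-th power of every Galois element acts trivially on $\VV_Y$. Your observation that $\rho(g)-1$ commutes with $\delta$ and lowers the filtration $M[\delta^i]$, hence is nilpotent of index at most $p$, so $\rho(g)^p=1$, is the same idea without coordinates and is fine.

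The genuine gap is in part (1), and you have named it yourself: your route through $H^1(k,\GG_{X,\et})$ first trivializes the Galois action on $\GG_{X,\et}$ over a field $k_0$, and the splitting field you then produce is elementary abelian only over $k_0$; since $\gal(k_0/k)$ sits in $\GL_{f_X-1}(\Fp)$ it generally has a nontrivial prime-to-$p$ part, and nothing in your sketch recovers exponent dividing $p$ over $k$. The extra input you guess at (self-duality of $J_X[p]$, $k$-rationality of $J_X[p]\onto\Z/p\Z$) is not what the paper uses. Instead the paper never leaves $k$: it works with the fixed $\Fp[G]$-module structure $\VV_Y\cong W_1\oplus W_p^{f_X-1}$ and uses the computation of $\Hom$-spaces between $W_1$ and $W_p$ to see that the obstruction to splitting \eqref{eq:GG_X,et}, equivalently \eqref{eq:GG_Y,et,ker}, is carried by the lower-left entries $\beta_i\in\Hom_{\Fp[G]}(W_1,W_p)\cong\Fp$ of \eqref{eq:phi-on-ker}; it then argues by an induction on powers of $\phi$ that these entries die on an exponent-$p$ quotient of $\gal(\kbar/k)$, so the sequences split over the corresponding field $k'$, with no passage through the splitting field of $\GG_{X,\et}$ at all. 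To complete your part (1) you would have to carry out an argument of this kind, tracking the map $\phi\mapsto\beta_\phi$ on all of $\gal(\kbar/k)$; note that this map is only a crossed homomorphism, $\beta_{\phi\psi}=\beta_\phi+\overline A_\phi\beta_\psi$, so showing that it vanishes on the subgroup cutting out an exponent-$p$ quotient is exactly the delicate step — it is the computation the paper performs and the one your sketch leaves open.
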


  In particular, if $k$ is finite, the splitting behavior in the Theorem
  happens over extensions $k'/k$ of degree dividing $p$.

  A splitting not discussed in the theorem (going from sequences
  \eqref{eq:GG_X,et} and \eqref{eq:GG_Y,et,ker} being split to
  $J_X[p]_\et$ being completely split) is controlled by
  $\GL_{f_X-1}(\Fp)$, and unfortunately, the exponent of this group
  grows rapidly with $f_X$.  
  
We can give a very complete description of the group scheme
$J_Y[p]_\et$ when $k$ is finite and $f_X\le2$.

\begin{thm}\label{thm:f_X=2}
  \mbox{}
  \begin{enumerate}
    \item[(A)] If $f_X=1$, then $J_Y[p]_\et\cong J_X[p]_\et\cong\Z/p\Z$.
\item[(B)]  Suppose that $p>2$, $k$ is finite, and $f_X=2$.  Then exactly
  one of the following holds:
\begin{enumerate}
\item[(1a)] $\nu_X=\nu_Y=1$.  In this case, there is an isomorphism
  \begin{equation}\label{eq:alpha<>1}
  J_Y[p]_\et\cong \Z/p\Z  \oplus \GG   
  \end{equation}
  where $\GG$ is $G$-free of rank 1 and $\nu(\GG)=0$. There is a
  nontrivial extension $k'$ of $k$ of degree dividing $p-1$ such that
  $|\GG(k')|=p^\mu$ where $1\le \mu\le p$, and there is an extension
  $k''$ of $k$ of degree dividing $p$ such that $\GG\cong(\GG')^p$
  over $k''$ where $\GG'$ has rank 1 and $\nu(\GG')=0$.  Over $k'k''$,
  $J_Y[p]_\et$ is completely split.
\item[(1b)] $\nu_X=1$ and $\nu_Y>1$.  In this case, $\nu_Y<p+1$
  \textup{(}so $J_Y[p]_\et$ is not completely split over
  $k$\textup{)}, and $J_Y[p]_\et$ is completely split over the
  extension of $k$ of degree $p$.
\item[(2)] $\nu_X=2$.  In this case, $2\le\nu_Y\le p+1$ and
  $J_Y[p]_\et$ is completely split over an extension of degree
  dividing $p$.
\end{enumerate}
\item[(C)]  Suppose that $p=2$, $k$ is finite, and $f_X=2$.  Then exactly
  one of the following holds:
\begin{enumerate}
\item[(1)] $\nu_X=1$.  In this case, $1\le\nu_Y<3$ \textup{(}so
  $J_Y[p]_\et$ is not completely split over $k$\textup{)}, and
  $J_Y[p]_\et$ splits completely over an extension of $k$ of degree
  dividing 4.
\item[(2)] $\nu_X=2$.  In this case, $2\le\nu_Y\le3$, there is an exact
  sequence
  \[0\to\Z/p\Z\to J_Y[p]_\et\to\QQ\to0\]
  where $\nu(\QQ)\ge1$, and $J_Y[p]_\et$
  splits completely over an extension of $k$ of degree dividing $p$.
\end{enumerate}

  \end{enumerate}
\end{thm}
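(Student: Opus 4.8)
The plan is to combine the structural results of Theorem~\ref{thm:G-str-genl-k}(1) and Theorem~\ref{thm:et-inequalities} with a direct module-theoretic analysis of $\Fp[G]$-modules of small rank, exploiting that a $p$-torsion \'etale group scheme over a finite field $k$ is the same as a finite-dimensional $\Fp$-vector space with a Frobenius-semilinear automorphism, equivalently a finitely generated torsion module over the (noncommutative) polynomial ring $\Fp\{F\}$, or simply an $\Fp$-representation of $\widehat{\Z}=\gal(\kbar/k)$. When $f_X=1$, part (A) is immediate: $J_X[p]_\et\cong\Z/p\Z$ by \eqref{eq:GG_X,et} (since $\GG_{X,\et}=0$), and then \eqref{eq:GG_Y,et,ker} and \eqref{eq:GG_Y,et,coker} force $J_Y[p]_\et[\delta]=J_Y[p]_\et/\delta=\Z/p\Z$, so $J_Y[p]_\et\cong\Z/p\Z$ as well. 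For $f_X=2$ we have $f_Y=p+1$ by Deuring--Shafarevich, and $\GG_{X,\et}$ is an \'etale group scheme of order $p$, hence $\GG_{X,\et}\cong\Z/p\Z$ over $\kbar$; the arithmetic $p$-rank $\nu_X\in\{1,2\}$ according to whether $\GG_{X,\et}$ is split over $k$ (i.e.\ whether Frobenius acts trivially on $\GG_{X,\et}(\kbar)\cong\Fp$).

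First I would treat the case $\nu_X=2$ (cases (B)(2) and (C)(2)). Here $\GG_{X,\et}\cong\Z/p\Z$ over $k$, so \eqref{eq:GG_Y,et,ker} reads $0\to\Z/p\Z\to J_Y[p]_\et[\delta]\to\Z/p\Z\to0$; since any extension of $\Z/p\Z$ by $\Z/p\Z$ as \'etale $k$-group schemes with trivial $G$-action is again split over a degree-$p$ extension (the classifying group is $\Fp$ under addition, killed by $p$), and in fact after passing to the maximal such extension $J_Y[p]_\et$ becomes completely split by Theorem~\ref{thm:et-extensions}(2); the bound $\nu_Y\le f_Y=p+1$ is automatic and $\nu_Y\ge\nu_X=2$ is Theorem~\ref{thm:et-inequalities}(1). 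For $p=2$ one must additionally produce the sequence $0\to\Z/p\Z\to J_Y[p]_\et\to\QQ\to0$ with $\nu(\QQ)\ge1$: this comes from splitting off the canonical sub-$\Z/p\Z$ coming from $\pi^*$ in \eqref{eq:GG_Y,et,ker} together with the observation that the quotient $\QQ$ has a $k$-point because $\GG_{X,\et}$ does. Next, when $\nu_X=1$ and $\nu_Y>1$ (case (B)(1b), and the bulk of (C)(1)), Frobenius acts on $\GG_{X,\et}(\kbar)\cong\Fp$ by a nontrivial scalar $\alpha\in\Fp^\times$, yet $J_Y[p]_\et$ has a nonzero $k$-point beyond the tautological one; the key point is that $\delta$ commutes with Frobenius, so the filtration $\delta^i J_Y[p]_\et$ is Frobenius-stable with each graded piece (for $1\le i\le p-1$) isomorphic to $\GG_{X,\et}$, on which Frobenius is multiplication by $\alpha\ne1$; hence the only Frobenius-fixed vectors must live in the top and bottom pieces (the $\Z/p\Z$'s of \eqref{eq:GG_Y,et,ker}, \eqref{eq:GG_Y,et,coker}), forcing $\nu_Y\le 2$... wait — the correct bound is $\nu_Y<p+1$, obtained by noting that if $\nu_Y=p+1=f_Y$ then $J_Y[p]_\et$ is completely split, whence so is every Frobenius-stable subquotient, contradicting $\alpha\ne1$ on the middle graded pieces; complete splitting over the degree-$p$ extension then follows because over that extension the sequences \eqref{eq:GG_X,et}, \eqref{eq:GG_Y,et,ker} split (Theorem~\ref{thm:et-extensions}(1)) and the residual obstruction lies in a group of exponent dividing $p$.

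The real work is case (B)(1a), $\nu_X=\nu_Y=1$, $p>2$. Here the tautological $\Z/p\Z\subset J_Y[p]_\et[\delta]$ (from $\pi^*$) accounts for all $k$-points, and I claim $J_Y[p]_\et\cong\Z/p\Z\oplus\GG$ with $\GG$ $G$-free of rank $1$ and $\nu(\GG)=0$. To get the splitting: the quotient $J_Y[p]_\et/(\Z/p\Z)$ is, by Theorem~\ref{thm:G-str-genl-k}(1), a successive extension whose graded pieces are all $\cong\GG_{X,\et}$ with Frobenius acting by $\alpha\ne1$; the obstruction to splitting off $\Z/p\Z$ lives in $\ext^1$ of this quotient by $\Z/p\Z$ in the category of Frobenius-modules, and since $\alpha\ne1$ the functor $\Hom(-,\Z/p\Z)$ and its derived functors vanish on the quotient (multiplication by $\alpha-1$ is invertible), giving the splitting — this is the main obstacle, and the cleanest route is to observe that $k[G]$-module $M(J_Y[p]_\et)$ decomposes under the Frobenius-eigenspace decomposition of $M(\GG_{X,\et})$, forcing freeness of the non-$\Z/p\Z$ part. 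That $\GG$ is $G$-free of rank $1$ then follows from Theorem~\ref{thm:G-str-genl-k}(1) (all graded pieces $\cong\GG_{X,\et}$, which has $\Fp$-length $1$). For the extensions $k',k''$: $\GG$ becomes trivial as a Frobenius-module over the extension $k'$ generated by an eigenvector, of degree dividing the order of $\alpha$, hence dividing $p-1$; the value $\mu=\nu_{k'}(\GG)$ satisfies $1\le\mu\le p=\rk_{\Fp}\GG$; and over the degree-$p$ extension $k''$, the results of Section on $p$-power structure (analogue of \eqref{eq:DSet}, or direct analysis of the unipotent-Frobenius part) show $\GG$ becomes a $p$-th power $(\GG')^p$ with $\GG'$ of rank $1$ and no $k''$-points since $\nu_Y$ was $1$. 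Finally complete splitting of $J_Y[p]_\et$ over $k'k''$ follows since over $k'$ one kills $\alpha$ and over $k''$ one kills the unipotent part, and the two constructions are compatible.
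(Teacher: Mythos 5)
There is a genuine gap, and it sits at the heart of your case analysis: you repeatedly assume that $\nu_X=1$ forces Frobenius to act on $\GG_{X,\et}(\kbar)\cong\Fp$ by a scalar $\alpha\neq 1$. That implication is false. Since the quotient in \eqref{eq:GG_X,et} is the constant group $\Z/p\Z$, the condition $\nu_X=1$ splits into two genuinely different subcases: either $\alpha\neq1$, or $\alpha=1$ and the extension \eqref{eq:GG_X,et} is non-split (Frobenius unipotent and nontrivial on the two-dimensional space $J_X[p]_\et(\kbar)$). The paper's proof organizes everything around exactly this dichotomy: $\alpha\neq1$ is what produces case (B)(1a) (there both \eqref{eq:GG_Y,et,ker} and \eqref{eq:GG_Y,et,coker} split and one computes $\nu_Y=1$), while case (B)(1b) and case (C)(1) are precisely the subcase $\alpha=1$, non-split. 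Consequently your treatment of (1b)/(C)(1) is built on a false premise -- the ``middle graded pieces with $\alpha\neq1$'' argument for $\nu_Y<p+1$ cannot run, and for $p=2$ the premise $\alpha\neq1$ is vacuously impossible; the correct reason is simply that $J_X[p]_\et\cong J_Y[p]_\et[\delta]$ is not completely split when $\nu_X=1<f_X$, so neither is $J_Y[p]_\et$. Symmetrically, your (1a) argument assumes $\alpha\neq1$ without deriving it from the hypothesis $\nu_X=\nu_Y=1$: to know that (1a) really is the $\alpha\neq1$ case you must show that the unipotent non-split case has $\nu_Y>1$ when $p>2$, which the paper does by an explicit computation (writing Frobenius on $W_1\oplus W_p$ in a suitable basis and checking $\phi-1$ has rank $<p$). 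Without that step your decomposition $J_Y[p]_\et\cong\Z/p\Z\oplus\GG$ in (1a) is unsupported.

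Two secondary points. First, in (1b) the claimed complete splitting over \emph{the} degree-$p$ extension does not follow from ``split the sequences over a degree-$p$ extension, then kill a residual obstruction of exponent $p$'' -- that reasoning only yields degree dividing $p^2$; the paper instead computes $\phi^n$ explicitly (the entry $(1+2+\cdots+(n-1))\beta\alpha+a^n$) and uses Lemma~\ref{lemma:1-units} together with $p$ odd to get $\phi^p=1$ on the nose (and only degree dividing $4$ when $p=2$, which is why (C)(1) is stated differently). Second, in (1a) your statements that $\GG$ ``becomes trivial'' over $k'$ and that $\GG'$ has no $k''$-points ``since $\nu_Y$ was $1$'' are not correct as stated: over $k'$ Frobenius becomes a $1$-unit on $W_p$ (hence $1\le\mu\le p$, not necessarily $\mu=p$), and over $k''$ one must argue that Frobenius acts by the scalar $\alpha\neq1$ (via $a^p=\alpha$ from Lemma~\ref{lemma:1-units}), which is what gives both $\GG\cong(\GG')^p$ and $\nu(\GG')=0$; the value of $\nu_Y$ over $k$ by itself does not control points over extensions.
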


See the proof in Section~\ref{s:apps-etale} for  additional information on the
structure of $J_Y[p]_\et$ in each case.

\subsection*{Analysis of the local-local part of $J_Y[p]$}
For the local-local part, it seems hopeless to give a full analysis,
even when $k$ is algebraically closed.
Nevertheless, certain cases can be described rather explicitly. Recall
that the $a$-number of a finite group scheme $\JJ$ over $k$ is the
largest integer $a$ such that there is an injection
$\alpha_p^a\into\JJ$. (When $\JJ$ is local-local, the $a$-number also
has an interpretation as the number of generators and relations of the
Dieudonn\'e module of $\JJ$.) Write $a_X$ and $a_Y$ for the
$a$-numbers of $J_X[p]$ and $J_Y[p]$ respectively. Booher and Cais
\cite[\S6E]{BooherCais20} have observed that in our context,
$a_X\le a_Y\le pa_X$. We can improve and refine this in some cases.

We say that a $BT_1$ group scheme is ``superspecial'' if it is
isomorphic to a power of $E_{ss}[p]$ where $E_{ss}$ is a supersingular
elliptic curve.

\begin{thm}\label{thm:h=1-app}
  Assume that $p>2$ and that $k$ is algebraically closed.
  \begin{enumerate}
  \item If $f_X=g_X-1$ \textup{(}which implies that $a_X=1$ and
    $J_X[p]_{ll}\cong E_{ss}[p]$\textup{)}, then
    \[a_Y\in\{2,4,\dots,p-1,p\}.\]
   Moreover the local-local part
    of $J_Y[p]$ has an explicit description in terms of
    generators and relations depending only on $a_Y$.
  \item More generally, if the local-local part of $J_X[p]$ is
   superspecial and $a_Y<p$, then $a_Y$ is even.
 \item If $J_X[p]_{ll}$ is superspecial and $a_Y=pa_X$ or $pa_X-1$
   then there are few possibilities for the local-local part of
   $J_Y[p]$, and they can all be described explicitly in terms of
   generators and relations. 
  \end{enumerate}
 \end{thm}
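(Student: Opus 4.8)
The plan is to reduce everything to a computation with the Dieudonn\'e module $M:=M(J_Y[p]_{ll})$ equipped with its $\delta$-action, and then to exploit the rigidity of superspecial group schemes. By Corollary~\ref{cor:ll-free}, $M$ is free over $A:=k[\delta]/(\delta^p)$, and by part~(3) of Theorem~\ref{thm:G-str-genl-k} its $\delta$-adic associated graded is $N^{\oplus p}$, where $N:=M(J_X[p]_{ll})$; in particular $\dim_k M=p\dim_k N$ and $M/\delta M\cong N$ compatibly with $F$ and $V$. Since $\alpha_p$ has trivial \'etale and multiplicative parts, $a_Y=a(J_Y[p])=a(J_Y[p]_{ll})=\dim_k M/(FM+VM)$, and dually, using the perfect pairing on $M$ coming from the self-duality of $J_Y[p]_{ll}$ (which interchanges $F$ and $V$ and multiplies $\delta$ by a unit of $A$), one computes $(FM+VM)^{\perp}=\ker F\cap\ker V=\im F\cap\im V$, so also $a_Y=\dim_k(\ker F\cap\ker V)$. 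Finally, the superspecial hypothesis says $N\cong M_0^{\oplus a_X}$ over $k=\kbar$, where $M_0=M(E_{ss}[p])$ has a basis $e_1,e_2$ with $Fe_1=Ve_1=e_2$, $Fe_2=Ve_2=0$; in particular $F^2=V^2=0$ on $N$.

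The first concrete step is to analyse $Q:=M/(FM+VM)$ as a module over the Artinian principal ideal ring $A$. Since $F$ and $V$ commute with $\delta$, their reductions $\bar F,\bar V$ act on $M/\delta M\cong N$ and $Q/\delta Q\cong N/(\bar FN+\bar VN)$, of dimension exactly $a_X$; hence $Q\cong\bigoplus_{j=1}^{a_X}A/(\delta^{n_j})$ with $1\le n_j\le p$, which already recovers the Booher--Cais bounds $a_X\le a_Y\le pa_X$. When $a_X=1$ this says $Q$ is $\delta$-cyclic, $Q\cong A/(\delta^{a_Y})$ with $a_Y\le p$, and dually $\ker F\cap\ker V\cong A/(\delta^{a_Y})$; the lower bound $a_Y\ge1$ in part~(1) is immediate since $J_Y[p]_{ll}$ is a nonzero local-local group scheme.

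Next I would upgrade this to a normal form for $(M,F,V,\delta)$. Because $F^2=V^2=0$ on each graded piece, $F^2$ and $V^2$ raise $\delta$-order; combining this with $FV=VF=0$, $\ker F=\im V$, $\ker V=\im F$, the explicit form of $F$ and $V$ on $M/\delta M$, and self-duality, one should be able --- after a change of $A$-basis adapted simultaneously to the $\delta$-filtration and to the $F/V$ action --- to bring $M$ into an explicit shape, presenting $M(J_Y[p]_{ll})$ by generators and relations over the Dieudonn\'e ring, the only remaining discrete invariant being (when $a_X=1$) the integer $a_Y$, and in general the partition $(n_j)$ together with bounded extra data that degenerates as the $n_j$ approach $p$. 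Reading $\dim_kM/(FM+VM)$ off this presentation then yields the explicit descriptions asserted in parts~(1) and~(3). For the parity in part~(2): the self-duality of $M$, together with the $\delta$-action, forces the invariants counting $\ker F\cap\ker V$ to occur in symmetric pairs --- intuitively each ``step'' contributing to $\ker F\cap\ker V$ is matched with a dual step at the mirror-image position in the $\delta$-filtration --- and a contribution can fail to be paired only when it is pushed against the relation $\delta^p=0$, i.e.\ when a $\delta$-block attains the maximal size $p$. Hence if $a_Y<p$ then every contribution is paired and $a_Y$ is even; combined with $1\le a_Y\le p$ this gives $a_Y\in\{2,4,\dots,p-1,p\}$ in part~(1). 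In part~(3), $a_Y=pa_X$ forces every $n_j=p$ and every inequality above to be an equality, which rigidifies all the extension data and pins $M$ down; the case $a_Y=pa_X-1$ lets exactly one $n_j$ drop to $p-1$, leaving finitely many possibilities, each realized by the normal form.

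The main obstacle is the normal-form step: one must simultaneously control the two $k$-semilinear operators $F$, $V$ and the nilpotent $k$-linear operator $\delta$, and prove that the $BT_1$ axioms together with superspeciality of the graded pieces genuinely cut the module down to the claimed finite list rather than to a positive-dimensional family of isomorphism classes. The companion subtlety is making the symmetry argument behind part~(2) precise --- tracking exactly how the duality pairing interacts with the $\delta$-filtration and with $F$ and $V$, so as to pin down the sign that forces sub-maximal $\delta$-blocks to be paired, hence $a_Y$ to be even.
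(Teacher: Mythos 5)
Your set-up is sound and matches the paper's framework: passing to $M=M(J_Y[p]_{ll})$, using Corollary~\ref{cor:ll-free} and Theorem~\ref{thm:G-str-genl-k}(3) to get freeness over $A=k[\delta]/(\delta^p)$ with graded pieces $M(J_X[p]_{ll})$, identifying $a_Y$ with $\dim_k M/(FM+VM)=\dim_k(\ker F\cap\ker V)$, and decomposing $Q=M/(FM+VM)$ over $A$ to recover the Booher--Cais bounds. But the proof stops exactly where the theorem begins: the ``normal form'' step and the parity mechanism are asserted (``one should be able to bring $M$ into an explicit shape'', ``forces the invariants to occur in symmetric pairs''), not proved, and these are the entire content of parts (1)--(3). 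Concretely, three ingredients are missing. First, any normal form along these lines needs that $\ker F=\im V$ is itself $A$-free of rank $h$ (property (4) of Proposition~\ref{prop:M-props}); in the paper this is a geometric input obtained by comparing Tamagawa's and Nakajima's theorems, and the remark following that proposition indicates it is not formal from the $BT_1$ axioms plus freeness of $M$, so your argument cannot simply invoke it. Second, the paper's coordinates (Proposition~\ref{prop:coords}) require a symplectic basis adapted to $\ker F$ and an application of the Lang--Steinberg theorem to reduce the block $B$ to the identity; only after that does the classification become a question about a single matrix $D$ with ${}^t\tilde D=D$. Third, and most importantly, the parity comes from a precise algebraic fact, Lemma~\ref{lemma:tilde-inv}: the duality twists $\delta$ by the involution $\delta\mapsto\tilde\delta=-\delta/(1+\delta)$, so a nonzero $\tilde{\;}$-invariant element of $A$ (here $\det D$, whose $\delta$-valuation is $a(M)$) has \emph{even} valuation when $p>2$. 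Your heuristic that ``sub-maximal $\delta$-blocks must be paired'' never isolates this sign, and as stated it does not explain, e.g., why a block of size $p-1$ cannot be self-paired; note also that the parity statement is false for $p=2$ (see Theorem~\ref{thm:p=2}), so a correct argument must locate where $p>2$ enters, which yours does not.

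Two further points on parts (1) and (3). The ``explicit description in terms of generators and relations'' in part (1) is not a consequence of knowing $Q\cong A/(\delta^{a_Y})$; the paper computes the full canonical filtration in the normalized coordinates and identifies $M\cong M(f^{\ell+1}v^{\ell+1})^{b}\oplus M(f^{\ell}v^{\ell})^{a-b}$ with $p=\ell a+b$ (Theorem~\ref{thm:h=1}), and this computation is genuinely needed. For part (3), your invariants $(n_j)$ of $Q$ do not see the actual classifying datum: when $a_Y=ph-1$ the paper (Theorem~\ref{thm:large-a}) shows $D=\delta^{p-1}D_{p-1}$ with $D_{p-1}$ of rank one, and the isomorphism class is indexed by the integer $r=\dim_k\bigl(kv+kv^{(p^2)}+kv^{(p^4)}+\cdots\bigr)$ attached to the $p^2$-Frobenius orbit of the image line of $D_{p-1}$ -- a semilinear invariant invisible to the $A$-module structure of $Q$. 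So the plan is a reasonable reduction, but the theorem's substance (normal form via Lang--Steinberg, the $\tilde{\;}$-involution parity, and the canonical-filtration computations) still has to be supplied.
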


 Part (1) is proven as Theorem~\ref{thm:h=1} below, and the precise
 description in terms of generators and relations is given there using
 Dieudonn\'e modules.  Part (2) and other features of the case where
 $J_X[p]_{ll}$ is superspecial are given in
 Theorem~\ref{thm:more-a-parity}, and part (3) is proven as
 Theorem~\ref{thm:large-a}.

 When $p=2$, we can completely analyze the possibilities for
 $J_Y[p]_{ll}$ when $J_X[p]_{ll}$ is superspecial. They can be stated
 in terms of Ekedahl--Oort structures or multisets of words on the
 alphabet $\{f,v\}$. (See \cite{Oort01} or \cite{PriesUlmer21} for
 background.)

\begin{thm}\label{thm:J_X-superspecial}
  Suppose that $p=2$ and that $J_X[p]_{ll}$ is superspecial. Then the
  Ekedahl--Oort structure of $J_Y[p]_{ll}$ starts with $h$ zeroes,
  i.e., it has the form $[0,0,\dots,0,\psi_{h+1},\dots,\psi_{ph}]$.
The multiset of words associated to $J_Y[p]_{ll}$ consists of $fv$ with
even multiplicity and of words obtained by concatenating words of the
form $f^2(vf)^{e_1}v^2(fv)^{e_2}$ with $e_1,e_2\ge0$.
\end{thm}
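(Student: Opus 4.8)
The plan is to translate the statement into semilinear algebra over the Dieudonn\'e ring and carry out a normal-form analysis. The essential point is that the hypothesis on $J_X[p]_{ll}$ makes $F$ and $V$ nearly square-zero on $M:=M(J_Y[p]_{ll})$, and that freeness over $k[G]$ together with self-duality then forces the words to alternate between the two letters in blocks.

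Since the multiset of words and the Ekedahl--Oort type are geometric invariants, I may assume $k=\Fbar_2$. Put $\bar M:=M(J_X[p]_{ll})$ and $R:=k[\delta]/(\delta^2)$. By Theorem~\ref{thm:G-str-genl-k}(3), $M$ is free of rank $2h$ over $R$ and $\delta$ induces an isomorphism $M/\delta M\isoto\delta M$ of Dieudonn\'e modules, both identified with $\bar M$ (note $\sigma$ fixes $\delta$, as $\gamma$ commutes with Frobenius). Superspeciality of $J_X[p]_{ll}$ gives a basis $\bar x_1,\dots,\bar x_h,\bar y_1,\dots,\bar y_h$ of $\bar M$ with $F\bar x_i=V\bar x_i=\bar y_i$, $F\bar y_i=V\bar y_i=0$; so $F^2=V^2=0$ on $\bar M$, and $\bar W:=\operatorname{im}F=\operatorname{im}V=\ker F=\ker V$ has dimension $h$. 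Lift to an $R$-basis $x_i,y_i$ of $M$ over the $\bar x_i,\bar y_i$ and write $Fx_i=y_i+\delta\sum_j(a_{ij}x_j+b_{ij}y_j)$, $Vx_i=y_i+\delta\sum_j(c_{ij}x_j+d_{ij}y_j)$. Then $FV=VF=0$ forces $Fy_i=\delta\sum_j c_{ij}^\sigma y_j$ and $Vy_i=\delta\sum_j a_{ij}^{\sigma^{-1}}y_j$, and a short computation gives $F^2x_i=\delta\sum_j(a_{ij}+c_{ij})^\sigma y_j$ and $V^2x_i=\delta\sum_j(a_{ij}+c_{ij})^{\sigma^{-1}}y_j$. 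In particular $F^3=V^3=0$ on $M$, and $F^2M,V^2M\subseteq\langle\delta y_1,\dots,\delta y_h\rangle$ with common dimension $r:=\operatorname{rank}(A+C)$, where $A=(a_{ij})$, $C=(c_{ij})$.

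By the word classification of $BT_1$ group schemes over $\Fbar_p$ (\cite{Oort01}, \cite{PriesUlmer21}) and $F^3=V^3=0$, write $M\cong\bigoplus_w\Lambda_w^{\oplus n_w}$ with every $w$ having all runs of length $\le 2$; the above already gives $\sum_w n_w\,\#\{ff\text{-runs in }w\}=\sum_w n_w\,\#\{vv\text{-runs in }w\}=r$. The heart of the proof is to improve this to: $n_w=0$ unless $w=fv$ or $w$ is a cyclic concatenation of blocks $f^2(vf)^{e_1}v^2(fv)^{e_2}$, and $n_{fv}$ is even. Two inputs remain. First, self-duality: the canonical principal polarization of $J_Y$ is $\gamma$-equivariant, so the pairing on $M$ makes $\delta M=M[\delta]$ Lagrangian and $n_w=n_{w^*}$ for the dual word. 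Second, and crucially, $M/\delta M\cong\bar M$ must be superspecial of $a$-number exactly $h$; since $F^2$ and $V^2$ are both governed by the matrix $A+C$ up to Frobenius twist, every ``$F$-merge'' of two $fv$-strings is tied to a ``$V$-merge'', and analyzing $\delta$ through the well-understood homomorphism spaces $\operatorname{Hom}_{F,V}(\Lambda_w,\Lambda_{w'})$ between word modules — choosing a basis that puts $A+C$ and $\delta$ in normal form simultaneously, and inducting on $h$ — one shows that the only configurations compatible with $\delta M=M[\delta]$ and $M/\delta M$ superspecial are exactly the $fv$-pairs and the block concatenations, in which the $ff$- and $vv$-runs strictly alternate around each word. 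This book-keeping, controlling how $\delta$ may glue string modules while keeping the quotient superspecial, is the main obstacle; the rest is routine semilinear algebra.

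For the Ekedahl--Oort assertion: since the $ff$- and $vv$-runs alternate in each word and the total word length is $\dim M=4h$, the number of blocks satisfies $r\le h$, so $a_Y=2h-r\ge h$; and the Ekedahl--Oort type of any $BT_1$ group scheme begins with at least $a$ zeros, where $a$ is its $a$-number (the maximal $\alpha_p^a$ sits at the bottom of the canonical filtration). Hence the Ekedahl--Oort type of $J_Y[p]_{ll}$ has the asserted form $[0,\dots,0,\psi_{h+1},\dots,\psi_{ph}]$. One can also read this off the word decomposition via the dictionary of \cite{PriesUlmer21}, or deduce $a_Y\ge h$ from $a_X\le a_Y$ (\cite[\S6E]{BooherCais20}) and $a_X=h$.
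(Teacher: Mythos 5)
Your reduction to semilinear algebra is sound as far as it goes: the coordinates over $R=k[\delta]/(\delta^2)$, the computation forcing $F^3=V^3=0$, the identity $\dim_k F^2M=\dim_k V^2M=r$, and the resulting bound $a(M)=2h-r\ge h$ all check out, and they parallel the setup the paper makes in Proposition~\ref{prop:coords}. But the actual content of the theorem is exactly the step you defer: ruling out every cyclic word with runs of length $\le 2$ except $fv$ and concatenations of blocks $f^2(vf)^{e_1}v^2(fv)^{e_2}$, and proving that $fv$ occurs with even multiplicity. Your paragraph beginning ``The heart of the proof'' asserts that a simultaneous normal form for $A+C$ and $\delta$, plus an induction on $h$, ``shows'' this, but no normal form, no induction step, and no argument excluding, say, a word in which two $ff$-runs occur with no intervening $vv$-run is actually given; nothing you do prove (self-duality $n_w=n_{w^*}$, superspeciality of $M/\delta M$, the common rank $r$) implies the alternation by itself. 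The paper's proof of Theorem~\ref{thm:p=2}(1) supplies precisely this missing content: in the coordinates of Proposition~\ref{prop:coords} it computes $(vf)^\infty M$, $(fv)^\infty M$, $(vf)^\infty 0$, $(fv)^\infty 0$ in terms of $K=\bigcap_i\Ker D_1^{(p^{2i})}$ and $I=\sum_i\im D_1^{(p^{2i})}$, obtaining the $fv$-multiplicity $2\dim_kK$ (hence even), splits off an explicit summand isomorphic to $M(fv)^{2e}$, and then uses the $u_{1,1}$-number formula of \cite{PriesUlmer21correction} to show that the remaining words must attain the bound $u(M(w))+u(M(w)^\vee)\le\tfrac14\dim_k\left(M(w)\oplus M(w)^\vee\right)$, which happens exactly when every segment has the form $(vf)^{e_2}v^2(fv)^{e_1}f^2$. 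Without this argument (or a substitute for it), the main assertion is unproven.

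The Ekedahl--Oort part also cannot be closed the way you propose: it is false that the elementary sequence of a $BT_1$ module begins with at least $a$ zeroes, where $a$ is its $a$-number, even in the local-local case. For instance, the module with generators $e_1,e_2$ and relations $F^2e_1=V^2e_2$, $Fe_2=Ve_1$ (one of the two length-six cyclic words with two $f$-runs) has $a$-number $2$ but elementary sequence $[0,1,1]$, with a single leading zero; likewise the modules in Theorem~\ref{thm:large-a}(2) have $a$-number $ph-1$ but only $ph-r$ leading zeroes. So $a_Y\ge h$ alone does not yield the displayed form of the Ekedahl--Oort structure. The paper instead extracts the $h$ leading zeroes from the word structure (Theorem~\ref{thm:p=2}(3)): among the basis vectors indexed by rotations of the words, exactly $h+d$ are $\le fv$ in the word order and all of them end in $v$, so the canonical-filtration member of dimension $h+d$ is killed by $F$. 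Your fallback of ``reading it off the word decomposition'' would indeed work, but only after the classification of the words --- which is the very step that is missing.
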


The theorem reduces the number of possibilities for $J_X[p]_{ll}$ from
$2^{2h-1}$ to $2^{h}$.  It will be proven in Theorem~\ref{thm:p=2} via
a complete analysis of Dieudonn\'e modules with the properties enjoyed
by $J_Y[p]_{ll}$ in this context.

\subsection*{Examples} 
Throughout the paper, we report on numerous examples and counterexamples
computed in {\sc Magma} using code extending \cite{Weir}.

\subsection*{Outline of the paper}
We now describe the main outlines of the paper.  By a theorem of Oda,
the Dieudonn\'e module of $J_Y[p]$ is isomorphic to the de Rham
cohomology $H^1_{dR}(Y)$, so we are lead to study the action of $G$ on
this and related cohomology groups.  After discussing preliminaries on
Dieudonn\'e modules and $k[G]$-modules in Sections~\ref{s:Dieu} and
\ref{s:G-mods}, we will prove crucial results of Chevalley-Weil type
on the $G$-module structure of various flat and coherent cohomology
groups in Section~\ref{s:H1dR} and deduce the Dieudonn\'e module
version of Theorems~\ref{thm:G-str-genl-k} and \ref{thm:k-point}.  In
particular, we show that $H^1_{dR}(Y)$ is close to being a free module
over the group ring $k[G]$, and we control the Dieudonn\'e structure
of the ``errors''.  See Propositions~\ref{prop:H^1(Y)} and
\ref{prop:H^1(Y,T)} for the precise statements.  The translation from
modules to groups is given in Section~\ref{s:groups}.

In Section~\ref{s:comments}, we review some of the difficulties in
recovering $J_Y[p]$ from its associated graded as given in
Theorem~\ref{thm:G-str-genl-k}.

Our results on the \'etale part of $J_Y[p]$
(Proposition~\ref{prop:splitting} and
Theorems~\ref{thm:et-inequalities}, \ref{thm:et-extensions}, and
\ref{thm:f_X=2}) are proven in Section~\ref{s:apps-etale}, mostly
using the correspondence between \'etale group schemes and Galois
representations. In Section~\ref{s:apps-ll}, we axiomatize
properties of the Dieudonn\'e module of the local-local part of $J_Y[p]$,
introduce a useful set of coordinates, and then prove the results underlying
Theorems~\ref{thm:h=1-app}, and \ref{thm:J_X-superspecial}.

\bigskip

\subsection*{Standing notation} We fix the following notation and
hypotheses for the rest of the paper: $k$ is a perfect field of
characteristic $p>0$ with algebraic closure $\kbar$; $X$ is a smooth,
proper, geometrically irreducible curve of genus $g_X$ over $k$;
$\pi:Y\to X$ is an unramified Galois covering with group $\Z/p\Z$ such
that $Y$ has genus $g_Y$ and is geometrically irreducible; and we fix an
isomorphism $G:=\gal(Y/X)\cong\Z/p\Z$.

\subsection*{Acknowledgements} 
The authors thank Klaus Lux for clarifying some subtleties of modular
representation theory.  BC was partially supported by NSF Grants DMS-2302072 and DMS-1902005. DU was
partially supported by Simons collaboration grant 713699.  DU also
thanks the Max Planck Institute for Mathematics in Bonn for its
support during the latter stages of the preparation of this article.

\section{Dieudonn\'e theory}\label{s:Dieu}
We will use Dieudonn\'e theory to study $J_X[p]$ and
$J_Y[p]$.  We refer to \cite{Fontaine77} for the basic facts recalled below.

Let $\Dk$ be the associative $k$-algebra generated by symbols $F$ and
$V$ with relations
\[FV=VF=0,\qquad F\alpha=\alpha^pF,\and \alpha V=V\alpha^p\]
for all $\alpha\in k$.

A \emph{$p$-group scheme over $k$} is by definition a finite
commutative group scheme over $k$ which is annihilated by $p$.

Recall that Dieudonn\'e theory gives a contravariant equivalence
between the category of $p$-group schemes over $k$ and the category of
left $\Dk$-modules of finite length.  If $G$ is a $p$-group scheme
over $k$, we write $M(G)$ for the corresponding $\Dk$-module.

By definition, a $\Dk$-module $M$ is \emph{self-dual} if it admits a
non-degenerate pairing of $\Dk$-modules, i.e., a non-degenerate
$k$-bilinear pairing $\<\cdot,\cdot\>$ with the properties that
\begin{equation}\label{eq:D-module-pairing}
\<Fx,y\>=\<x,Vy\>^p\and\<Vx,y\>=\<x,Fy\>^{1/p}  
\end{equation}
for all $x,y\in M$.  By definition, a $\Dk$-module $M$ is a
\emph{$BT_1$ module} if $\Ker(F)=\im(V)$ (or equivalently, if
$\im(F)=\Ker(V)$).

By definition, a $p$-group scheme $G$ over $k$ is \emph{self-dual}
(resp.~a \emph{$BT_1$ group scheme}) if $M(G)$ is self-dual (resp.~a
$BT_1$ module).

Any finite-dimensional $\D_k$-module $N$ decomposes as
\begin{equation}\label{eq:et-m-ll}
N\cong N_{\et}\oplus N_m\oplus N_{ll}  
\end{equation}
where $N_{\et}$ is \'etale ($F$ is bijective and $V=0$), $N_m$ is
multiplicative ($F=0$ and $V$ is bijective), and $N_{ll}$ is
``local-local'' ($F$ and $V$ are nilpotent).  (Choose a sufficiently
large integer $a$ and set $N_{\et}=\im F^a$, $N_m=\im V^a$, and
$N_{ll}=\Ker F^a\cap \Ker V^a$.)

The assignments $N\rightsquigarrow N_{\et}, N_m, N_{ll}$ are exact
functors on the category of $\D_k$-modules.  We denote the
corresponding functors on $p$-torsion group schemes by
$\GG\rightsquigarrow\GG_{\et},\GG_m$ and $\GG_{ll}$.

If $N$ is a $\D_k[G]$-module, the decomposition is also respected by
the $G$ action since $G$ commutes with $F$ and $V$.  Also, $N$ is
self-dual if and only if $N_{\et}$ is dual to $N_m$ and $N_{ll}$ is
self-dual; and $N$ is a $BT_1$ module if and only if $N_{ll}$ is a
$BT_1$ module.

A theorem of Oda \cite[Cor.~5.11]{Oda69} says that for a smooth,
proper, irreducible curve $Z$ over $k$ with Jacobian $J_Z$, the
$p$-torsion subgroup $J_Z[p]$ is a self-dual $BT_1$ group scheme, and
$M(J_Z[p])\cong H^1_{dR}(Z)$ where $H^1_{dR}(Z)$ is equipped with a
natural $\Dk$-module structure.\footnote{Oda's result requires that
  $Z$ have a $k$-rational point.  This is of course no restriction
  when $k$ is algebraically closed.  When $k$ is only assumed to be
  perfect, \cite[Prop.~5.4]{Cais10} shows that Oda's result continues
  to hold even without a rational point.}  We will use this result to
prove the Theorems~\ref{thm:G-str-genl-k} and \ref{thm:k-point} as
statements about $H^1_{dR}(X)$ and $H^1_{dR}(Y)$ and related
Dieudonn\'e modules.

\section{$G$-modules}\label{s:G-mods}
Consider the group rings $k[G]$ or $\Fp[G]$ where as before
$G=\gal(Y/X)$ and we have fixed an isomorphism $G\cong\Z/p\Z$.
Let $\gamma\in G$ be the element corresponding to $1\in\Z/p\Z$.  Then
\[k[G]\cong k[\gamma]/(\gamma^p-1)\cong k[\delta]/(\delta^p)\]
where $\delta:=\gamma-1$.  Note that
\[\delta^{p-1}=\gamma^{p-1}+\cdots+1\]
is the trace element of $k[G]$.

It is easily checked that up to isomorphism the indecomposable
$k[G]$-modules are
\[V_i:=k[\delta]/(\delta^i)\quad\text{for $i=1,\dots,p$.}\] (See,
e.g., \cite[Lemma~64.2]{CurtisReinerRTFGAA}).  By the Krull--Schmidt
theorem, every finitely generated $k[G]$-module is (non-canonically)
isomorphic to a direct sum of indecomposable modules.

\begin{lemma}\label{lemma:G-free}
Let $M$ be a finitely generated  $k[G]$-module.  Then the following
conditions are equivalent:
\begin{enumerate}
\item $M$ is free over $k[G]$,
\item $M[\delta]=\delta^{p-1}M$,
\item $M[\delta^{p-1}]=\delta M$,
\item $\delta^{p-1}$ induces an isomorphism $M/\delta M\cong M[\delta]$.
\end{enumerate}
\end{lemma}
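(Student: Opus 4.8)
The plan is to grind the whole statement through the Krull--Schmidt decomposition $M\cong\bigoplus_{i=1}^{p}V_i^{\oplus n_i}$ recorded just above the lemma, with $V_i=k[\delta]/(\delta^i)$ and $V_p=k[G]$. Condition (1) is then precisely the assertion that $n_1=\cdots=n_{p-1}=0$, and I will show that each of (2), (3), (4) is equivalent to $\sum_{i<p}n_i=0$, hence to (1) since the $n_i$ are nonnegative. Two inclusions hold for any $k[G]$-module $M$: since $\delta\cdot\delta^{p-1}=\delta^p=0$ we have $\delta^{p-1}M\subseteq M[\delta]$ and $\delta M\subseteq M[\delta^{p-1}]$. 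Moreover multiplication by $\delta^{p-1}$ sends $M$ into $M[\delta]$ and annihilates $\delta M$ (again as $\delta^p=0$), so it descends to a canonical $k$-linear map $\overline{\delta^{p-1}}\colon M/\delta M\to M[\delta]$ with image $\delta^{p-1}M$; this is the map occurring in (4), and it is an isomorphism if and only if it is surjective \emph{and} $\dim_k M/\delta M=\dim_k M[\delta]$.

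Next I would do the bookkeeping on a single indecomposable $V_i$ (with $k$-basis $1,\delta,\dots,\delta^{i-1}$): one checks directly that $\dim_k V_i[\delta]=1$, $\dim_k(V_i/\delta V_i)=1$, $\dim_k\delta V_i=i-1$, that $\delta^{p-1}V_i$ equals $k\delta^{p-1}$ if $i=p$ and is $0$ otherwise, and that $V_i[\delta^{p-1}]$ is all of $V_i$ when $i\le p-1$ and equals $\delta V_p$ when $i=p$. Summing over the decomposition gives $\dim_k M[\delta]=\sum_i n_i=\dim_k M/\delta M$, $\dim_k\delta^{p-1}M=n_p$, $\dim_k\delta M=\sum_i(i-1)n_i$, and $\dim_k M[\delta^{p-1}]=\sum_{i<p}i\,n_i+(p-1)n_p$. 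Comparing dimensions across the two inclusions above then yields: (2) $\iff$ $\sum_i n_i=n_p$; (3) $\iff$ $\sum_{i<p}i\,n_i=\sum_{i<p}(i-1)n_i$; and, using the equality $\dim_k M/\delta M=\dim_k M[\delta]$ together with the last sentence of the previous paragraph, (4) $\iff$ (2). Each of these equalities simplifies to $\sum_{i<p}n_i=0$, which finishes the proof.

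I do not expect a genuine obstacle here: the real content is the Krull--Schmidt classification (already invoked) plus five routine dimension counts, and the only points needing a little care are that $\overline{\delta^{p-1}}$ is well defined (so that (4) even makes sense) and the numerical coincidence $\dim_k M/\delta M=\dim_k M[\delta]$ that upgrades "surjective" to "isomorphism" in (4). For context I would also remark that, since $\delta^{p-1}$ is the norm element of $k[G]$, conditions (2) and (3) assert the vanishing of the Tate cohomology groups $\widehat H^{0}(G,M)$ and $\widehat H^{-1}(G,M)$ respectively, while (1) is cohomological triviality of $M$; the lemma is thus the (elementary, over the Artinian ring $k[\delta]/(\delta^p)$) fact that for the cyclic $p$-group $G$ a single Tate cohomology group already detects freeness.
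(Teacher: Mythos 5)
Your proof is correct and is essentially the paper's own argument: the paper disposes of the lemma by citing the classification of indecomposable $k[G]$-modules (the $V_i$, via Krull--Schmidt) plus "a straightforward calculation," and your dimension counts on each $V_i$ are exactly that calculation made explicit. The Tate-cohomology remark is a nice aside but not part of the paper's reasoning.
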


\begin{proof}
  This follows immediately from the classification of indecomposable
  $k[G]$-modules and a straightforward calculation.
\end{proof}

Equivalence of the conditions in Definition-Lemma~\ref{def:free}
follows from Lemma~\ref{lemma:G-free} by applying the Dieudonn\'e
functor.

Define the dual of $V_i$ as $V_i^\vee=\Hom_k(V_i,k)$ with
action $(\gamma\phi)(v)=\phi(\gamma^{-1}v)$ for $\phi\in V_i^\vee$ and $v\in V_i$.
Then $V_i^\vee\cong V_i$ (non-canonically) as $k[G]$-modules.

A non-degenerate, bilinear form $\<\cdot,\cdot\>$ on $M$ such that
$\<\gamma m,\gamma n\>=\<m,n\>$ for all $m,n\in M$ induces an
isomorphism $M\cong M^\vee$ of $k[G]$-modules.

Defining $\tilde\delta:=\gamma^{-1}-1=-\gamma^{-1}\delta$,  we have
\begin{equation*}
  \<m,\delta n\>=\<m,\gamma n\>-\<m,n\>
  =\<\gamma^{-1}m,n\>-\<m,n\>=\<\tilde\delta m,n\>.  
\end{equation*}
for all $m,n\in M$.  Note as well that $\delta$ and $\tilde\delta$
have the same image and kernel on any $k[G]$-module.

Parallel definitions and results hold for $\Fp[G]$-modules.  We write
$W_j$ for the module $\Fp[\delta]/(\delta^j)$ over
$\Fp[G]\cong\Fp[\delta]/(\delta^p)$.

\section{de Rham cohomology as a $\Dk[G]$-module}\label{s:H1dR}
Readers are assumed to be familiar with the flat, coherent, and de
Rham cohomology of curves over perfect fields, and in particular with
the semi-linear endomorphisms $F$ and $V$ of the de Rham cohomology of
a curve.  We recommend \cite{MilneEC}, \cite{MumfordOdaAG2}, and
\cite{Oda69} as basic references.

Suppose that $Z$ is a smooth, proper, irreducible curve over $k$.
Then we have coherent cohomology groups $H^s(Z,\OO_Z)$ and
$H^s(Z,\Omega^1_Z)$, as well as de Rham cohomology groups
$H^s_{dR}(Z)$.  These are finite-dimensional $k$ vector spaces,
and there is an exact sequence
\begin{equation}\label{eq:dR-seq}
0\to H^0(Z,\Omega^1_Z)\to H^1_{dR}(Z)\to H^1(Z,\OO_Z)\to0.  
\end{equation}

There is a cup product on $H^1_{dR}(Z)$ which induces a perfect
alternating pairing
\[H^1_{dR}(Z)\times H^1_{dR}(Z)\to H^2_{dR}(Z)=k\]
denoted $\<\cdot,\cdot\>_Z$. The subspace
$H^0(Z,\Omega^1_Z)$ is isotropic, and the pairing restricts to the
(perfect) Serre duality pairing
\[H^0(Z,\Omega^1_Z)\times H^1(Z,\OO_Z)\to k.\]

There are also semi-linear operators $F$ and $V$ on $H^s_{dR}(Z)$ making
it into a $\Dk$-module.  Explicitly,
\[H^0_{dR}(Z)\cong k,\text{ with }F\alpha=\alpha^p\text{ and
  }V\alpha=0,\]
\[H^2_{dR}(Z)\cong k,\text{ with }F\alpha=0\text{ and
  }V\alpha=\alpha^{1/p},\]
in other words
\[H^0_{dR}(Z)\cong M(\Z/p\Z)\and H^2_{dR}(Z)\cong M(\mu_p).\] 
If $(\omega_i,f_{ij})$ is a hypercocycle for
an affine open cover $\{U_i\}$ of $Z$ representing a class
$c\in H^1_{dR}(Z)$, then $Fc$ and $Vc$ are represented by
\begin{equation}\label{eq:D-deR}
(0,f_{ij}^p)\and (\CC\omega_i,0)  
\end{equation}
respectively, where $\CC$ is the Cartier operator.  See \cite{Oda69}
for more details.

We have
\[\im\left(V:H^1_{dR}(Z)\to H^1_{dR}(Z)\right)
  =\Ker\left(F:H^1_{dR}(Z)\to H^1_{dR}(Z)\right)=H^0(Z,\Omega^1),\]
so $H^1_{dR}(Z)$ is a $BT_1$ module.  The pairing is compatible with
the $\Dk$-module structure in the sense of
equation~\eqref{eq:D-module-pairing}, so $H^1_{dR}(Z)$ is a self-dual
$BT_1$ module.

If $\pi$ is a finite surjective map of smooth, projective curves, we
have maps $\pi^*$ and $\pi_*$ on de Rham cohomology which are
compatible with the $\Dk$-module structures.  Also, if $\pi$ is a
Galois cover, $\pi^*\pi_*$ is the trace map.  Applied to $\pi:Y\to X$,
this means
\begin{equation}\label{eq:trace}
  \pi^*\pi_*=1+\gamma+\cdots+\gamma^{p-1}=\delta^{p-1}
\end{equation}
as endomorphisms of de Rham cohomology. 

With data $\pi:Y\to X$, $G=\gal(Y/X)\cong\Z/p\Z$ as usual, we will
prove two results on the cohomology of $X$ and $Y$
(Propositions~\ref{prop:H^1(Y)} and \ref{prop:H^1(Y,T)}) which will
yield Theorems~\ref{thm:G-str-genl-k} and \ref{thm:k-point}.

Recall that the $p$-rank of a curve $Z$ is by definition the integer $f_Z$
such that $J_Z[p](\kbar)\cong(\Z/p\Z)^{f_Z}$.  It is also equal to the
dimension over $k$ of $H^1_{dR}(Z)_\et$.

\begin{prop}\label{prop:H^1(Y)}\mbox{}
\begin{enumerate}
\item There are canonical homomorphisms of $\Dk$-modules
  \[M(\Z/p\Z)\into H^1_{dR}(X)_{\et}\into H^1_{dR}(X)\and
    H^1_{dR}(X)\onto H^1_{dR}(X)_m\onto M(\mu_p)\]
  which are exchanged by Cartier duality.  The Dieudonn\'e module of
  the group $\GG_X$ in Definition~\ref{def:GG_X} is
  \[\MM_X:=\frac{\Ker\left(H^1_{dR}(X)\onto M(\mu_p)\right)}
    {\im\left(M(\Z/p\Z)\into H^1_{dR}(X)\right)}.\]
\item There are \textup{(}non-canonical\textup{)} isomorphisms of
  $k[G]$-modules
  \begin{align*}
    H^1_{dR}(Y)_\et&\cong V_1\oplus V_p^{f_X-1},\\
    H^1_{dR}(Y)_m&\cong V_1\oplus V_p^{f_X-1},\\
    H^1_{dR}(Y)_{ll}&\cong V_p^{2h_X},\\
        \noalign{and}
    H^1_{dR}(Y)&\cong V_1^2\oplus V_p^{2g_X-2},
  \end{align*}
  where $h_X=g_X-f_X$.
\item $\pi^*$ induces isomorphisms of $\Dk$-modules
\[H^1_{dR}(X)_m\labeledto{\pi^*} H^1_{dR}(Y)_m[\delta]\and
  H^1_{dR}(X)_{ll}\labeledto{\pi^*} H^1_{dR}(Y)_{ll}[\delta],\]
and an exact sequence
\[0\to M(\Z/p\Z)\to H^1_{dR}(X)_\et\labeledto{\pi^*}
  H^1_{dR}(Y)_\et[\delta]\to M(\Z/p\Z)\to 0.\]
The image $\pi^*\left(H^1_{dR}(X)_\et\right)$ is equal to
  $\delta^{p-1}H^1_{dR}(Y)_\et$. 
\item $\pi_*$ induces an isomorphism
  \[H^1_{dR}(Y)_\et/\delta
    H^1_{dR}(Y)_\et\labeledto{\pi_*}H^1_{dR}(X)_\et\]
  which identifies the line
  \begin{multline*}
    \im\left(H^1_{dR}(Y)_\et[\delta]\to
      H^1_{dR}(Y)_\et/\delta H^1_{dR}(Y)_\et\right)\\
    =\Ker\left(H^1_{dR}(Y)_\et/\delta H^1_{dR}(Y)_\et
      \labeledlongto{\delta^{p-1}} H^1_{dR}(Y)_\et[\delta]\right)
  \end{multline*}
  with the line
  \[\im\left(M(\Z/pZ)\into H^1_{dR}(X)_\et\right)\]
  defined in part \textup{(1)}.
\end{enumerate}
\end{prop}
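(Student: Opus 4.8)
The plan is to build everything out of the Hochschild--Serre spectral sequence for $\pi:Y\to X$, together with the decomposition \eqref{eq:et-m-ll} and the trace relation \eqref{eq:trace}. First I would set up the low-degree exact sequences in flat and coherent cohomology: since $\pi$ is finite, $R^q\pi_*=0$ for $q>0$, so $\pi_*\OO_Y$ and $\pi_*\Omega^1_{Y/k}$ are locally free $\OO_X[G]$-modules, and because $\pi$ is unramified (\'etale) these sheaves are in fact locally free over $\OO_X[G]$ — this is the sheaf-level input that eventually forces the $V_p$-summands. From the five-term exact sequences for $H^*(X,\pi_*(-))$ with $(-)=\Z/p\Z,\mu_p$ (these give part~(1)'s canonical maps $M(\Z/p\Z)\to H^1_{dR}(X)_\et$ and $H^1_{dR}(X)_m\to M(\mu_p)$ via the edge maps $H^1(X,\Z/p\Z)\to H^1(Y,\Z/p\Z)$ etc.), I get the canonical homomorphisms, and their interchange under Cartier duality follows because duality swaps $\pi_*\Z/p\Z$-cohomology with $\pi_*\mu_p$-cohomology and reverses arrows. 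The identification of $\MM_X=M(\GG_X)$ as the stated subquotient is then just unwinding Definition~\ref{def:GG_X} applied via the Dieudonn\'e functor (recall $M(-)$ is contravariant, so the surjection $J_X[p]\onto\Z/p\Z$ becomes the injection $M(\Z/p\Z)\into H^1_{dR}(X)$, and dually for $\mu_p$).

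Next, for part~(2) I would compute the $k[G]$-module structure. The clean statement is: $H^1_{dR}(Y)$ is, as a $k[G]$-module, the direct sum of a free part and a ``small'' part, and the small part is pinned down by $H^0_{dR}$ and $H^2_{dR}$. Concretely, I would argue that the cohomology of the complex computing $H^*_{dR}(Y)$ is $k[G]$-free away from the contributions of $H^0$ and $H^2$; the cleanest route is to use that $H^*(Y,\OO_Y)=H^*(X,\pi_*\OO_Y)$ with $\pi_*\OO_Y$ locally free over $\OO_X[G]$, so by a standard descent/projection-formula argument $H^1(Y,\OO_Y)\cong k[G]^{\,?}\oplus(\text{correction from }H^0)$; Euler-characteristic bookkeeping using Riemann--Hurwitz $g_Y-1=p(g_X-1)$ and Deuring--Shafarevich $f_Y-1=p(f_X-1)$ then forces the multiplicities. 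Since $\dim V_1=1$ and $\dim V_p=p$, matching dimensions gives $H^1_{dR}(Y)\cong V_1^2\oplus V_p^{2g_X-2}$ (the two $V_1$'s are the trivial-$G$ lines coming from $H^0_{dR}(Y)$ and its dual $H^2_{dR}(Y)$, equivalently from $\Z/p\Z$ and $\mu_p$); taking \'etale, multiplicative, and local-local parts — which are $G$-stable — and using that $H^1_{dR}(Y)_\et$ has $k$-dimension $f_Y=p(f_X-1)+1$, $H^1_{dR}(Y)_{ll}$ has dimension $2g_Y-2f_Y=2p(g_X-f_X)=2ph_X$ wait — I should say $2h_Y$ with $h_Y=g_Y-f_Y$, and check $h_Y=ph_X$, which follows by subtracting the two formulas — gives the three displayed isomorphisms with the stated exponents.

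For parts~(3) and~(4), I would use the semilinear algebra of $\pi^*$ and $\pi_*$. The relation $\pi^*\pi_*=\delta^{p-1}$ is \eqref{eq:trace}; combined with $\pi_*\pi^*=[p]=0$ (since $\pi$ has degree $p$ and we are in characteristic $p$, the pushforward-pullback is multiplication by $p$, which kills $p$-torsion de Rham cohomology) one gets that $\im\pi^*\subseteq H^1_{dR}(Y)[\delta^{\,?}]$: indeed $\delta\cdot\pi^* = (\gamma-1)\pi^*=0$ because $G$ acts trivially on the image of $\pi^*$ (it is the $G$-invariant part, and on a free module $V_p$ the invariants are exactly $\delta^{p-1}V_p=V_p[\delta]$). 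So $\pi^*$ lands in $H^1_{dR}(Y)[\delta]$, and on the free summand $V_p^{2g_X-2}$ it must surject onto $\delta^{p-1}V_p^{2g_X-2}=V_p^{2g_X-2}[\delta]$; this handles the isomorphisms $H^1_{dR}(X)_m\isoto H^1_{dR}(Y)_m[\delta]$ and $H^1_{dR}(X)_{ll}\isoto H^1_{dR}(Y)_{ll}[\delta]$ (here there is no $V_1$ correction because $H^1_{dR}(Y)_m$ and $H^1_{dR}(Y)_{ll}$ each have exactly one or zero $V_1$ and the $V_1$ in the multiplicative part is the image of $\mu_p$, which is exactly where $\pi^*\mu_p$ goes — one must check the $V_1$'s match up, using part~(1)). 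For the \'etale part, the extra $V_1$ coming from $H^0_{dR}$ versus the one in $H^1_{dR}(X)_\et$ produces the four-term sequence $0\to M(\Z/p\Z)\to H^1_{dR}(X)_\et\to H^1_{dR}(Y)_\et[\delta]\to M(\Z/p\Z)\to0$; the kernel $M(\Z/p\Z)$ is the obstruction in Hochschild--Serre ($H^1(X,\pi_*\OO_Y)$ vs.\ $H^1(Y,\OO_Y)$ differ by $H^0$-terms), the cokernel $M(\Z/p\Z)$ is the line $H^0_{dR}(Y)\subset H^1_{dR}(Y)_\et[\delta]$ not hit by $\pi^*$ — wait, $\pi^*$ of the trivial class is the trivial class, so rather the cokernel is $H^1_{dR}(Y)_\et[\delta]$ modulo $\im\pi^*=\delta^{p-1}H^1_{dR}(Y)_\et$, which on $V_1^2\oplus V_p^{\dots}$ is exactly one copy of $V_1=M(\Z/p\Z)$. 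That $\im\pi^*=\delta^{p-1}H^1_{dR}(Y)_\et$ is the last sentence of~(3): $\supseteq$ is \eqref{eq:trace}, and $\subseteq$ is the invariance computation just given, plus dimension count ($\dim\im\pi^*=f_X+\text{(something)}$ matched against $\dim\delta^{p-1}V_p^{f_X-1}=f_X-1$ plus the $V_1$'s — one must be careful here). Finally for~(4): $\pi_*$ is surjective (as $\pi^*\pi_*=\delta^{p-1}$ is nonzero on the free part and $\pi_*$ on the $V_1$-parts is checked directly), so $\pi_*$ factors through $H^1_{dR}(Y)_\et/\delta$; a dimension count ($\dim H^1_{dR}(Y)_\et/\delta = f_X$ since $V_1/\delta=V_1$, $V_p/\delta$ is $1$-dimensional, total $2\cdot1+(f_X-1)\cdot1=f_X+1$ — hmm, that's $f_X+1$, not $f_X$; the discrepancy is because one $V_1$ in $H^1_{dR}(Y)_\et$ is genuinely there, so $\pi_*$ must have a further kernel of dimension $1$, which is exactly the image of $H^1_{dR}(Y)_\et[\delta]$, i.e.\ the content of~(4)) shows $\pi_*$ has $1$-dimensional kernel on $H^1_{dR}(Y)_\et/\delta$, and that kernel is the claimed line; identifying it with $\im(M(\Z/p\Z)\into H^1_{dR}(X)_\et)$ is a diagram chase through the commuting square relating $\pi^*,\pi_*,\delta^{p-1}$, and the equality of $\im(\cdot[\delta]\to\cdot/\delta)$ with $\Ker(\cdot/\delta\xrightarrow{\delta^{p-1}}\cdot[\delta])$ is the $k[G]$-module identity that on $V_1$ both are everything and on $V_p$ both are zero — so it reduces to the structure in~(2).

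\textbf{Main obstacle.} The genuinely delicate point is the bookkeeping of the two ``extra'' $V_1$-summands in $H^1_{dR}(Y)_\et$ and $H^1_{dR}(Y)_m$ and making sure the canonical maps of part~(1) are the \emph{correct} copies of $M(\Z/p\Z)$ and $M(\mu_p)$ — i.e., that the line $\pi^*$ fails to hit in the \'etale part is canonically $M(\Z/p\Z)$ dual to the line $\pi_*$ kills, and that these are compatible with Cartier duality and with Definition~\ref{def:GG_X}. This is where Hochschild--Serre must be invoked carefully rather than just by dimension count: the edge maps give the canonical identifications, and one has to check that the connecting/transgression maps in the de Rham (hypercohomology) spectral sequence are compatible with the $F,V$-action described in \eqref{eq:D-deR}. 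Everything else — the $k[G]$-freeness away from $H^0,H^2$, the dimension counts via Riemann--Hurwitz and Deuring--Shafarevich, and the semilinear algebra of $\delta$ on $V_1\oplus V_p$-modules — is routine once this canonicity is nailed down.
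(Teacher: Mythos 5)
Your overall architecture (class of the cover, the trace relation $\pi^*\pi_*=\delta^{p-1}$, Hochschild--Serre, Krull--Schmidt bookkeeping) is close in spirit to the paper's, but two steps as written do not hold up. First, in part (2) the entire content is the equivariant structure theorem, and you assert it rather than prove or cite it: ``$\pi_*\OO_Y$ is locally free over $\OO_X[G]$, so by a standard descent/projection-formula argument $H^1(Y,\OO_Y)\cong k[G]^{?}\oplus(\text{correction from }H^0)$'' is not a consequence of the projection formula --- cohomology of a locally free $\OO_X[G]$-sheaf need not be free over $k[G]$, and Euler-characteristic counts cannot exclude the intermediate indecomposables $V_i$ ($1<i<p$) unless you already know the structure of the whole module (for instance the two $V_1$'s from $H^0$ and $H^2$ could a priori glue into a $V_2$ inside $H^1_{dR}(Y)$). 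The paper supplies exactly this input by citing Tamagawa ($H^0(Y,\Omega^1_Y)\cong V_1\oplus V_p^{g_X-1}$) and Nakajima ($H^1_{dR}(Y)_m\cong V_1\oplus V_p^{f_X-1}$), plus Serre/Cartier duality and the projectivity of $V_p$ for the local-local part. A syzygy/cohomological-triviality argument in the stable module category of $k[G]$ could replace these citations, but you would have to actually make it; as written this is the missing idea.

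Second, your treatment of parts (3)--(4) goes wrong where the $V_1$'s live. In (3), containment $\im\pi^*\supseteq\delta^{p-1}H^1_{dR}(Y)$ does not give the isomorphisms onto $H^1_{dR}(Y)_m[\delta]$ and $H^1_{dR}(Y)_{ll}[\delta]$ (on the multiplicative part $[\delta]$ is strictly larger than $\delta^{p-1}(\cdot)$), and injectivity of $\pi^*$ is never addressed; the paper gets both from the de Rham Hochschild--Serre sequence, whose error terms $H^i(G,H^0_{dR}(Y))\cong M(\Z/p\Z)$ are \'etale and hence invisible on the $m$ and $ll$ parts. In (4) your dimension count is simply wrong: $H^1_{dR}(Y)_\et\cong V_1\oplus V_p^{f_X-1}$ has coinvariants of dimension $1+(f_X-1)=f_X$, equal to $\dim H^1_{dR}(X)_\et$ (your ``$2\cdot 1+(f_X-1)$'' counts both $V_1$'s of the full module as \'etale), so there is no ``discrepancy'' to fix, and your conclusion that $\pi_*$ has a one-dimensional kernel on $H^1_{dR}(Y)_\et/\delta$ contradicts the very statement being proved; identifying that ``kernel'' with a line in the target via $\pi_*$ is then incoherent. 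Finally, the real point of (4) --- that the distinguished line is carried by $\pi_*$ to the canonical line of part (1) --- is precisely what you defer to ``a diagram chase'' and label the main obstacle; the paper resolves it by constructing the explicit Frobenius-fixed class $\eta_X$ from the Artin--Schreier sequence, showing via Hochschild--Serre that $\Ker(\pi^*)$ on $H^1_{dR}(X)_\et$ is spanned by $\eta_X$, and then chasing $\pi^*\pi_*=\delta^{p-1}$. So the proposal has genuine gaps at the load-bearing steps rather than being a complete alternative proof.
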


\begin{rems}\label{rem:dualities}\mbox{}
  \begin{enumerate}
  \item It is straightforward to check that the cup product on de Rham
cohomology induces a duality between $H^1_{dR}(Y)_\et$ and
$H^1_{dR}(Y)_m$ and its restriction to $H^1_{dR}(Y)_{ll}$ is perfect
and gives the latter the structure of a self-dual module.  It is also
easy to see that $H^1_{dR}(Y)_{ll}$ is a $BT_1$.  Since the pairing
satisfies the compatibility $\<\delta m,n\>_Y=\<m,\tilde\delta  n\>_Y$
(see Section~\ref{s:G-mods}), we find that the orthogonal complement
of $\delta^iH^1_{dR}(Y)_{ll}$ is $\delta^{p-i}H^1_{dR}(Y)_{ll}$.  This
implies that
\[\frac{H^1_{dR}(Y)_{ll}}{\delta^iH^1_{dR}(Y)_{ll}}
  \quad\text{is dual to}\quad
  H^1_{dR}(Y)_{ll}[\delta^i].\]
On the other hand, $\delta^{p-i}$ induces an isomorphism
\[\frac{H^1_{dR}(Y)_{ll}}{\delta^iH^1_{dR}(Y)_{ll}}\to
  H^1_{dR}(Y)_{ll}[\delta^i].\] Therefore, each of the submodules
$H^1_{dR}(Y)_{ll}[\delta^i]$ is self-dual, and they are easily seen to
be $BT_1$ modules as well.  (One slight subtlety: These pairings are
not compatible with restriction.  Indeed, the restriction to
$H^1_{dR}(Y)_{ll}[\delta^{i-1}]$ of the pairing just constructed on
$H^1_{dR}(Y)_{ll}[\delta^i]$ is degenerate for $i>1$.)
\item The maps in part (1) may also be interpreted as a 3-step
  filtration on $H^1_{dR}(X)$ with
  \begin{align*}
    H^1_{dR}(X)^3&=H^1_{dR}(X),\\
    H^1_{dR}(X)^2&=\Ker\left(H^1_{dR}(X)\to M(\mu_p)\right),\\
    H^1_{dR}(X)^1&=\im\left(M(\Z/p\Z)\to H^1_{dR}(X)\right),\\
    \noalign{\qquad\quad and}
    H^1_{dR}(X)^0&=0.
  \end{align*}
  The subquotients are $M(\mu_p)$, $\MM_X$,
  and $M(\Z/p\Z)$.
  \item The filtration above is self-dual in the sense that
    $H^1_{dR}(X)^2$ and $H^1_{dR}(X)^1$ are orthogonal complements
    of one another.
  \end{enumerate}
\end{rems}

Figure~\ref{fig:1} may help to digest the statement of the
Proposition.  It illustrates the case $p=5$, $g_X=5$, and $f_X=4$.
Each box represents a one-dimensional subspace of $H^1_{dR}(Y)$ (the
upper group) or $H^1_{dR}(X)$ (the lower group).  On $H^1_{dR}(Y)$,
the action of $\delta$ shifts a given one-dimensional subspace to the
one represented by the box below (if there is one, otherwise to zero).
The groups on the left represent the multiplicative parts, those on
the right represent the \'etale parts, and those in the middle
represent the local-local parts.  The (canonical) class
$\eta_X\in H^1_{dR}(X)_\et$ is constructed in the proof and spans the
image of the injection in part (1) (i.e., the subspace
$H^1_{dR}(X)^1$.) .  The (non-canonical) class
$\omega_X\in H^1_{dR}(X)_m$ maps onto a class spanning the image of
the projection in part (1) (i.e., the subquotient
$H^1_{dR}(X)/H^1_{dR}(X)^2$).  We may choose $\omega_X$ so that
$\<\omega_X,\eta_X\>=1$.  The (non-canonical) classes $\omega_Y$ and
$\eta_Y$ can be chosen to satisfy $\pi^*\omega_X=\omega_Y$,
$\pi_*\eta_Y=\eta_X$, and $\<\omega_Y,\eta_Y\>=1$.  Caution: In
general, the lines spanned by $\omega_X$, $\omega_Y$, and $\eta_Y$ are
not invariant under $\Dk$.  This is closely related to the possible
non-splitting of the exact sequences \eqref{eq:GG_Y,et,ker} and
\eqref{eq:GG_Y,et,coker}.

\begin{proof}[Proof of Proposition~\ref{prop:H^1(Y)}]
  (1) Since $\pi:Y\to X$ is unramified, the choice of a fixed isomorphism
$\gal(Y/X)\cong\Z/p\Z$ makes $Y$ into a $\Z/p\Z$-torsor over $X$.  The
group that classifies such torsors is $H^1_\et(X,\Zps)$ (see, for example,
\cite[III.4]{MilneEC} or \cite[6.5.5]{PoonenRPV}). Therefore, there is a class in
$H^1_\et(X,\Zps)$ defined by the cover $\pi:Y\to X$ and the fixed
isomorphism $G\cong\Z/p\Z$, which will be denoted by $\eta_{X,\et}$.

Consider the exact sequence
\[0\to\Z/p\Z\to\OO_X\labeledto{\wp}\OO_X\to0\]
of sheaves for the \'etale topology on $X$ where $\wp(x)=x^p-x$.
Taking cohomology yields a homomorphism
\[H^1_{\et}(X,\Z/p\Z)\to H^1_\et(X,\OO_X)[\wp],\]
where $[\wp]$ indicates the kernel of $\wp$.  Since $H^0(Y,\OO_Y)=k$,
the image of $\eta_{X,\et}$ in $H^1_\et(X,\OO_X)[\wp]$ is non-zero,
and since $H^1_\et(X,\OO_X)[\wp]$ is the subset of the usual
(coherent) $H^1(X,\OO_X)$ fixed by Frobenius, we have a class
$\eta_{X,coh}\in H^1(X,\OO_X)$ fixed by Frobenius.  It has a canonical
lift to $H^1_{dR}(X)$ (take any lift and apply $F$), and we denote
this lift by $\eta_X$.  Since $F\eta_X=\eta_X$, we see that
$\eta_X\in H^1_{dR}(X)_\et$, and the injection in (1) is
$\alpha\mapsto \alpha\eta_X$.

The surjection in (1) is obtained from the injection by Cartier
duality, and is given more explicitly by $c\mapsto\<c,\eta_X\>_X$.

Decomposing the module $\MM_X$ into its \'etale, multiplicative, and
local-local parts shows that
\[\MM_{X,\et}=\coker\left(M(\Z/p\Z)\to H^1_{dR}(X)_\et\right),\]
\[\MM_{X,m}=\Ker\left(H^1_{dR}(X)_m\to M(\mu_p)\right),\]
and
\[\MM_{X,ll}=H^1_{dR}(X)_{ll}.\]
Thus $\MM_X$ is the Dieudonn\'e module of $\GG_X$.  This establishes
part (1) of the Proposition.

\begin{figure}
\begin{center}
\begin{tikzpicture}[scale=0.7]
% Y, m
  \draw[step=1cm] (-2,0) grid (-5,5);
  \draw[step=1cm] (-6,2) grid (-7,3) node[anchor=north west] {$\omega_Y$};
% Y, ll
  \draw[step=1cm] (-1,0) grid (1,5);
%Y, et
  \draw[step=1cm] (2,0) grid (5,5);
  \draw[step=1cm] (6,2) grid (7,3) node[anchor=north east] {$\eta_Y$};
%X,m
  \draw[step=1cm] (-6,-3) grid (-7,-2) node[anchor=north west] {$\omega_X$};
  \draw[step=1cm] (-2,-3) grid (-5,-2) node[anchor=south west] {$\uparrow\pi^*$};
%X, ll
  \draw[step=1cm] (-1,-3) grid (1,-2);
%X, et
  \draw[step=1cm] (2,-3) grid (5,-2) node[anchor=south east] {$\downarrow\pi_*$};
  \draw[step=1cm] (6,-3) grid (7,-2) node[anchor=north east] {$\eta_X$};
\end{tikzpicture}
\end{center}
\caption{}\label{fig:1}
\end{figure}
\bigskip

(2) Taking the multiplicative and \'etale parts of the de Rham
sequence~\eqref{eq:dR-seq} with $Z=Y$ yields isomorphisms
\[H^1_{dR}(Y)_m\cong H^0(Y,\Omega^1_Y)_m\and
  H^1_{dR}(Y)_\et\cong H^1(Y,\OO_Y)_\et.\]

Tamagawa \cite{Tamagawa51} proves that there is an isomorphism of
$k[G]$-modules 
\begin{equation}\label{eq:Tamagawa}
H^0(Y,\Omega^1_Y)\cong V_1\oplus V_p^{g_X-1},  
\end{equation}
and Serre duality yields
\[H^1(Y,\OO_Y)\cong V_1\oplus V_p^{g_X-1}.\]
Nakajima \cite{Nakajima85} proves that there is an isomorphism of
$k[G]$-modules 
\begin{equation}\label{eq:Nakajima}
H^1_{dR}(Y)_m\cong H^0(Y,\Omega^1_Y)_m\cong V_1\oplus V_p^{f_X-1},  
\end{equation}
and Cartier duality yields
\[H^1_{dR}(Y)_\et\cong H^1(Y,\Omega^1_Y)_\et\cong V_1\oplus V_p^{f_X-1}.\]
This establishes the first two claims in part (2).

For the third claim in part (2), let $h_X=g_X-f_X$, and note that the
last four displayed equations and \eqref{eq:dR-seq} show that
$H^1_{dR}(Y)_{ll}$ is an extension of $V_p^{h_X}$ by $V_p^{h_X}$.
Since $V_p$ is free, the extension splits and there is an isomorphism
of $k[G]$-modules
\[H^1_{dR}(Y)_{ll}\cong V_p^{2h_X}.\]

The fourth claim in part (2) is simply the direct sum of the three
preceding claims.  This completes the proof of part (2) of the
Proposition.

(3) Consider the Hochschild--Serre spectral sequence for $\pi$
in de Rham cohomology.  The sequence of low degree terms is
\begin{equation*}
0\to H^1(G,H^0_{dR}(Y))\to
  H^1_{dR}(X)\labeledto{\pi^*}H^1_{dR}(Y)[\delta]\to
  H^2(G,H^0_{dR}(Y)).  
\end{equation*}
We have
\[H^1(G,H^0_{dR}(Y))\cong H^2(G,H^0_{dR}(Y))\cong M(\Z/p\Z),\]
and these modules are \'etale, so taking multiplicative
and local-local parts of the sequence yields isomorphisms
\[H^1_{dR}(X)_m\labeledto{\pi^*}H^1_{dR}(Y)_m[\delta]\and
  H^1_{dR}(X)_{ll}\labeledto{\pi^*}H^1_{dR}(Y)_{ll}[\delta].\]
Taking the \'etale part yields an exact sequence
\begin{equation}\label{eq:HS-dR}
  0\to M(\Z/p\Z)\to H^1_{dR}(X)_\et\labeledto{\pi^*}
  H^1_{dR}(Y)_\et[\delta]\to M(\Z/p\Z)\to 0,
\end{equation}
where surjectivity on the right follows from a dimension count using
part (2).  Thus $\eta_X$ spans the kernel of $\pi^*$ in
\eqref{eq:HS-dR}.  We will prove the last claim in (3) after establishing (4).

(4) The Cartier dual of the isomorphism
\[H^1_{dR}(X)_m\labeledto{\pi^*}H^1_{dR}(Y)_m[\delta]\]
is the isomorphism
\[H^1_{dR}(Y)_m/\delta H^1_{dR}(Y)_m\labeledto{\pi_*}H^1_{dR}(X)_m.\]
It follows from part (2) that the image of the natural map
\[H^1_{dR}(Y)_\et[\delta]\to H^1_{dR}(Y)_\et/\delta
    H^1_{dR}(Y)_\et\]
is a line, and that it is equal to
\[\Ker\left(H^1_{dR}(Y)_\et/\delta H^1_{dR}(Y)_\et
  \labeledto{\delta^{p-1}}
  H^1_{dR}(Y)_\et[\delta] \right),\]
and therefore equal to 
\[\Ker\left(H^1_{dR}(Y)_\et/\delta H^1_{dR}(Y)_\et
  \labeledto{\pi^*\pi_*}
  H^1_{dR}(Y)_\et[\delta]\right). \]
This shows that $\pi_*$ identifies this line with
\[\Ker \left(H^1_{dR}(X)_\et\labeledto{\pi^*} H^1_{dR}(Y)_\et[\delta]\right),\]
and we observed above that this is the line spanned by $\eta_X$, i.e.,
the line defined in part (1).  This completes the proof of part (4).
We also deduce that
\[\pi^*\left(H^1_{dR}(X)_\et\right)=\pi^*\pi_*
  \left(H^1_{dR}(Y)_\et\right)
  =\delta^{p-1}\left(H^1_{dR}(Y)_\et\right),\]
and this establishes the last claim in part (3).
\end{proof}

We now turn to an elegant result that holds under the assumption that
$X$ has a $k$-rational point.

\begin{prop}\label{prop:H^1(Y,T)}
Assume that $X$ has a $k$-rational point $S$ and let
$T=\pi^{-1}(S)$. Let $\NN_Y$ be the hypercohomology group
\[\NN_Y:=\Hy^1\left(Y,\OO_Y(-T)\labeledto{d}\Omega^1_Y(T)\right).\]
Then
\begin{enumerate}
\item $\NN_Y\cong V_p^{2g_X}$ as $k[G]$-modules.
\item For $i=1,\dots,p$, there are isomorphisms
  \[\frac{\NN_Y[\delta^i]}{\NN_Y[\delta^{i-1}]}
    =\frac{\delta^{p-i}\NN_Y}{\delta^{p-i+1}\NN_Y}
    \cong H^1_{dR}(X)\]
  of $\Dk$-modules.
\item There are exact sequences of $\D_k[G]$-modules
  \[0\to H^0(Y,\OO_Y)\to H^0(Y,\OO_T) \to \NN_{Y,\et}\to
    H^1_{dR}(Y)_\et\to0,\]
  and
  \[0\to H^1_{dR}(Y)_m\to\NN_{Y,m}\to
H^0(Y,\Omega^1_Y(T)/\Omega^1_Y)
\to H^1(Y,\Omega^1_Y)\to0,\]
as well as an isomorphism
\[\NN_{Y,ll}\cong H^1_{dR}(Y)_{ll}.\]
\end{enumerate}
\end{prop}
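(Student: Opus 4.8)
The plan is to realize $\NN_Y$ as the degree‑one hypercohomology of a modified de Rham complex and to compare that complex, through an intermediate one, with the ordinary de Rham complex. For $(Z,D)$ equal to $(X,S)$ or $(Y,T)$, set $\widetilde\Omega^\bullet_Z:=(\OO_Z(-D)\labeledto{d}\Omega^1_Z(D))$ and $C^\bullet_Z:=(\OO_Z(-D)\labeledto{d}\Omega^1_Z)$, both in degrees $0$ and $1$; since $D$ is reduced we have $d(\OO_Z(-D))\subseteq\Omega^1_Z$, so there are inclusions of complexes $C^\bullet_Z\into\Omega^\bullet_Z$ with cokernel $\OO_D$ in degree $0$ and $C^\bullet_Z\into\widetilde\Omega^\bullet_Z$ with cokernel $\Omega^1_Z(D)/\Omega^1_Z$ in degree $1$. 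Because $D$ is reduced, the Cartier operator preserves $\Omega^1_Z(D)$ and $x\mapsto x^p$ preserves $\OO_Z(-D)$, so all four complexes carry $\Dk$-module structures (and, for $Z=Y$, commuting $G$-actions) given by the hypercocycle formulas of Section~\ref{s:H1dR}, and all the maps above are $\Dk[G]$-equivariant. The recurring input is the vanishing $\Hy^j(Z,\widetilde\Omega^\bullet_Z)=0$ for $j\ne1$: indeed $H^0(Z,\OO_Z(-D))=0$ as $\deg(-D)<0$, and Serre duality gives $H^1(Z,\Omega^1_Z(D))\cong H^0(Z,\OO_Z(-D))^\vee=0$.

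For (1) I would note that $\widetilde\Omega^\bullet_Y=\pi^*\widetilde\Omega^\bullet_X$ (as $\pi$ is \'etale) and feed this into the Hochschild--Serre spectral sequence $H^p(G,\Hy^q(Y,\widetilde\Omega^\bullet_Y))\Rightarrow\Hy^{p+q}(X,\widetilde\Omega^\bullet_X)$ for the \'etale $G$-cover $\pi$. Only the row $q=1$ is nonzero, so it degenerates and yields $H^p(G,\NN_Y)\cong\Hy^{p+1}(X,\widetilde\Omega^\bullet_X)$, which vanishes for $p\ge1$ by the same vanishing applied to $X$. Since $H^2(G,\NN_Y)=\NN_Y[\delta]/\delta^{p-1}\NN_Y$, this forces $\NN_Y[\delta]=\delta^{p-1}\NN_Y$, so $\NN_Y$ is $k[G]$-free by Lemma~\ref{lemma:G-free}; its rank comes from $\dim_k\NN_Y=-\chi(\widetilde\Omega^\bullet_Y)=\chi(\Omega^1_Y(T))-\chi(\OO_Y(-T))=2g_Y-2+2p$, which equals $2pg_X$ by Riemann--Hurwitz, so $\NN_Y\cong V_p^{2g_X}$.

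For (2): freeness gives $\NN_Y[\delta^i]=\delta^{p-i}\NN_Y$, the asserted equality of subquotients. Multiplication by $\delta$ induces $\Dk$-linear maps $\delta^j\NN_Y/\delta^{j+1}\NN_Y\to\delta^{j+1}\NN_Y/\delta^{j+2}\NN_Y$ which a local computation on $V_p$ shows to be isomorphisms for $j=0,\dots,p-2$, so all $p$ graded pieces are $\Dk$-isomorphic and it is enough to identify the piece for $i=1$, namely $\NN_Y[\delta]=\NN_Y^G$. The degenerate spectral sequence of (1) gives a $\Dk$-isomorphism $\NN_Y^G=H^0(G,\NN_Y)\isoto\Hy^1(X,\widetilde\Omega^\bullet_X)$, and the long exact sequences of $C^\bullet_X\into\Omega^\bullet_X$ and $C^\bullet_X\into\widetilde\Omega^\bullet_X$ give $\Dk$-isomorphisms $H^1_{dR}(X)\cong\Hy^1(X,C^\bullet_X)\cong\Hy^1(X,\widetilde\Omega^\bullet_X)$: the only other terms in those sequences are $H^\bullet(X,\OO_S)$ (in degree $0$ only, with $H^0\cong k$ hit isomorphically by $H^0_{dR}(X)$), the one-dimensional $H^0(X,\Omega^1_X(S)/\Omega^1_X)$, and $\Hy^2(X,C^\bullet_X)\cong H^2_{dR}(X)$ against $\Hy^2(X,\widetilde\Omega^\bullet_X)=0$, and a dimension count pins both maps on $\Hy^1$ to isomorphisms.

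For (3) I would apply the two short exact sequences of complexes on $Y$, $0\to C^\bullet_Y\to\Omega^\bullet_Y\to\OO_T[0]\to0$ and $0\to C^\bullet_Y\to\widetilde\Omega^\bullet_Y\to(\Omega^1_Y(T)/\Omega^1_Y)[1]\to0$, and take long exact sequences of $\Dk[G]$-modules. The hypercohomology of $\OO_T[0]$ is concentrated in degree $0$ and \'etale ($F$ is the bijective $p$-power map on the \'etale $k$-algebra $H^0(Y,\OO_T)$, and $V=0$), while that of $(\Omega^1_Y(T)/\Omega^1_Y)[1]$ is concentrated in degree $1$ and multiplicative ($V=\CC$ is invertible on the classes $\tfrac{dt}{t}$ and $F=0$). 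Taking local-local parts at once gives $\NN_{Y,ll}\cong\Hy^1(Y,C^\bullet_Y)_{ll}\cong H^1_{dR}(Y)_{ll}$. Taking \'etale parts of the second sequence gives $\Hy^1(Y,C^\bullet_Y)_\et\cong\NN_{Y,\et}$, whence the \'etale part of the first sequence becomes, using $H^0_{dR}(Y)=M(\Z/p\Z)$ and $H^1(Y,\OO_T)=0$, the first asserted exact sequence $0\to H^0(Y,\OO_Y)\to H^0(Y,\OO_T)\to\NN_{Y,\et}\to H^1_{dR}(Y)_\et\to0$; dually, the multiplicative part of the first sequence gives $\Hy^1(Y,C^\bullet_Y)_m\cong H^1_{dR}(Y)_m$, and then the multiplicative part of the second sequence becomes, using $\Hy^2(Y,C^\bullet_Y)\cong H^2_{dR}(Y)$ and the isomorphism $H^1(Y,\Omega^1_Y)\isoto H^2_{dR}(Y)=M(\mu_p)$ from the de Rham spectral sequence together with naturality of connecting maps, the second asserted exact sequence $0\to H^1_{dR}(Y)_m\to\NN_{Y,m}\to H^0(Y,\Omega^1_Y(T)/\Omega^1_Y)\to H^1(Y,\Omega^1_Y)\to0$. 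The main obstacle is not depth but bookkeeping: one must install the $\Dk$-module structure on the modified complexes correctly --- in particular that $V$, the Cartier operator, is well defined on $\Hy^\bullet(\widetilde\Omega^\bullet_Z)$ and $\Hy^\bullet(C^\bullet_Z)$ --- and check that the Hochschild--Serre spectral sequence and all the comparison maps respect $F$, $V$ and the $G$-action; granting this, everything above is diagram chasing supplemented by the recorded dimension counts.
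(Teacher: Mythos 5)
Your proof is correct, and for part (3) it is essentially the paper's argument: the same two short exact sequences of complexes on $Y$ (the paper denotes $\Hy^1$ of your intermediate complex $C^\bullet_Y$ by $\NN_Y^2$), followed by the same sorting into \'etale, multiplicative, and local-local pieces. Where you genuinely diverge is in parts (1) and (2). The paper obtains $k[G]$-freeness of $\NN_Y$ by quoting Nakajima's theorem that $H^0(Y,\Omega^1_Y(T))$ is $k[G]$-free of rank $g_X$, adding Serre duality for $H^1(Y,\OO_Y(-T))$, and using the modified de Rham sequence (an extension of free by free is free); you instead deduce freeness cohomologically, from the degeneration of the Hochschild--Serre spectral sequence for the hypercohomology of $\widetilde\Omega^\bullet_Y=\pi^*\widetilde\Omega^\bullet_X$: the vanishing of $\Hy^j$ for $j\ne 1$ on both $X$ and $Y$ gives $H^2(G,\NN_Y)=\NN_Y[\delta]/\delta^{p-1}\NN_Y=0$, so Lemma~\ref{lemma:G-free} applies, and the rank follows from an Euler-characteristic computation plus Riemann--Hurwitz. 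This buys a more self-contained part (1) (no appeal to Nakajima's freeness theorem, though the paper still needs Tamagawa--Nakajima elsewhere, e.g.\ in Propositions~\ref{prop:H^1(Y)} and \ref{prop:M-props}, so nothing is saved globally). For part (2), the paper applies Hochschild--Serre to the individual sheaves $\OO_X(-S)$ and $\Omega^1_X(S)$ and then uses the $\deg S=1$ identifications $H^1(X,\OO_X(-S))\cong H^1(X,\OO_X)$ and $H^0(X,\Omega^1_X(S))\cong H^0(X,\Omega^1_X)$, whereas you read off $\NN_Y[\delta]=\NN_Y^G\cong\Hy^1(X,\widetilde\Omega^\bullet_X)$ directly from the degenerate spectral sequence and then compare with $H^1_{dR}(X)$ through the intermediate complex $C^\bullet_X$ with a dimension count; both comparisons are sound and carry the same content. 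The bookkeeping you flag at the end --- that for reduced $D$ the Cartier operator preserves $\Omega^1_Z(D)$ and $p$-th powers preserve $\OO_Z(-D)$, so the modified complexes are $\Dk[G]$-complexes, and that $\pi^*$, the edge maps, and the comparison maps commute with $F$, $V$, and $G$ --- is exactly the implicit verification the paper also relies on, so acknowledging it is appropriate rather than a gap.
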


The $\Dk[G]$-module structures and homomorphisms in part (3) will be
made explicit in the proof.

\begin{rems}\label{rem:more-on-pairings2}\mbox{}
  \begin{enumerate}
\item In parallel with Remark~\ref{rem:dualities},  each of the subquotients
    $\NN_Y[\delta^i]/\NN_Y[\delta^j]$ for $0\le i<j\le p$, is
    self-dual, but not in a way compatible with restrictions of pairings.
\item The exact sequences appearing in part (3) may also be
  interpreted as a 3-step filtration on $\NN_Y$ with
  \begin{align*}
    \NN_Y^3&=\NN_Y,\\
    \NN_Y^2&=\ker\left(\NN_Y\to  H^0(Y,\Omega^1_Y(T)/\Omega^1_Y)\right),\\
    \NN_Y^1&=\im\left(H^0(Y,\OO_T)\to \NN_Y\right),\\
    \noalign{\qquad\quad and}
    \NN_Y^0&=0.
  \end{align*}
  The subquotients are
\begin{multline*}
  \qquad\qquad \Ker\left( H^0(Y,\Omega^1_Y(T)/\Omega^1_Y)
    \to H^1(Y,\Omega^1_Y)\right),\quad
H^1_{dR}(Y),\quad\text{and}\quad\\
\coker\left(H^0(Y,\OO_Y)\to H^0(Y,\OO_T)\right).\qquad\qquad
\end{multline*}
  \item The filtration $\NN_Y^i$ is self-dual in the sense that
    $\NN_Y^2$ and $\NN_Y^1$ are orthogonal complements to one another.
  \item The filtration on $\NN_Y$ induces a filtration on each of the
  subquotients $\NN_Y[\delta^i]/\NN_Y[\delta^{i+1}]$, and the induced
  filtration is the one on $H^1_{dR}(X)$ in Remark~\ref{rem:dualities}.
  \end{enumerate}    
\end{rems}

Figure~\ref{fig:2} illustrates the case $g_X=5$, $f_X=4$, and $p=5$,
with the same conventions as in the previous figure.  In this case,
$\pi_*:\NN_Y/\delta\NN_Y\to H^1_{dR}(X)$ is an isomorphism, as is
$\pi^*:H^1_{dR}(X)\to\NN_Y[\delta]$.  The gray zone on the right
represents the submodule $\NN^1_Y$ and the gray zone on the left
represents the quotient module $\NN^3_Y/\NN_Y^2$.  Note that the
classes $\omega_X$, $\eta_Y$, and $\omega_Y$ are not canonically
defined.

\begin{proof}[Proof of Proposition~\ref{prop:H^1(Y,T)}]
(1) By \cite[Thm.~1]{Nakajima85}, $H^0(Y,\Omega^1_Y(T))$ is free over
$k[G]$ of rank $g_X$.  The same is true of $H^1(Y,\OO_Y(-T))$ by Serre
duality.  The modified de Rham exact sequence
\[0\to H^0(Y,\Omega^1_Y(T))\to
  \Hy^1\left(Y,\OO_Y(-T)\labeledto{d}\Omega^1_Y(T)\right) \to
  H^1(Y,\OO_Y(-T))\to 0\]
shows that
$\NN_Y=\Hy^1\left(Y,\OO_Y(-T)\labeledto{d}\Omega^1_Y(T)\right)$ is
$k[G]$-free of rank $2g_X$.

(2) The quotients appearing in the statement are all isomorphic (via a
suitable power of $\delta$) to $\NN_Y[\delta]$, so it will suffice to
prove that $\NN_Y[\delta]\cong H^1_{dR}(X)$ as $\Dk$-modules.

Note that $\pi^*\OO_X(-S)=\OO_Y(-T)$ and
$\pi^*\Omega^1_X(S)=\Omega^1_Y(T)$.
The exact sequence of low degree terms of the Hochschild-Serre
spectral sequence for $Y\to X$ and $\OO_X(-S)$ yields an isomorphism
\[\pi^*:H^1(X,\OO_X(-S))\isoto H^1(Y,\OO_Y(-T))[\delta],\]
and since $Y\to X$ is unramified, it is clear that we have an
isomorphism
\[\pi^*:H^0(X,\Omega^1_X(S))\isoto H^0(Y,\Omega^1_Y(T))[\delta].\]
Using the modified de Rham sequences of $\OO_X(-S)\labeledto{d}\Omega^1_X(S)$
and $\OO_Y(-T)\labeledto{d}\Omega^1_Y(T)$ shows that $\pi^*$ induces an
isomorphism
\[\pi^*:\Hy^1\left(X,\OO_X(-S)\labeledto{d}\Omega^1_X(S)\right)
  \isoto\Hy^1\left(Y,\OO_Y(-T)\labeledto{d}\Omega^1_Y(T)\right)[\delta].\]
Since $\deg S=1$, we also have $H^1(X,\OO_X(-S))\cong H^1(X,\OO_X)$,
and $H^0(X,\Omega^1_X(S))\cong H^0(X,\Omega^1_X)$, so
\[H^1_{dR}(X)=\Hy^1\left(\OO_X\labeledto{d}\Omega^1_X\right)
  \cong \Hy^1\left(X,\OO_X(-S)\labeledto{d}\Omega^1_X(S)\right).\]
This completes the proof of part (2).

\begin{figure}
\begin{center}
\begin{tikzpicture}[scale=0.7]
% Y, m
  \draw[step=1cm] (-2,0) grid (-5,5);
  \draw[step=1cm] (-5,0) grid (-6,1) node[anchor=north west]
  {$\omega_Y$};
  \fill[step=1cm,color=gray!50]  (-6,1) rectangle (-5,5);
  \draw[step=1cm]  (-6,1) grid (-5,5);
% Y, ll
  \draw[step=1cm] (-1,0) grid (1,5);
%Y, et
  \draw[step=1cm] (2,0) grid (5,5);
  \draw[step=1cm] (5,4) grid (6,5) node[anchor=north east] {$\eta_Y$};
  \fill[step=1cm,color=gray!50]  (5,0) rectangle (6,4);
  \draw[step=1cm]  (5,0) grid (6,4);
%X,m
  \draw[step=1cm] (-5,-3) grid (-6,-2) node[anchor=north west] {$\omega_X$};
  \draw[step=1cm] (-2,-3) grid (-5,-2) node[anchor=south west] {$\uparrow\pi^*$};
%X, ll
  \draw[step=1cm] (-1,-3) grid (1,-2);
%X, et
  \draw[step=1cm] (2,-3) grid (5,-2) node[anchor=south east] {$\downarrow\pi_*$};
  \draw[step=1cm] (5,-3) grid (6,-2) node[anchor=north east] {$\eta_X$};
\end{tikzpicture}
\end{center}
\caption{}\label{fig:2}
\end{figure}

(3) Note that there are exact sequences of complexes of coherent sheaves on $Y$:
\[0\to\left(\OO_Y(-T)\labeledto{d}\Omega^1_Y\right)\to
  \left(\OO_Y(-T)\labeledto{d}\Omega^1_Y(T)\right)
  \to\left(\Omega^1_Y(T)/\Omega^1_Y\right)[-1]\to0,\]
  and
  \[0\to \left(\OO_Y(-T)\labeledto{d}\Omega^1_Y\right)\to
    \left(\OO_Y\labeledto{d}\Omega^1_Y\right)\to
    \OO_T\to 0.\]
  Let $\NN_Y^2:=\Hy^1\left(Y,\OO_Y(-T)\labeledto{d}\Omega^1_Y\right)$ and
  $\NN_Y^1:= \coker\left(H^0(Y,\OO_Y)\to H^0(T,\OO_T)\right)$.

Taking cohomology of the first exact sequence above, we have
\begin{equation}\label{eq:NN-dev1}
0\to\NN^2_Y\to\NN_Y\to H^0(Y,\Omega^1_Y(T)/\Omega^1_Y)
     \to H^1(Y,\Omega^1_Y)\to0,  
\end{equation}
where the surjection on the right is the sum of residues map.  Note
that $H^0(Y,\Omega^1_Y(T)/\Omega^1_Y)$ is $k[G]$-free of rank 1, and
$H^1(Y,\Omega^1_Y)$ is $k$ with the trivial $k[G]$ action.  The action
of $F$ on both of these spaces is zero, and the $V$ action is
induced by the Cartier operator on differentials.

Taking cohomology of the second exact sequence above, we have
\begin{equation}\label{eq:NN-dev2}  
0\to H^0(Y,\OO_Y)\to H^0(Y,\OO_T)\to
     \NN_Y^2\to H^1_{dR}(Y)\to0.
\end{equation}
  Note that $H^0(Y,\OO_T)$ is $k[G]$-free of rank 1, and
   $H^0(Y,\OO_Y)$ is $k$ with the trivial $k[G]$ action.  The action
   of $F$ on both of these spaces is the usual action of Frobenius,
   and the $V$ action is trivial.

The last two displayed exact sequences give the asserted filtration on
$\NN_Y$, and this completes the proof of part (3) of the proposition.
\end{proof}

\begin{rems}\label{rem:NN}\mbox{}
  \begin{enumerate}
  \item   The hypercohomology group $\NN_Y$ is closely related to the de
  Rham cohomology of the singular curve associated to $Y$ and $T$
  where $T$ is viewed as a "modulus", as in \cite[Ch.~4]{SerreAGCF}.
\item   Suppose that $g_X>1$ and choose a $k$-rational, effective, reduced
  divisor $S$ on $X$.  A canonical divisor of $X$ is
  $k$-rational and effective, and since $k$ is perfect, the underlying
  reduced divisor of a $k$-rational divisor is again $k$-rational, so
  there always exists a divisor $S$ as above with degree $\le
  2g_X-2$.  Let $T=\pi^{-1}(S)$.  Then the proof of
  Proposition~\ref{prop:H^1(Y,T)} applies essentially verbatim and
  shows that $\NN_Y$ is free over $k[G]$ and that
  \[ \NN_Y[\delta]\cong \NN_X:=
    \Hy^1\left(X,\OO_X(-S)\labeledto{d}\Omega^1_X(S)\right).\]
Thus, at the expense of enlarging $H^1_{dR}(X)$ and $H^1_{dR}(Y)$ by
certain simple \'etale and multiplicative $\D_k$-modules, we can
always arrange that the cohomology associated to $Y$ is $k[G]$-free
with subquotients isomorphic to the cohomology associated to $X$.
\item We may also choose two reduced, strictly effective divisors
  $S_1$ and $S_2$ on $X$, set $T_i=\pi^*(S_i)$ and take
  hypercohomology of the complexes
  \[\OO_X(-S_1)\to\Omega^1_X(S_2)\and
    \OO_Y(-T_1)\to\Omega^1_Y(T_2).\]
  Then the latter is a free $k[G]$-module with subquotients isomorphic
  to the former.
  \end{enumerate}
\end{rems}

\section{Proofs of Proposition~\ref{def:GG_X} and
  Theorems~\ref{thm:G-str-genl-k} and
  \ref{thm:k-point}\label{s:groups}}\label{s:first-proofs}

\begin{proof}[Proof of Propostion~\ref{def:GG_X}]
  The group scheme homomorphisms in the first sentence of
  Definition-Proposition~\ref{def:GG_X} are obtained by applying the
  Dieudonn\'e functor to the $\D_k$-module homomorphisms in part (1)
  of Proposition~\ref{prop:H^1(Y)}.  This establishes the claims in
  Proposition-Definition~\ref{def:GG_X}, and it identifies $\MM_X$ as the
  Dieudonn\'e module of $\GG_X$.
\end{proof}

\begin{proof}[Proof of Theorem~\ref{thm:G-str-genl-k}]
  (1)  Splitting the 4-term exact sequence in part (3) of
  Proposition~\ref{prop:H^1(Y)} into two parts and using the
  identification of the image of $\pi_*$ there, we obtain
\[0\to M(\Z/p\Z)\to H^1_{dR}(X)_\et\to\delta^{p-1}H^1_{dR}(Y)_\et\to 0,\]
and
\[0\to \delta^{p-1}H^1_{dR}(Y)_\et\to H^1_{dR}(Y)_\et[\delta]\to
  M(\Z/p\Z)\to 0.\]
The first of these yields an isomorphism
$\MM_{X,\et}\cong\delta^{p-1}H^1_{dR}(Y)_\et$. Using part (4) of
Proposition~\ref{prop:H^1(Y)}, we obtain the diagram
\[\xymatrix{
    0\ar[r]&M(\Z/p\Z)\ar[r]\ar@{=}[d]&
    H^1_{dR}(X)_\et\ar[r]^{\pi^*}
    &\delta^{p-1}H^1_{dR}(Y)_\et\ar[r]\ar@{=}[d]& 0\\
    0\ar[r]&M(\Z/p\Z)\ar[r]&
    H^1_{dR}(Y)_\et/\delta\ar[u]_{\pi_*}\ar[r]^{\delta^{p-1}}&
    \delta^{p-1}H^1_{dR}(Y)_\et\ar[r]\ar@{=}[u]& 0. 
}  \]
Using the isomorphism $\MM_{X,\et}\cong\delta^{p-1}H^1_{dR}(Y)_\et$
and applying the Diedonn\'e functor yields the exact
sequence~\eqref{eq:GG_Y,et,ker} of Proposition~\ref{thm:G-str-genl-k}
and an identification of it with the exact sequence \eqref{eq:GG_X,et}.  Similarly,
the second exact sequence above yields \eqref{eq:GG_Y,et,coker}.

Part~(2) of Proposition~\ref{prop:H^1(Y)} shows that the subquotients
\[\frac{\delta^iH^1_{dR}(Y)_\et}{\delta^{i+1}H^1_{dR}(Y)_\et}\quad
        \text{for $i=1\dots,p-1$}
        \and
   \frac{H^1_{dR}(Y)_\et[\delta^i]}{H^1_{dR}(Y)_\et[\delta^{i-1}]}\quad
         \text{for $i=2\dots,p$}
       \]
are all isomorphic to one another via a suitable power of $\delta$.
Since $\delta^{p-1}H^1_{dR}(Y)_\et\cong\MM_{X,\et}$, applying the
Dieudonn\'e functor yields the isomorphisms in the first sentence of
part (1) of Theorem~\ref{thm:G-str-genl-k}, and this completes the
proof of this part.

Part~(2) is equivalent to part~(1) by Cartier duality.

For part~(3), note that part~(2) of Proposition~\ref{prop:H^1(Y)}
shows that the subquotients 
\[\frac{\delta^iH^1_{dR}(Y)_{ll}}{\delta^{i+1}H^1_{dR}(Y)_{ll}}\quad
        \text{for $i=1\dots,p$}
        \and
   \frac{H^1_{dR}(Y)_{ll}[\delta^i]}{H^1_{dR}(Y)_{ll}[\delta^{i-1}]}\quad
         \text{for $i=1\dots,p$}
       \]
are all isomorphic to one another via a suitable power of $\delta$.
Moreover, parts~(1) and (3) of Proposition~\ref{prop:H^1(Y)} show that
\[\MM_{X,ll}\cong H^1_{dR}(X)_{ll}\cong H^1_{dR}(Y)_{ll}[\delta],\]
so all of the subquotients above are isomorphic to $\MM_{X,ll}$. Applying
the Dieudonn\'e functor yields part~(3) of
Theorem~\ref{thm:G-str-genl-k}.  This completes the proof of that
theorem. 
\end{proof}

\begin{proof}[Proof of Theorem~\ref{thm:k-point}]
  Let $\HH$ be the $k$ group scheme with Dieudonn\'e module $\NN_Y$ as
  defined in Proposition~\ref{prop:H^1(Y,T)}.  By part (1) of that
  proposition, $\HH$ is $G$-free, and we have equalities
  $\delta^{i}\HH=\HH[\delta^{p-i}]$ for $i=1,\dots,p$.  By part (2),
  there are canonical isomorphisms
  \[      \frac{\delta^{i}\HH}{\delta^{i+1}\HH}\cong
    \frac{\HH[\delta^{p-i}]}{\HH[\delta^{p-i-1}]}\cong
    J_X[p]
        \quad\text{for $i=0,\dots,p-1$.}\]
  Note that
\[M\left(\res_{T/S}\Z/p\Z\right)\cong H^0(Y,\OO_T)\]
where the right hand side is a $k$ vector space on which $F$
acts by the $p$-power Frobenius and $V=0$, and that
\[M\left(\res_{T/S}\mu_p\right)\cong
  H^0\left(Y,\Omega^1_Y(T)/\Omega^1_Y\right)\]
where the right hand side is a $k$ vector space on which $F=0$ and $V$
acts by the Cartier operator (which is essentially the inverse
Frobenius on residues).  Then applying the Dieudonn\'e functor to
part (3) of Proposition~\ref{prop:H^1(Y,T)} yields the exact sequences
asserted in part (3) of Theorem~\ref{thm:k-point}.  This completes the
proof of the theorem.
\end{proof}

\section{Comments on $\D_k[G]$-modules and
  examples}\label{s:comments}

\subsection{Motivation}\label{ss:motivation}
Suppose that $N$ is a finite-dimensional $k$-vector space equipped
with an action of $\D_k$ and/or $G=\Z/p\Z$.  Then $N$ is both Artinian
and Noetherian, so the Krull-Schmidt theorem holds: $N$ is the direct
sum of indecomposable submodules, and the number and isomorphism types
of the summands are uniquely determined. (See, for example,
\cite[\S3.4]{JacobsonBasicAlgebraII}.)

In the case where $N=H^1_{dR}(Y)$ or $\NN_Y$, we have complete
information on $N$ as a $k[G]$-module from part (2) of
Proposition~\ref{prop:H^1(Y)} or part (1) of
Proposition~\ref{prop:H^1(Y,T)} respectively.  If we take
$H^1_{dR}(X)$ as given, then we know the associated graded of $N$ with
respect to the two filtrations attached to the $G$-action by the
proofs of Theorems~\ref{thm:G-str-genl-k} and \ref{thm:k-point} in
Section~\ref{s:first-proofs}.

The basic question that motivates Sections~\ref{s:apps-etale} and
\ref{s:apps-ll} is this: what restrictions on $N$ as a $\D_k$-module
are imposed by the information in the preceding paragraph?  As we will
see below, this question appears to be quite difficult, and we are only
able to give satisfactory answers in a few cases.  In the rest of
this section, we explain some of the difficulties and give examples
illustrating them.

\subsection{$H^1_{dR}(Y)$ is not determined by $H^1_{dR}(X)$}

We show by example that $H^1_{dR}(Y)$ is not determined by
$H^1_{dR}(X)$, even when $k$ is algebraically closed; that is,
$H^1_{dR}(Y)$ is not determined as a $\D_k[G]$-module by its
associated graded $\D_k$-module.

Assume that $k$ is algebraically closed.  Then, as explained in the
proof of part (1) of Proposition~\ref{prop:H^1(Y)}, the data of the
cover $\pi:Y\to X$ and the isomorphism $\gal(Y/X)\cong\Z/p\Z$
determines and is determined by an element in the finite-dimensional
$\Fp$-vector spaces
\[H^1_{et}(X,\Z/p\Z)\cong H^1_{dR}(X)^{F=1}\cong H^1(X,\OO_X)^{F=1}.\]
Multiplying the element by a scalar in $\alpha\in\Fp^\times$
represents the same cover $\pi:Y\to X$ with the isomorphism
$\gal(Y/X)\cong\Z/p\Z$ multiplied by $\alpha$.  The set of unramified
covers $\pi:Y\to X$ (without the isomorphism $\gal(Y/X)\cong\Z/p\Z$)
is thus in bijection with the projective space
$\P\left(H^1(X,\OO_X)^{F=1}\right)$.  For general $k$, elements of
$\P\left(H^1(X,\OO_X)^{F=1}\right)$ correspond to $\kbar$-isomorphism
classes of covers which can be defined over $k$.  (In general, they
are represented by several distinct $k$-isomorphism classes of covers.)

Take $p=3$ and let $X$ be the degree 9 hyperelliptic curve over
$k:=\F_3$ given by
\[X:\qquad y^2 = x^9 + x^4 +x^2 + 1.\]
Then $X$ has genus $g_X = 4$ and $f_X = \nu_X = 2$, and
$H^1(X,\O_X)^{F=1}$ is 2-dimensional over $\F_p$.  The 4 unramified
covers of $X$ over $\kbar$ thus all arise from covers defined over
$k$.  For each, we choose a cover representing it given by
$Y_i:\ z^p - z = f_i$, with $f_i$ in the table below, and for each
cover $Y_i$ we record the EO-type of $\D(J_{Y_i}[p]_{ll})$ (which
determines the isomorphism class of $J_{Y_i}[p]_{ll}$ over $\kbar$,
but not in general over $k$).  For $a\in \F_3$, we also consider the
twisted curve $Y_i(a): z^p-z = f_i + a$ and record the invariant
factors of $F$ acting on $\D(J_{Y_i}(a)[p]_{et})$, which determine its
isomorphism type as a $k[F]$-module.  We emphasize that
$Y_i(a)\simeq Y_i$ over $\kbar$ for each $a$.  This list of invariant
factors will be of the form $(F-1)^{e_1}, (F-1)^{e_2},\ldots$, with
$e_1\le e_2\le\ldots$, and for ease of notation we record it simply as
$e_1,e_2,\ldots$.  Perhaps surprisingly, the isomorphism type varies
among the three possible twists over $k$.

\begin{center}
\begin{tabular}{c|c|c|c|c}
	 $Y_i:\ z^p - z = $ & EO-type of $\D(J_{Y_i}[p]_{ll})$ & \multicolumn{3}{c}{Inv. factors of $F$ on $\D(J_{Y_i(a)}[p]_{\et})$} \\
	\hline 
	                                           &                               &          $ a=0$ & $a=1$ & $a=-1$ \\
	                                           \cline{3-5}
	 $-(x^6 + x^3 + x -1)y $ & $[ 0, 0, 0, 1, 2, 3]$ & $1,1,2$  &  $1,3$  & $1,3$\\
	  $ (x^6 + x + 1)y $ & $[ 0, 0, 1, 1, 2, 3]$ & $1,1,2$  &  $1,3$  & $1,3$\\
	 $(x^3 + 1)y +1$ & $[ 0, 1, 1, 2, 3, 4]$ & $1,3$  &  $1,3$  & $1,1,2$\\
	$(x^6 - x^3 + x)y$ & $[ 0, 1, 1, 2, 3, 4]$ & $1,1,2$  &  $1,3$  & $1,3$\\
\end{tabular}
\end{center}

Note that the three group schemes $J_{Y_i}[p]_{ll}$ for $1\le i\le 3$
are pairwise non-isomorphic over $\kbar$, and that the four $k$-group
schemes $J_{Y_i}[p]$ for $1\le i\le 4$ are pairwise non-isomorphic
over $k$.  However, $J_{Y_3}[p]$ and $J_{Y_4}[p]$ become isomorphic
over $\kbar$.  Note as well that the EO-types of $J_{Y_3}[p]$ and
$J_{Y_4}[p]$ show that the conclusion of Theorem~\ref{thm:h=1-app}
need not hold when $g_X-f_X>1$.

\subsection{Extensions of $BT_1$ modules}\label{ss:extensions}
Propositions~\ref{prop:H^1(Y)} and \ref{prop:H^1(Y,T)} tell us that
the self-dual $BT_1$ module $H^1_{dR}(Y)_{ll}$ is a repeated extension
of the self-dual $BT_1$ module $H^1_{dR}(X)_{ll}$.  In this section,
we make some comments (surely well-known to experts) about how
ill-behaved such extensions may be, even when $k$ is algebraically
closed

More precisely, consider the full subcategory of the category of
$\Dk$-modules whose objects are $BT_1$ modules.  We use freely the
Kraft classification of $BT_1$ modules in terms of cyclic words on the
two letter alphabet $\{f,v\}$.  (See \cite{PriesUlmer21} for an overview.)

It is a simple exercise to check that this category is closed under
extension and quotient in the sense that if
\[0\to M_1\to M\to M_2\to0\]
is an exact sequence of $\Dk$-modules, and if two of $M,M_1,M_2$ are
$BT_1$ modules, then so is the third.
% Use the short exact sequence of complexes with middle term
% \[M\labeledto{V}M\labeledto{F}M.\]
% The middle homology is the obstruction to being a $BT_1$.
% The long exact sequence of homology gives the assertion immediately when
% $M_1$ and $M_2$ are $BT_1$'s.  For the other cases, use
% $\ker V_M=\im F_M$, $\coker F_M=\coim V$, etc., to see that the maps
% before or after the obstructions are onto/into. 

However, this category has the unfortunate property that the image and
kernel of a morphism between $BT_1$ modules need not be $BT_1$
modules.  For example, if $M_{1,1}$ is the module associated to the word
$fv$ (so one generator $e$ and one relation $Fe=Ve$), then the maps of
modules $M\to M$ determined by sending $e$ to $Fe$ has kernel and
image isomorphic to the module $\Dk/(F,V)$, and this is not a $BT_1$.

In \cite{Oort05}, there is a determination of the simple $BT_1$
modules, i.e., those that have no non-trivial $BT_1$ submodule.  It
follows by standard arguments that every $BT_1$ module is an iterated
extension of simple $BT_1$ modules.

Unfortunately, there is no Jordan-H\"older theorem here: The list of
simple $BT_1$ modules appearing in a presentation of a given $M$ as an
extension of simple $BT_1$ modules is not in general uniquely determined.

Here is an example: Let $M$ be the module associated to $f^3v^3$ (one
generator $e$ with relation $F^3e=V^3e$).  Then is is not hard to
check that $M$ is a three-fold extension of $M_{1,1}$.
% (Send $e$ to the standard generator of $M_{1,1}$.  The kernel is
% spanned by $e':=(F-V)e$, $F^2e=Fe'$, $F^3e=F^2e'$, $V^2e=Ve'$, and as
% a $\Dk$ module, is cyclic, generated by $e'$ with relation
% $F^2e'=V^2e'$.  Do this twice more to see that $M$ has simple factors
% $M_{1,1}$ three times.
On the other hand, $M$ also admit a surjection onto the module
$M_{2,1}$ corresponding to $ffv$ (one generator $a$ with relation
$F^2a=Va$) with kernel isomorphic to $M_{1,2}$, the module
corresponding to the word $fvv$ (one generator $b$ with relation
$Fb=V^2b$).  Oort's results imply that $N_{2,1}$ and $N_{1,2}$ are
simple, so we have no uniqueness of ``Jordan-H\"older'' factors.

\subsection{$\D_k[G]$-modules}
We show by example that there may be infinitely many non-isomorphic
$\D_k[G]$-modules with given $\D_k$- and $k[G]$-module structures.  We
first show that (for any $p$), the $\D_k$-module associated to the word
$f^2v^2$ has infinitely many non-isomorphic presentations as an
extension of the module associated to $fv$ by itself.  Recall
\cite[\S3.2]{PriesUlmer21} that $M(fv)$ is given by one generator
$e_1$ and one relation $Fe_1=Ve_1$, and that $M(f^2v^2)$ is given by
one generator $e_2$ and one relation $F^2e_2=V^2e_2$.  A homomorphism
$\phi:M(f^2v^2)\to M(fv)$ is determined by its value on $e_2$ which is
in turn determined by two arbitrary elements of $k$:
\[\phi(e_2)=\alpha e_1+\beta Fe_1,\]
and the surjective homomorphisms are those with $\alpha\neq0$.  The
kernel of a surjective $\phi$ is the subspace spanned by
$\alpha^{1/p}Fe_2-\alpha^pVe_2$ and $F^2e_2$.  One checks that this
subspace is a $\D_k$-module isomorphic to $M(fv)$.  Note that the
kernels of distinct $\phi$ are in general distinct subspaces of
$M(f^2v^2)$; in fact, they are in general not even conjugate by
$\aut_{\D_k}(M(f^2v^2))\cong\F_{p^4}\times k^3$.

Now assume that $p=2$.  For each choice of surjective $\phi$ as above,
we may equip $M=M(f^2v^2)$ with the structure of a free $R=k[G]$ module of
rank 2 in such a way that $M[\delta]=\ker\phi$.  (See
Lemma~\ref{lemma:p=2-R-structure} below.)  This shows that there
are infinitely many non-isomorphic $\D_k[G]$-modules with the same
underlying $R$-module  (namely $R^2$) and the same underlying $\D_k$
module (namely $M(f^2v^2)$).

\section {Applications to the \'etale part of $J_Y[p]$}\label{s:apps-etale}
We turn to a consideration of the \'etale part of $J_Y[p]$.  When $k$
is algebraically closed, $J_Y[p]_\et$ is completely determined by the
isomorphism \eqref{eq:DSet}.  As we will see below, the situation is
more interesting when $k$ is only assumed to be perfect.

\begin{proof}[Proof of Proposition~\ref{prop:splitting}]
Applying the Dieudonn\'e functor to parts (3) and (4) of
Proposition~\ref{prop:H^1(Y)} yields an exact sequence
\[0\to\Z/p\Z\to J_Y[p]_\et/\delta
  \overset{\delta^{p-1}}{\xrightarrow{\hspace*{1cm}}}
    J_Y[p]_\et[\delta]\to\Z/p\Z\to 0,\]
and an identification of $\im\delta^{p-1}$ with $\GG_X$ via the
isomorphism $\pi^*:J_X[p]_\et\to J_Y[p]_\et[\delta]$.  Splitting this
exact sequence into two short exact sequences yields sequences
\eqref{eq:GG_Y,et,ker} and \eqref{eq:GG_Y,et,coker}.  

For part (1), assume that the sequence \eqref{eq:GG_Y,et,ker} splits.
Then we have an inclusion
\[\Z/p\Z\into J_Y[p]_\et[\delta]\into J_Y[p]_\et\]
whose image does not lie in $\im\delta^{p-1}$.  By part (2) of
Theorem~\ref{prop:H^1(Y)}, the image therefore does not lie in the
image of $\delta$.  This shows that the quotient $\QQ$ defined by the
exactness of
\begin{equation}\label{eq:1.5-splits}
0\to\Z/p\Z\to J_Y[p]_\et\to\QQ\to 0  
\end{equation}
is $G$-free.

Conversely, if we have the exact sequence \eqref{eq:1.5-splits} with
$\QQ$ assumed to be $G$-free, then the image of $\Z/p\Z\to J_Y[p]$ is
killed by $\delta$ and not in the image of $\delta$, so it splits
\eqref{eq:GG_Y,et,ker}.  This completes the proof of part (1) of the
proposition.

For part (2), assume that the sequence \eqref{eq:GG_Y,et,coker} splits.
Then we have a surjection
\[J_Y[p]_\et\onto J_Y[p]_\et/\delta\onto\Z/p\Z\]
whose kernel maps surjectively to $\GG_X$ via $\delta^{p-1}$.  This
shows that the subgroup $\KK$ defined by the 
exactness of
\begin{equation}\label{eq:1.6-splits}
0\to\KK\to J_Y[p]_\et\to\Z/p\Z\to 0  
\end{equation}
is $G$-free.

Conversely, if we have the exact sequence \eqref{eq:1.6-splits} with
$\KK$ assumed to be $G$-free, then the surjection
$J_Y[p]_\et\to\Z/p\Z$ factors through $J_Y[p]_\et/\delta$, so it
splits \eqref{eq:GG_Y,et,coker}.  This completes the proof of part (2)
of the proposition.

For part (3), if the exact sequences \eqref{eq:GG_Y,et,ker} and
\eqref{eq:GG_Y,et,coker} both split, then we have the sequences
\eqref{eq:1.5-splits} and \eqref{eq:1.6-splits}.  It then follows
easily from part (2) of Theorem~\ref{prop:H^1(Y)} that the composed
maps
\[\Z/p\Z\to J_Y[p]_\et\to\Z/p\Z\]
and
\[\KK\to J_Y[p]_\et\to\QQ\]
are isomorphisms, so $J_Y[p]_\et$ is isomorphic to the direct sum of
$\Z/p\Z$ and a $G$-free group.  (Moreover, sequences
\eqref{eq:1.5-splits} and \eqref{eq:1.6-splits} both split.)  The
converse is straightforward.

This completes the proof of Proposition~\ref{prop:splitting}
\end{proof}

\begin{s-example}\label{ex:splitting}
  In general, the splitting of exact sequences \eqref{eq:GG_Y,et,ker}
  and \eqref{eq:GG_Y,et,coker} appear to be independent conditions:
  Table \ref{UnrTable} provides several examples of \'etale
  $\Z/3\Z$-covers of genus 3 hyperelliptic curves over $\F_3$ with
  $f_X=2$ exhibiting that all 4 splitting possibilities indeed occur.
  (The additional notations $d_1$, $d_2$, and $\mu$ are explained in
  Example~\ref{ex:extension-bounds} below.)  Note, however, that in
  certain special situations, there are implications: for example when
  $k$ is finite and $\nu_X=1$, the splitting of \eqref{eq:GG_Y,et,ker}
  implies that of \eqref{eq:GG_Y,et,coker} by Theorem
  \ref{thm:et-inequalities} \eqref{thm:et-inequalities:part4}.

\end{s-example}

\begin{center}
\begin{table}
\begin{tabular}{c|c|c|c|c|c|c|c|c}
	$X: y^2 = $ &  $\nu_X$ & $Y: z^3-z = $ & $\nu_Y$ & $\eqref{eq:GG_Y,et,ker}$ split? &  $\eqref{eq:GG_Y,et,coker}$ split? & $d_1$ & $d_2$  & $\mu$  \\
	\hline 
	$x^7 + x^5 + x$ &1 & $(x^4 + x^2 + x)y$ & 1 & yes & yes & 2 & 1 & 3 \\
	$-x^7 - x^3 + x $ & 1 & $xy$ & 1 & yes & yes & 2 & 3 & 2 \\ 	
	$-x^7 - x^3 - 1 $ & 1 & $xy-1$ & 1 & yes & yes & 2 & 3 & 1 \\ 	
	$x^7 + x^6 + x^4 + x$ & 1 &  $(x + 1)y$ & 2 & no & no  & 3 & 1&  \\
	$x^7 + x^6 + x^4 + 1$ & 1 & $(x + 1)y + 1$ & 2 & no & yes & 3 & 1&  \\
	$-x^7 + x^6 + x^4 + x$ & 1 &  $(-x + 1)y$ & 3 & no & yes & 3 & 1 & \\
	$x^8 -x^6 + x^4 - x^2 + 1$ & 2 & $ (x^2 - 1)y - x^6 + x^2 - 1 $ & 2 & yes & yes & 1 & 3 &  \\
	$ -x^7 + x^6 + x^2 + x$ & 2 & $(-x + 1)y$ & 3 & yes & no & 1 & 3 & \\
	$-x^7 + x^6 + x^2 + x$ & 2 & $(-x^4 + x^3 + x - 1)y$ & 3 & yes & yes & 1 & 3 & \\
	$ -x^7 + x^6 + x^2 + x$ & 2 & $(-x^4 + x^3)y$ & 4 & yes & yes & 1 & 1& 
\end{tabular}
	\caption{$p=3$, $f_X=2$}
	\label{UnrTable}
	\end{table}
\end{center}

In some of the proofs below, it will be convenient to use the language
of Galois representations.  Recall (e.g., using
\cite[\S5.8]{PoonenRPV}) that there is an equivalence of categories
between \'etale $p$-group schemes over $k$ and representations of
$\gal(\kbar/k)$ on finite-dimensional $\Fp$-vector spaces.  The
representation associated to a group $\GG$ is the $\Fp$-vector space
$\VV:=\GG(\kbar)$ equipped with the natural action of $\gal(\kbar/k)$.
In particular, when $k$ is finite, $\gal(\kbar/k)$ is pro-cyclic, and
$\GG$ is determined by a single endomorphism of $\VV$, namely
Frobenius.

\begin{proof}[Proof of Theorem~\ref{thm:et-inequalities}]
  First note that if
\[0\to\GG_1\to\GG_2\to\GG_3\to0\]
  is an exact sequence of $p$-torsion group schemes over $k$, then
  \[ \nu(\GG_1)\le\nu(\GG_2)\le\nu(\GG_1)+\nu(\GG_3).\]
  If the sequence splits, then $\nu(G_2)=\nu(\GG_1)+\nu(\GG_3)$.
  
  By part (1) of Theorem~\ref{thm:G-str-genl-k},
  \[J_X[p]_\et\cong J_Y[p]_\et[\delta]\subset J_Y[p]_\et,\]
  so we have $\nu_X\le\nu_Y$.

  Again by part (1) of Theorem~\ref{thm:G-str-genl-k}, we have exact
  sequences
  \[0\to J_Y[p]_\et[\delta^{i-1}]\to
    J_Y[p]_\et[\delta^{i}]\to\GG_X\to0\]
  for $i=2,\dots,p$.

  Applying the observation in the first paragraph of the proof and
  induction on $i$, we find that $\nu_Y\le\nu_X+(p-1)\nu(\GG_X)$.
  Since $\GG_X$ is a subgroup scheme of $J_X[p]$, $\nu(\GG_X)\le\nu_X$
  and we have $\nu_Y\le p\nu_X$.  This establishes part (1).

  If \eqref{eq:GG_X,et} splits, then
  $\nu(\GG_X)=\nu_X-1$ and we find that $\nu_Y\le p(\nu_X-1)+1$.  This
  yields part (2).

  For part (3), if $k$ is algebraically closed, then $\nu_X=f_X$ and
  $f_X\ge 1$ since we have assumed $X$ has an \'etale $\Z/p\Z$ cover.
  Now assume that $k$ is finite.  Note that by
  Definition-Proposition~\ref{def:GG_X}, the representation associated
  to $J_X[p]_\et$ has the trivial representation as a quotient.  This
  is equivalent to saying that the action of Frobenius on
  $\VV=J_X[p](\kbar)$ has 1
   as an eigenvalue.  It follows that $J_X[p](k)$ is non-trivial, i.e.,
  $\nu_X\ge1$.

  If $k$ is finite, \eqref{eq:GG_Y,et,ker} is split, and
  \eqref{eq:GG_Y,et,coker} is non-split, then we claim that $\Fr_k-1$
  can not act bijectively on $\GG_{X,\et}(\kbar)$, where $\Fr_k$
  generates $\Gal(\kbar/k)$.  Indeed, assuming to the contrary that
  $\Fr_k-1$ is bijective on $\GG_{X,\et}(\kbar)$, one sees via the
  snake lemma that in the exact sequence of $\kbar$-points associated
  to \eqref{eq:GG_Y,et,coker}, the image of $\Fr_k-1$ on
  $(J_Y[p]_{\et}/\delta)(\kbar)$ projects isomorphically onto the
  quotient $\GG_{X,\et}(\kbar)$; the inverse of this isomorphism
  splits \eqref{eq:GG_Y,et,coker}, contradicting our hypothesis.  We
  conclude that $\Fr_k$ must have a nontrivial fixed vector on
  $\GG_{X,\et}(\kbar)$.  Since \eqref{eq:GG_X,et} (equivalently
  \eqref{eq:GG_Y,et,ker}) splits by hypothesis, we deduce that the
  space of $\Fr_k$-fixed vectors in $J_X[p]_{\et}(\kbar)$ is at least
  $2$-dimensional, {\em i.e.}~$\nu_X\ge 2$.  This completes the proof
  of part (4) of the theorem.
\end{proof}

\begin{s-example}\label{nubound:sharp} 
  Table \ref{UnrTable} shows that the bounds on $\nu_Y$ in Theorem
  \ref{thm:et-inequalities}
  \eqref{thm:et-inequalities:part1}--\eqref{thm:et-inequalities:part2}
  are sharp: if \eqref{eq:GG_Y,et,ker} is non-split, we must have
  $\nu_X=1$ since $f_X=2$ throughout the table (indeed, the
  alternative is $\nu_X=2=f_X$, in which case $J_X[p]_{\et}$ would be
  completely split, implying the splitting of \eqref{eq:GG_Y,et,ker}).
  This forces $\nu_Y > 1$ by Theorem \ref{thm:f_X=2} (B) and
  Proposition \ref{prop:splitting} (3), which gives the bounds
  $1=\nu_X < \nu_Y \le 3$ in this situation. Lines 4--6 of the table
  show that both possibilities $\nu_Y=2,3$ indeed occur.
 
  When
  $\eqref{eq:GG_Y,et,ker}$ is split, we have
  $\nu_X \le \nu_Y \le 3\nu_X-2$; lines 1--3 show that the unique possibility $\nu_Y=1$
  indeed occurs when $\nu_X=1$, while
  lines 7--10 show that all three
  possibilities $\nu_Y = 2,3,4$ occur when $\nu_X=2$.  
\end{s-example}

\begin{s-example}\label{ex:nu_X=0}
  In part (3), if we do not assume that $k$ is finite, $\gal(\kbar/k)$
  may no longer be procyclic, and it may have $\Fp$-representations
  with trivial quotients, but no trivial subrepresentations.  Here is
  an example of such a representation.

  Take $p=3$, let $F=\Fp(t)$, let $F'=\Fp(v)$, and embed $F\into F'$
  by $t\mapsto (v^3-v)^2$.  Then one checks that $F'/F$ is Galois with
  group $S_3$.  Let $k$ and $k'$ be the perfections of $F$ and $F'$
  respectively.  Then we have
  \[\gal(\kbar/k)\onto\gal(k'/k)\cong S_3.\]

  Let $\gamma_1$ and $\gamma_2$ be two involutions generating $S_3$,
  and let $S_3$ act on $\VV:=\F_3^2$ by the matrices
  \[\gamma_1\mapsto\psmat{-1&1\\0&1}\and
    \gamma_2\mapsto\psmat{-1&0\\0&1}.\]

  Then one sees easily that the resulting representation of
  $\gal(\kbar/k)$ has the trivial representation as a quotient, but it
  does not have the trivial representation as a sub.  Thus the
  corresponding group scheme $\GG$ admits a surjection to $\Z/p\Z$,
  but has $\nu(\GG)=0$.
\end{s-example}

To prove Theorems~\ref{thm:et-extensions} and \ref{thm:f_X=2}, we will
use an approach via coordinates, so we begin with some preliminaries
on $\Fp[G]$-modules.

Let $S=\Fp[G]\cong\Fp[\delta]/(\delta^p)$, and for $i=1,\dots,p$, let
$W_i=S/(\delta^i)$ be the $i$-dimensional indecomposable module over
$S$.  Write $a\mapsto\overline a$ for the natural reduction
homomorphism $S\to k$ (the quotient modulo $\delta$).  We have natural
identifications
$\Hom_S(W_p,W_p)\cong S$,
$\Hom_S(W_p,W_1)\cong \Fp$,
$\Hom_S(W_1,W_p)\cong \Fp$,
$\Hom_S(W_1,W_1)\cong \Fp$.
Under these identifications, the maps
\[\Hom_S(W_p,W_p)\to\Hom_S(W_p[\delta],W_p[\delta])=\Hom_S(W_1,W_1)\]
and
\[\Hom_S(W_p,W_p)\to\Hom_S(W_p/\delta,W_p/\delta)=\Hom_S(W_1,W_1)\]
are both the reduction map $S\to k$.  The restriction map
\[\Hom_S(W_p,W_1)\to \Hom_S(W_p[\delta],W_1)=\Hom_S(W_1,W_1)\]
is zero, as is the reduction map
\[\Hom_S(W_1,W_p)\to \Hom_S(W_1,W_p/\delta)=\Hom_S(W_1,W_1).\]
% Finally, the composition map
% \[\Hom_S(W_p,W_1)\times \Hom_S(W_1,W_p)\to \Hom_S(W_1,W_1)\qquad
%   (\alpha,\beta)\mapsto \alpha\circ\beta\]
% is zero, whereas the composition map
% \[\Hom_S(W_1,W_p)\times \Hom_S(W_p,W_1)\to \Hom_S(W_p,W_p)\qquad
%   (\beta,\alpha)\mapsto \beta\circ\alpha\]
% is the usual multiplication in $\Fp$ composed with the map $\Fp\to S$
% which sends 1 to $\delta^{p-1}$.  (There are other composition maps to
% identify, but they are straightforward and left to the reader.)

Now let $\VV_X$ and $\VV_Y$ be the representations of
$\gal(\kbar/k)$ associated to $J_X[p]_\et$ and
$J_Y[p]_\et$:
\[\VV_X=J_X[p]_\et(\kbar)\and \VV_Y=J_Y[p]_\et(\kbar).\]
By part (2) of Proposition~\ref{prop:H^1(Y)}, we have a
(non-canonical) isomorphism of $S$-modules
\[\VV_Y\cong W_1\oplus W_p^{f_X-1}.\]
Choosing such an isomorphism, we may represent the action of
$\phi\in\gal(\kbar/k)$ by a matrix of the form
\begin{equation}\label{eq:phi-matrix}
\left(
    \begin{array}{c|c}
      a_0&\alpha_1\,\cdots\,\alpha_r\\
      \hline
      \beta_1&\\
      \vdots&A=(a_{ij})\\
      \beta_r&
    \end{array}
  \right),  
\end{equation}
where $r=f_X-1$, $a_0\in\Hom_S(W_1,W_1)$,
$\alpha_j\in\Hom_S(W_p,W_1)$, $\beta_i\in\Hom_S(W_1,W_p)$,
$a_{ij}\in \Hom_S(W_p,W_p)$, and $i$ and $j$ run from 1 to $r$.

Using the observations above on $S$-homomorphisms, we find that the
induced action of $\phi$ on $\VV_Y[\delta]\cong \VV_X$ is given by the
matrix
\begin{equation}\label{eq:phi-on-ker}
 \left(
    \begin{array}{c|c}
      a_0&0\,\cdots\,0\\
      \hline
      \beta_1&\\
      \vdots&\overline A=(\overline a_{ij})\\
      \beta_r&
    \end{array}
  \right),
\end{equation}
and the induced action of $\phi$ on $\VV_Y/\delta$ is given by the matrix
\[\left(
    \begin{array}{c|c}
      a_0&\alpha_1\,\cdots\,\alpha_r\\
      \hline
      0&\\
      \vdots&\overline A=(\overline a_{ij})\\
      0&
    \end{array}
  \right).\]
The block triangular structure of the last two displayed matrices
reflects the exact sequences \eqref{eq:GG_Y,et,ker} and
\eqref{eq:GG_Y,et,coker} (i.e., the fact that $\Z/p\Z$ is a quotient
of $\VV_Y[\delta]$ and a sub of $\VV_Y/\delta$), and we find that $a_0=1$.
Moreover, \eqref{eq:GG_Y,et,ker} splits if and only if we may choose
the isomorphism $\VV_Y\cong W_1\oplus W_p^r$ such that the $\beta_i$ all
vanish, and \eqref{eq:GG_Y,et,coker} splits if and only if we may choose
the isomorphism such that the $\alpha_j$ all
vanish.  (This gives an alternate proof of
Proposition~\ref{prop:splitting}.)

\begin{lemma}\label{lemma:1-units}
  Define $U$, the 1-units of $\GL_r(S)$, by the exactness of
  \[0\to U\to \GL_r(S)\to\GL_r(\Fp)\to 0,\]
  where the surjection is $(a_{ij})\mapsto (\overline a_{ij})$.  Then
  the group $U$ has exponent $p$.
\end{lemma}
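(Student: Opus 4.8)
The plan is to describe $U$ completely explicitly and then reduce to the characteristic-$p$ binomial theorem. Write $\m=(\delta)$ for the maximal ideal of the local $\Fp$-algebra $S=\Fp[\delta]/(\delta^p)$, so that $\m^p=0$, and let $M_r(\m)\subset M_r(S)$ be the two-sided ideal of matrices with all entries in $\m$. First I would check that the kernel $U$ of the reduction map $\GL_r(S)\to\GL_r(\Fp)$ is exactly the set of matrices $I+N$ with $N\in M_r(\m)$: such an $I+N$ lies in $\GL_r(S)$ because $N$ is nilpotent (see below), and conversely any element of $\GL_r(S)$ reducing to the identity is, entry by entry, congruent to $I$ modulo $\m$, i.e.\ differs from $I$ by an element of $M_r(\m)$.

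Next I would record the nilpotence statement: for $N\in M_r(\m)$ one has $N^p=0$. This is immediate from the fact that matrix multiplication carries the entries of $N^k$ into $\m^k$ for every $k\ge1$, so the entries of $N^p$ lie in $\m^p=0$. Granting this, fix $N\in M_r(\m)$ and expand $(I+N)^p$ in the associative ring $M_r(S)$. Since $I$ is central it commutes with $N$, so the binomial theorem applies and
\[
(I+N)^p=\sum_{k=0}^{p}\binom{p}{k}N^k=I+N^p=I,
\]
because $\binom{p}{k}$ vanishes in $\Fp\subseteq S$ for $0<k<p$ and $N^p=0$. Hence every element of $U$ is killed by $p$, i.e.\ $U$ has exponent dividing $p$; and when $r\ge1$ it is nontrivial — for instance the matrix $N$ with a single entry equal to $\delta$ gives a nontrivial $I+N\in U$ with $(I+N)^p=I$ — so the exponent is exactly $p$.

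The only point worth flagging is that $M_r(S)$ is noncommutative, so one is not entitled in general to write $(x+y)^p=x^p+y^p$; the argument above uses only that $I$ is central, which is precisely the hypothesis the binomial theorem requires, so there is no genuine obstacle here.
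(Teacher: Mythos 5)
Your proof is correct and takes essentially the same route as the paper: both write an element of $U$ as the identity plus a matrix with entries in $(\delta)$, invoke the characteristic-$p$ binomial theorem (valid since $I$ is central), and kill the $p$-th power of the error term by nilpotence — the paper does this by factoring the error as $\Delta M$ with $\Delta$ central and $\Delta^p=0$, while you observe directly that the entries of $N^k$ lie in $(\delta)^k$. Your added remark that the exponent is exactly $p$ (not just dividing $p$) is a harmless and correct refinement.
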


\begin{proof}
  Write an element of $U$ in the form $I+\Delta M$ where $\Delta$ is
  the diagonal matrix with all diagonal entries equal to $\delta$ and where
  $M\in M_r(S)$.  Since $I$ and $\Delta$ are in the center of the
  characteristic $p$ ring $M_r(S)$, we have
  \[\left(I+\Delta M\right)^p=I+\Delta^pM^p=I.\]
\end{proof}

With these preliminaries, we are ready to prove the two remaining
theorems about $J_Y[p]_\et$.

\begin{proof}[Proof of Theorem~\ref{thm:et-extensions}]
  For part (1), note that if $\phi$ has matrix of the shape
  \eqref{eq:phi-on-ker} with $a_0=1$, then by an inductive argument,
  $\phi^n$ has matrix 
\[\left(
    \begin{array}{c|c}
      1&0\,\cdots\,0\\
      \hline
      n\beta_1&\\
      \vdots&\overline A^n\\
      n\beta_r&
    \end{array}
  \right).
\]
Thus there is a finite Galois extension $k'$ of $k$ with $\gal(k'/k)$
of exponent $p$ such that the representation of $\gal(\kbar/k')$ on
$\VV_X=\VV_Y[\delta]$ has the trivial representation as a direct factor,
i.e., such that the sequences \eqref{eq:GG_X,et} and
\eqref{eq:GG_Y,et,ker} split over $k'$.

For part (2), to say that $J_X[p]_\et$ is completely split is to say
that for every $\phi\in\gal(\kbar/k)$, the matrix of $\phi$ is of
the form \eqref{eq:phi-matrix} with $a_0=1$, $\beta_i=0$ for all $i$,
and $\overline A=I$, i.e., with $A\in U$.  Lemma~\ref{lemma:1-units}
then shows that $\phi^p$ acts trivially on $\VV_Y$.  This shows
that there is a  a finite Galois extension $k'$ of $k$ with $\gal(k'/k)$
of exponent $p$ such that $J_Y[p]_\et$ is completely split over $k'$.

This completes the proof of the theorem.
\end{proof}

\begin{s-example}\label{ex:extension-bounds}
	Table \ref{UnrTable} (in which each base curve $X$ has $f_X=2$) illustrates that
	the degree bounds of Theorems \ref{thm:et-extensions} and \ref{thm:f_X=2} are sharp:
	in the antepenultimate column we have listed the degree $d_1$ of the unique 
	minimal extension $k_1/k$ with the property that $J_X[p]_{\et}$ is completely split
	over $k_1$.  The penultimate column lists the degree $d_2$ of the minimal 
	extension $k_2 / k_1$ over which $J_Y[p]_{\et}$ is completely split.
	When $\nu_X=\nu_Y=1$, the final column gives the positive integer $\mu$
	specified in (1a) of Theorem \ref{thm:f_X=2}; note that all possibilities indeed occur.
	The first three lines of the table provides examples in which the extension $k'$
	guaranteed by (1a) of Theorem  \ref{thm:f_X=2} is as large as possible; in the first line $k''=k$, while
	in the second and third lines $k''$ is the unique degree $p=3$ extension of $k$. 
\end{s-example}

\begin{proof}[Proof of Theorem~\ref{thm:f_X=2}]
  For part (A), if $f_X=1$, then in equation~\eqref{eq:GG_X,et},
  $\GG_X=0$ and $J_X[p]_\et\cong\Z/p\Z$.  It then follows from part
  (1) of Theorem~\ref{thm:G-str-genl-k} that $J_Y[p]_\et\cong\Z/p\Z$.

  Now consider parts (B) and (C).  The alternatives are mutually exclusive, and
  they exhaust the possibilities, so it will suffice to verify the
  additional claims in each case.  Note that the subgroup
  $\GG_{X,\et}$ defined by exact sequence \eqref{eq:GG_X,et}
  corresponds to a line in $\VV_X$ which is invariant under
  $\gal(\kbar/k)$.  Let $\phi\in\gal(\kbar/k)$ be Frobenius, and let
  $\rho\in\Fp^\times$ be the eigenvalue of $\phi$ on this line.

  (Case (1a)) If $\rho\neq1$, then $p>2$ since $\F_2^\times=\{1\}$.
  Moreover, both \eqref{eq:GG_Y,et,ker} and \eqref{eq:GG_Y,et,coker}
  are split (by taking the kernel or image of a high power of
  $\phi-\rho$).  Thus, we may choose the isomorphism
  $\VV_Y\cong W_1\oplus W_p$ so that $\phi$ has the shape
  \[\left(
    \begin{array}{c|c}
      1&0\\
      \hline
      0&a
    \end{array}
  \right),\]
where $a\in S$ has $\overline a=\rho$.  It is then clear that
$\nu_X=\nu_Y=1$ and we are in case (1a).  The group scheme $\QQ$ in
the statement is the one corresponding to the representation of
$\gal(\kbar/k)$ on $W_p$ with Frobenius acting by $a$.  Since
$\overline a^{p-1}=\rho^{p-1}=1$, over an extension $k'$ of $k$ of
degree dividing $p-1$, $\phi$ acts on $W_p$ via a 1-unit, so has a
non-trivial space of invariants.  This shows that $|\QQ(k')|=p^\mu$
with $1\le \mu\le p$.  Since $a$ is $\rho$ times a 1-unit,
Lemma~\ref{lemma:1-units} shows that
$a^{p}=\rho^p=\rho\in k\subset S$, so that over the extension $k''$ of
degree $p$, $\rho$ acts on $W_p$ by $\rho$, and $\GG\cong(\GG')^p$ for
a rank 1, non-split group scheme $\GG'$.  Finally, $\phi^{p(p-1)}$
acts trivially on $\VV_Y$, so $J_Y[p]_\et$ is completely split over an
extension of degree dividing $p(p-1)$.

(Case (1b), $p>2$, and Case (1), $p=2$)
Next, suppose that $\rho=1$ and \eqref{eq:GG_X,et} is not split.
Then $\GG_X\cong\Z/p\Z$ and $\nu_X=1$.  If $p=2$, this is enough to
conclude that we are in case (1).  
Since $J_X[p]_\et$ is not completely split, $J_Y[p]_\et$ is also not
completely split and $\nu_Y<p+1$. 
The action of $\phi$ is given
by a matrix of the form
  \[\left(
    \begin{array}{c|c}
      1&\alpha\\
      \hline
      \beta&a
    \end{array}
  \right),\]  
where $\beta\neq0$.   If $p>2$, we
consider the matrix of $\phi$ with respect to a suitable $k$-basis
of $W_1\oplus W_p$, namely 1 for $W_1$ and
$\delta^{p-1},\delta^{p-2},\dots,1$ for $W_p$.  Then $\phi$ takes the
form
  \[\left(
    \begin{array}{c|cccc}
      1&0&\dots&0&\alpha\\
      \hline
      \beta&1&*&*&*\\
      0&0&1&*&*\cr
       \vdots&\vdots&\ddots&1&*\cr
       0&0&\dots&0&1                       
    \end{array}
  \right).\]  
It is then visible that $\phi-1$ has rank $<p$, so $\nu_Y>1$.  Thus
we are in case (1b).  For any $p$, an inductive argument shows that
$\phi^n$ has 
matrix
  \[\left(
    \begin{array}{c|c}
      1&n\alpha\\
      \hline
      n\beta&(1+2+\cdots+(n-1))\beta\alpha +a^n
    \end{array}
  \right).\]
Applying Lemma~\ref{lemma:1-units} shows that if $p>2$,
then $J_Y[p]_\et$ is completely split over the extension of $k$ degree
$p$, and if $p=2$, then $J_Y[p]_\et$ is completely split over the
extension of $k$ degree dividing $4$.

(Case (2)) Finally, if $\rho=1$ and \eqref{eq:GG_X,et} splits, then
$\nu_X=2$, $J_X[p]_\et$ splits completely, and we are in case (2) (for
any $p$).  The conclusions there follow from
Theorem~\ref{thm:et-inequalities} and part (2) of
Theorem~\ref{thm:et-extensions}.  We can say a bit more about the
structure of $J_Y[p]_\et$ in this case: By part (1) of
Proposition~\ref{prop:splitting}, there is an exact sequence
\[0\to\Z/p\Z\to J_Y[p]_\et\to\QQ\to0,\]
where $\QQ$ is $G$-free of rank 1.  We have $\nu(\QQ)\ge1$, and both
the extension above and the group scheme $\QQ$ split completely over
an extension of degree dividing $p$.

This completes the proof of Theorem~\ref{thm:f_X=2}.
\end{proof}

\begin{s-example}\label{Example:fXeq2}
  Taking into account Theorem \ref{thm:et-inequalities}
  \eqref{thm:et-inequalities:part2}, which implies that when $k$ is
  finite and $\nu_X=1$, the splitting of \eqref{eq:GG_Y,et,ker}
  implies that of \eqref{eq:GG_Y,et,coker}, we see that Table
  \ref{UnrTable} exhibits that all possibilities specified by Theorem
  \ref{thm:f_X=2} indeed occur when $p=3$ and $k=\F_3$.
\end{s-example}

\subsection{Dependence of $\HH$ and $\NN_Y$ on
  $S$}\label{ss:S-dependence}
The group scheme $\HH$ in Theorem~\ref{thm:k-point} and its
Dieudonn\'e module $\NN_Y$ (analyzed in
Proposition~\ref{prop:H^1(Y,T)}) apparently depend on the choice of a
rational point $S$.  Indeed, the subquotients
$\Ker\left(\res_{T/S}\Z/p\Z\to\Z/p\Z\right)$ and
$\coker\left(\mu_p\to\res_{T/S}\mu_p\right)$ of $\HH$ and their
Dieudonn\'e modules
\[\coker\left(k=H^0(Y,\OO_Y)\to H^0(Y,\OO_T)\right)\and
  \Ker\left(H^0(Y,\Omega^1_Y(T)/\Omega^1_Y) \to
    H^1(Y,\Omega^1_Y)=k\right)\]
visibly depend on whether $S$ splits in $Y$ (and more precisely on the
class of $T=\pi^{-1}(S)$ in $H^1(S,\Z/p\Z)$).

\begin{prop}
  If $k$ is algebraically closed, then the isomorphism class of
  $\NN_Y$ as a $\D_k[G]$-module is independent of the
  choice of the rational point $S$.
\end{prop}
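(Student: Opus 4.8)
The plan is to establish the stronger fact that each summand of the canonical decomposition
\[\NN_Y\cong\NN_{Y,\et}\oplus\NN_{Y,m}\oplus\NN_{Y,ll}\]
is pinned down, up to $\D_k[G]$-isomorphism, by numerical invariants that make no reference to $S$; recall from Section~\ref{s:Dieu} that this is a decomposition of $\D_k[G]$-modules, since $G$ commutes with $F$ and $V$.

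The local-local part requires no work and no hypothesis on $k$: part~(3) of Proposition~\ref{prop:H^1(Y,T)} furnishes a $\D_k[G]$-isomorphism $\NN_{Y,ll}\cong H^1_{dR}(Y)_{ll}$, and the target manifestly depends only on $Y$.

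For the \'etale part, note first that $\NN_{Y,\et}$ is a $\D_k[G]$-direct summand of $\NN_Y$, which by part~(1) of Proposition~\ref{prop:H^1(Y,T)} is free of rank $2g_X$ over $k[G]\cong k[\delta]/(\delta^p)$. Since $k[G]$ is a local Artinian ring, every finitely generated projective $k[G]$-module is free, so $\NN_{Y,\et}$ is $k[G]$-free, of rank $\tfrac1p\dim_k\NN_{Y,\et}$. Substituting $\deg T=p$ (valid since $k=\kbar$, so $S$ splits completely in $Y$) and the Deuring--Shafarevich relation $f_Y-1=p(f_X-1)$ into the exact sequence
\[0\to H^0(Y,\OO_Y)\to H^0(Y,\OO_T)\to\NN_{Y,\et}\to H^1_{dR}(Y)_\et\to0\]
of part~(3) of Proposition~\ref{prop:H^1(Y,T)} gives $\dim_k\NN_{Y,\et}=(p-1)+f_Y=pf_X$, so $\NN_{Y,\et}$ is $k[G]$-free of rank $f_X$, which is independent of $S$. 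It then remains to record the elementary fact that over an algebraically closed field an \'etale $\D_k[G]$-module $N$ whose underlying $k[G]$-module is free of rank $r$ is isomorphic to $M(\Z/p\Z)\otimes_{\Fp}\Fp[G]^{r}$. I would argue this by passing to $F$-fixed points: for \'etale $N$ over $k=\kbar$ the $\Fp$-subspace $N^{F=1}$ satisfies $N=k\otimes_{\Fp}N^{F=1}$, the $G$-action on $N$ is the scalar extension of the action on $N^{F=1}$, and $N\rightsquigarrow N^{F=1}$ is an equivalence from \'etale $\D_k[G]$-modules to $\Fp[G]$-modules. Decomposing $N^{F=1}\cong\bigoplus_i W_{j_i}$ over $\Fp[G]$ gives $N\cong\bigoplus_i V_{j_i}$ over $k[G]$ by way of $k\otimes_{\Fp}W_j\cong V_j$, so $k[G]$-freeness of rank $r$ forces $N^{F=1}\cong\Fp[G]^{r}$ by the Krull--Schmidt theorem, and the claim follows. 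Hence $\NN_{Y,\et}\cong M(\Z/p\Z)\otimes_{\Fp}\Fp[G]^{f_X}$, independently of $S$.

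For the multiplicative part, comparing $k[G]$-ranks in the decomposition above (where $\NN_Y$ is free of rank $2g_X$, $\NN_{Y,\et}$ is free of rank $f_X$, and $\NN_{Y,ll}\cong H^1_{dR}(Y)_{ll}\cong V_p^{2(g_X-f_X)}$ is free of rank $2(g_X-f_X)$ by part~(2) of Proposition~\ref{prop:H^1(Y)}) shows $\NN_{Y,m}$ is $k[G]$-free of rank $f_X$. Applying the Cartier dual of the assertion just proved to the multiplicative module $\NN_{Y,m}$ gives $\NN_{Y,m}\cong M(\mu_p)\otimes_{\Fp}\Fp[G]^{f_X}$, again independent of $S$. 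Assembling the three summands shows $\NN_Y$ is determined up to $\D_k[G]$-isomorphism with no reference to $S$. The one step that deserves genuine care is the equivalence invoked for \'etale $\D_k[G]$-modules over $\kbar$: one must check that $N\rightsquigarrow N^{F=1}$ respects the $G$-action and converts $k[G]$-freeness into $\Fp[G]$-freeness exactly as asserted; once that is granted, the remainder is a dimension count together with the Krull--Schmidt theorem.
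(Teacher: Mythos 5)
Your proof is correct and takes essentially the same route as the paper's: decompose $\NN_Y$ into its \'etale, multiplicative, and local-local parts, observe that the local-local part is $H^1_{dR}(Y)_{ll}$ and hence independent of $S$, identify the \'etale part over $\kbar$ as $M(\Z/p\Z)\otimes_{\Fp}\Fp[G]^{f_X}$ from complete splitness together with $k[G]$-freeness, and handle the multiplicative part by Cartier duality. The extra details you supply (freeness of the \'etale summand via projectivity over the local ring $k[G]$, the dimension count $pf_X$ via Deuring--Shafarevich, and the $F$-fixed-point argument) are correct elaborations of steps the paper leaves implicit.
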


\begin{proof}
  The exact sequences \eqref{eq:NN-dev1} and \eqref{eq:NN-dev2} show
  that the local-local part of $\NN_Y$ is independent of $S$ (without
  any hypothesis on $k$).  If $k$ is algebraically closed, the \'etale
  part of $\NN_Y$ is completely split as a $\D_k$-module (isomorphic to
  $M(\Z/p\Z)^{pf_X}$) and by Proposition~\ref{prop:H^1(Y,T)}, it is
  $G$-free of rank $f_X$, so it is isomorphic to
  \[M(\Z/p\Z)^{f_X}\tensor\Fp[G],\]
  and is thus independent of $S$.  The same follows for the
  multiplicative part by Cartier duality.  Since $\NN_Y$ is the direct
  sum of its \'etale, multiplicative, and local-local parts, this
  establishes the proposition.
\end{proof}

\begin{s-example}\label{NY:Dependence}
  Surprisingly, when $k$ is finite, $\NN_Y$ depends on $S$, even when
  $S$ splits in $Y$.  To emphasize the subtlety of this dependence on
  $S$ and its fiber $T:=\pi^{-1}(S)$ in $Y$, let us write $\NN_Y(T)$
  in place of $\NN_Y$.  Consider the smooth projective genus 5
  hyperelliptic curve over $k=\F_3$ given by the affine equation
$$
X:\quad	y^2 + x^{12} + x^{10} -x^9 + x^6 + x^4 -x^2 + x -1 = 0.
$$
This curve has $a$-number 1 and arithmetic $p$-rank $\nu_X=2$,
so in particular has two independent unramified $\Z/3\Z$-covers.
One such cover $Y$ is given by the Artin--Schreier equation $z^3-z=f$
where $f$ is the rational function
\begin{multline*}
f= \frac{x^9 - x^7 - x^6 + x^5 + x^4 + x^3 + x^2 - x + 1}{x^{15}}y  \\
    - \frac{x^{15} + x^{14} - x^{13} + x^{12} + x^{11} - x^{10} + x^9 + x^6 - x^3 +1}{x^{15}}. 
\end{multline*}
The projective curve $X$ has exactly six $k$-rational points $(x,y)$:
$$
	S_1:= (0, -1), S_2:=(0, 1), S_3:=(-1, -1), S_4:=(-1, 1), S_5:=(1, -1), S_6:=(1, 1);
$$
note that the point at infinity on $X$ is of degree 2.
Let $\pi:Y\rightarrow X$ be the covering map and $T_i:=\pi^{-1}S_i$ be the fiber in $Y$ over $S_i$.
Each $T_i$ is gives a $\Z/3\Z$-torsor (for the \'etale topology) over $k$, so gives a class in $H^1_{\et}(\Spec(k),\Z/p\Z)$.
Via the canonical identifications of abelian groups
$$
	H^1_{\et}(\Spec(k),\Z/p\Z) \simeq H^1(\Gal(\overline{k}/k),\Z/p\Z) \simeq k/\wp k =\Z/3\Z
$$
each torsor $T_i$ gives a class $[T_i]\in \Z/3\Z$.  For example, 
$[T_1]=0=[T_3]$ as each of $T_1$ and $T_3$ consist of 3 distinct 
$k$-rational points of $Y$, whereas $[T_i]=\pm 1$ for $i=2,4,5,6$
since for these values of $i$, the fiber $T_i$ is a single degree 3 point on $Y$.

Using {\sc Magma}, we compute the matrix of $V$ acting on the spaces 
$H^0(\Omega^1_Y(T_i))$ for $i=1,\ldots ,6$.  We also compute the action of the Artin--Schreier automorphism of $Y\rightarrow X$
on the residue field of $T_i$, which determines $[T_i]$, and obtain the following  table:

\begin{center}
\begin{tabular}{r||rrrrrr}
	$i$ & 1 & 2 & 3 & 4 & 5 & 6\\
	\hline
	$[T_i]$ & 0 & 1 & 0 & 1 & -1 & -1 \\
	$\dim \ker (V-1)$ & 4& 3& 4& 3 & 3 & 3\\
		$\dim \ker (V-1)^3$  & 9 & 9 & 8 & 8 & 8 & 8  \\
\end{tabular}
\end{center}
It follows from this that the four $k[V]$-modules
$H^0(\Omega^1_Y(T_i))=\NN_Y(T_i)[F]$ 
for $1\le i\le 4$ are pairwise non-isomorphic.  For $i=1,3$ we have short
exact sequences of $k[V]$-modules
\begin{equation*}
	\xymatrix{
		0 \ar[r] & {H^0(Y,\Omega^1_Y)} \ar[r] & {H^0(Y,\Omega^1_Y(T_i))} \ar[r] & \left(\displaystyle\frac{k[V]}{(V-1)}\right)^2 \ar[r] & 0
	}
\end{equation*}
while for $i=2,4$ we have short exact sequences
\begin{equation*}
	\xymatrix{
		0 \ar[r] & {H^0(Y,\Omega^1_Y)} \ar[r] & {H^0(Y,\Omega^1_Y(T_i))} \ar[r] & \displaystyle\frac{k[V]}{(V-1)^2} \ar[r] & 0
	}
\end{equation*}
Noting that the kernel of $V-1$ on $H^0(\Omega^1_Y)$ has dimension 3,
our computations show that the above exact sequences are not only {\em non}-split for $1\le i\le 4$, 
but that the two {\em extension} classes of $k[V]$-modules provided by $H^0(\Omega^1_Y(T_i))$
for $i=1,3$ (respectively $i=2,4$) are non-isomorphic!  This is rather surprising, as all four $k[V]$-modules
$H^0(\Omega^1_Y(T_i))$ for $1\le i\le 4$ become isomorphic after a finite extension of the ground field.
We conclude that the $\D_k$ modules $\NN_Y(T_i)$ are non-isomorphic for $1\le i\le 4$.
On the other hand, further computation shows that $\NN_Y(T_4)\simeq \NN_Y(T_5)\simeq \NN_Y(T_6)$
as $\D_k$-modules, which is again somewhat surprising as the torsors $T_4$ and $T_5$
are non-isomorphic, while the torsors $T_1, T_3$ {\em are} isomorphic, as are $T_2,T_4$.
Again, all six Dieudonn\'e modules $\NN_Y(T_i)$ become isomorphic after 
a suitable finite extension on $k$.
\end{s-example}

\section {Applications to $J_Y[p]$: The local-local
  part}\label{s:apps-ll}
In this section, we consider $J_Y[p]_{ll}$ and its Dieudonn\'e module
$H^1_{dR}(Y)_{ll}$  Throughout, we assume $k=\kbar$.
We view the local-local part of $H^1_{dR}(X)$ as
given, and we exploit the $G$-module structure on $H^1_{dR}(Y)_{ll}$ to
find restrictions on its structure as a $\D_k$-module.  We begin by
recording the basic properties of $H^1_{dR}(Y)_{ll}$.

\begin{prop}\label{prop:M-props}
  Write $M$ for $H^1_{dR}(Y)_{ll}$ and let $h=g_X-f_X$.  We have
  \[M[\delta]\cong M/\delta\cong H^1_{dR}(X)_{ll}.\]
  Moreover, $M$ has the following properties:
\begin{enumerate} 
\item $M$ is a free $k[G]$-module of rank $2h$.
\item $M$ is a self-dual, local-local $BT_1$ module, i.e.,
  $\im F=\Ker V$, $\im V=\Ker F$, $F$ and $V$ act nilpotently on $M$,
  and $M$ admits a perfect $k$-bilinear pairing $\<\cdot,\cdot\>$
  which is alternating \textup{(}$\< m,m\>=0$ for all
  $m\in M$\textup{)} and which satisfies $\< Fm,n\>=\< m,Vn\>^p$ for
  all $m,n\in M$.
\item The pairing is compatible with the $G$ action in that
  $\< \gamma m,\gamma n\>=\< m,n\>$ for all $m,n\in M$.
\item $\im F=\Ker V$ and $\im V=\Ker F$ are free submodules of $M$ of rank $h$.
\end{enumerate}
\end{prop}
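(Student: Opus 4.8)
The plan is to deduce all four parts, together with the displayed isomorphisms $M[\delta]\cong M/\delta\cong H^1_{dR}(X)_{ll}$, from Proposition~\ref{prop:H^1(Y)} and the general Dieudonn\'e/$BT_1$ formalism of Section~\ref{s:Dieu}; almost nothing new needs to be proved. For part~(1), part~(2) of Proposition~\ref{prop:H^1(Y)} gives a $k[G]$-module isomorphism $M=H^1_{dR}(Y)_{ll}\cong V_p^{2h}$ (recall $h=g_X-f_X=h_X$), and since $V_p=k[\delta]/(\delta^p)\cong k[G]$ this says $M$ is free of rank $2h$. Freeness together with Lemma~\ref{lemma:G-free} then gives $M[\delta]=\delta^{p-1}M$ and a $\D_k$-module isomorphism $M/\delta M\isoto M[\delta]$ induced by $\delta^{p-1}$ (which commutes with $F$ and $V$), while part~(3) of Proposition~\ref{prop:H^1(Y)} supplies the $\D_k$-isomorphism $\pi^*\colon H^1_{dR}(X)_{ll}\isoto H^1_{dR}(Y)_{ll}[\delta]=M[\delta]$; together these give the displayed isomorphisms.

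For part~(2) I would invoke Oda's theorem (Section~\ref{s:Dieu}): $H^1_{dR}(Y)\cong M(J_Y[p])$ is a self-dual $BT_1$ $\D_k$-module, hence so is its local-local summand $M$. Unwinding the definitions gives exactly the listed properties --- nilpotence of $F$ and $V$, the identities $\im F=\Ker V$ and $\im V=\Ker F$, and the fact that the cup-product pairing on $H^1_{dR}(Y)$ (perfect, alternating, and satisfying $\<Fm,n\>_Y=\<m,Vn\>_Y^p$) restricts to a perfect pairing on $M$ with the same properties, because $H^1_{dR}(Y)_{\et}$ and $H^1_{dR}(Y)_m$ are dual to one another and orthogonal to $M$; this is essentially Remark~\ref{rem:dualities}(1). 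For part~(3), functoriality of cup product gives $\<\gamma m,\gamma n\>_Y=\gamma^*\<m,n\>_Y$ in $H^2_{dR}(Y)$, and $\gamma^*$ acts as the identity on the line $H^2_{dR}(Y)$ because $\pi\circ\gamma=\pi$ and $\pi_*\colon H^2_{dR}(Y)\isoto H^2_{dR}(X)$; restricting to $M$ gives the $G$-invariance of the pairing.

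The real content is part~(4). By part~(2) we have $\im F=\Ker V$ and $\im V=\Ker F$ inside $M$, so the task is to identify these submodules as $k[G]$-modules, and the key input is the Chevalley--Weil-type computations recorded in Section~\ref{s:H1dR}. The decomposition $H^1_{dR}(Y)=H^1_{dR}(Y)_{\et}\oplus H^1_{dR}(Y)_m\oplus M$ is preserved by $G$, $F$, and $V$; since $\Ker\bigl(F\mid H^1_{dR}(Y)\bigr)=H^0(Y,\Omega^1_Y)$, and $F$ is bijective on the \'etale part and zero on the multiplicative part, one gets a $k[G]$-isomorphism $H^0(Y,\Omega^1_Y)\cong H^1_{dR}(Y)_m\oplus\Ker(F\mid M)$. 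Substituting $H^0(Y,\Omega^1_Y)\cong V_1\oplus V_p^{g_X-1}$ (Tamagawa, \eqref{eq:Tamagawa}) and $H^1_{dR}(Y)_m\cong V_1\oplus V_p^{f_X-1}$ (Nakajima, \eqref{eq:Nakajima}), the Krull--Schmidt theorem forces $\Ker(F\mid M)=\im(V\mid M)\cong V_p^{g_X-f_X}=V_p^h$, i.e., free of rank $h$. For $\Ker V=\im F$ I would run the symmetric argument: since $H^1_{dR}(Y)$ is $BT_1$, $\Ker\bigl(V\mid H^1_{dR}(Y)\bigr)=\im\bigl(F\mid H^1_{dR}(Y)\bigr)$, which $F$ identifies with $H^1_{dR}(Y)/\Ker F=H^1_{dR}(Y)/H^0(Y,\Omega^1_Y)\cong H^1(Y,\OO_Y)\cong V_1\oplus V_p^{g_X-1}$; decomposing $\Ker V$ into \'etale, multiplicative (zero), and local-local parts and using $H^1_{dR}(Y)_{\et}\cong V_1\oplus V_p^{f_X-1}$, Krull--Schmidt again forces $\Ker(V\mid M)\cong V_p^h$. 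The one technical point to watch --- the only real obstacle --- is that $F$ is Frobenius-semilinear, so the identification of $\im\bigl(F\mid H^1_{dR}(Y)\bigr)$ with $H^1_{dR}(Y)/\Ker F$ a priori involves a Frobenius twist; this is harmless since $V_1$ and $V_p$ are base-changed from $\Fp$ and hence fixed by Frobenius twist, but it should be stated. Alternatively one can bypass this last step by deducing the $\Ker V$ statement from the $\Ker F$ statement via the self-duality and $G$-compatibility of the pairing established in parts~(2) and~(3), together with $V_p^\vee\cong V_p$.
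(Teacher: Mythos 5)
Your proposal is correct and follows essentially the same route as the paper: part (1) and the isomorphisms $M[\delta]\cong M/\delta\cong H^1_{dR}(X)_{ll}$ from Proposition~\ref{prop:H^1(Y)}, part (2)--(3) from the de Rham cup product (Oda) and the fact that $\gamma$ is an automorphism, and part (4) by comparing Tamagawa~\eqref{eq:Tamagawa} with Nakajima~\eqref{eq:Nakajima} and invoking Krull--Schmidt. The only (harmless) variation is in the second half of part (4), where you identify $\im F$ via $H^1(Y,\OO_Y)$ and a Frobenius twist (or duality), while the paper simply notes $VM\cong M/\Ker V$ and concludes freeness of $\Ker V=\im F$ from that; your attention to the semilinearity twist is a fair point the paper leaves implicit.
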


\begin{proof}
  Proposition~\ref{prop:H^1(Y)} shows that $M[\delta]\cong
  M/\delta\cong  H^1_{dR}(X)_{ll}$.
  Part~(1) was proven in Proposition~\ref{prop:H^1(Y)} part (2).  For
  part~(2), as we reviewed at the beginning of Section~\ref{s:H1dR},
  $H^1_{dR}(Y)$ is a self-dual $BT_1$ module where the duality is
  induced by the de Rham pairing, and it is easy to see that the
  restriction of this pairing makes $M$ into a self-dual $BT_1$
  module.  It is local-local by definition.  Part~(3) holds because
  $\gamma$ is an automorphism of $Y$, so has degree 1.  For part~(4), note
  that $VM=H^0(Y,\Omega^1_Y)_{ll}$, and comparing the result of Tamagawa
  (equation~\eqref{eq:Tamagawa}) to that of Nakajima
  (equation~\eqref{eq:Nakajima}) shows that $VM$ is free over $k[G]$ of
  rank $h$.  The same then follows for
  $\im F=\Ker V$ since $VM\cong M/(\Ker V)$.
\end{proof}

\begin{rem}
  For a module $M$ with properties (1--3), the spaces $\im F=\Ker V$
  and $\im V=\Ker F$ are all free over $k[G]$ of rank $h$ as soon as
  one of them is.  Furthermore, in this situation the calculations
  \[\< Fm,Fn\>=\< m,VFn\>^p=0
    \and  \< Vm,Vn\>=\< m,FVn\>^{1/p}=0\]
 show that $\im F$ and $\im V$ are isotropic, and they are maximal
 isotropic since they have dimension $ph=\frac12\dim M$.
\end{rem}

\subsection{Coordinates}
We will introduce special coordinates on any $\D_k[G]$-module $M$ with
properties (1--4) as in Proposition~\ref{prop:M-props}.  This allows
for numerical experimentation and leads to full analyses in certain
significant cases.

Write $R$ for the group ring $k[G]$ and introduce a $k$-linear
involution $a\mapsto \tilde a$ by requiring that $\tilde g= g^{-1}$
for all $g\in G$.
This involution is trivial if $p=2$ and is non-trivial with invariant
subspace of dimension $(p+1)/2$ if $p>2$.  We extend it to vectors and
matrices with entries in $R$ by acting component-wise.

\begin{lemma}\label{lemma:tilde-inv}
If $p>2$ and $a\in R$ is a non-zero element with $\tilde a=a$, then
$a=\delta^\alpha u$ where $\alpha\in\{0,2,\dots,p-1\}$ \textup{(}i.e., $\alpha$
is even\textup{)} and $u\in R^\times$ \textup{(}i.e., $u$ is a unit\textup{)}.
\end{lemma}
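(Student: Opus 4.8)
The plan is to reduce the statement to the elementary structure of $R=k[G]\cong k[\delta]/(\delta^p)$ as a local ring. Since $R$ is local with maximal ideal $(\delta)$ and residue field $k$, and since $\delta^p=0$, every nonzero $a\in R$ can be written uniquely as $a=\delta^\alpha u$ with $u\in R^\times$ and $\alpha=\ord_\delta(a)\in\{0,1,\dots,p-1\}$; the existence and well-definedness of $\alpha$ is immediate from dividing a polynomial representative of $a$ by the largest possible power of $\delta$. Thus the only real content is that $\tilde a=a$ forces $\alpha$ to be \emph{even}, and that is what I would establish.

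\textbf{Key steps.} First I would record two facts about the involution. By the computation in Section~\ref{s:G-mods}, $\tilde\delta=\gamma^{-1}-1=-\gamma^{-1}\delta$, so I may write $\tilde\delta=w\delta$ with $w:=-\gamma^{-1}\in R^\times$, and applying the augmentation $R\to k$, $\gamma\mapsto1$, gives $w\equiv-1\pmod\delta$. Second, since $\tilde g=g^{-1}\equiv1\equiv g\pmod\delta$ for every $g\in G$, the involution induces the identity on $R/\delta R=k$; in particular $\overline{\tilde u}=\overline u$ for the image $\overline u\in k^\times$ of any unit $u$. Now apply the involution (a $k$-algebra automorphism of the commutative ring $R$) to $a=\delta^\alpha u$: one gets $\tilde a=\tilde\delta^{\,\alpha}\tilde u=w^\alpha\delta^\alpha\tilde u$, so the hypothesis $\tilde a=a$ yields $\delta^\alpha\bigl(u-w^\alpha\tilde u\bigr)=0$ in $R$. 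The kernel of multiplication by $\delta^\alpha$ on $k[\delta]/(\delta^p)$ is $(\delta^{p-\alpha})$, hence $u-w^\alpha\tilde u\in(\delta^{p-\alpha})\subseteq(\delta)$, using $\alpha\le p-1$. Reducing modulo $\delta$ and invoking the two facts above gives $\overline u-(-1)^\alpha\overline u=0$ in $k$, i.e. $\bigl((-1)^\alpha-1\bigr)\overline u=0$; since $\overline u\in k^\times$ this forces $(-1)^\alpha=1$ in $k$, and as $\mathrm{char}(k)=p>2$ we have $-1\neq1$, so $\alpha$ is even. Finally $p-1$ is even, so $\alpha\in\{0,2,\dots,p-1\}$, which is the asserted form $a=\delta^\alpha u$.

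\textbf{Main obstacle.} There is no serious obstacle: the argument is a two-line $\delta$-adic valuation computation once the multiplier $w=-\gamma^{-1}$ relating $\tilde\delta$ to $\delta$ is identified. The one point to keep in view is that $p>2$ is used essentially (to make ``$(-1)^\alpha=1$'' equivalent to ``$\alpha$ even''), consistent with the fact that for $p=2$ the involution is trivial and the parity conclusion would be false.
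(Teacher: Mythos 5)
Your proof is correct and is essentially the paper's argument: both compute $\tilde\delta=-\gamma^{-1}\delta$, write $a=\delta^\alpha u$ with $u$ a unit, and compare leading $\delta$-adic coefficients to get $(-1)^\alpha=1$, whence $\alpha$ is even since $p>2$. Your version just spells out the details (the unit multiplier $w$, the annihilator of $\delta^\alpha$, and reduction mod $\delta$) that the paper compresses into the congruence $\tilde a\equiv(-1)^\alpha a\pmod{\delta^{\alpha+1}R}$.
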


\begin{proof}
  Since $\delta=\gamma-1$, we have
  \[\tilde\delta=\gamma^{-1}-1=-\gamma^{-1}\delta=-\delta/(1+\delta).\]
Writing $a=\delta^\alpha u$ where $\alpha\ge0$ and $u\in R^\times$, 
we see that $\tilde a\equiv(-1)^\alpha a\pmod{\delta^{\alpha+1}R}$
so $\tilde a =a $ implies that $\alpha$ is even.
\end{proof}

Recall that $\gamma\in G$ is the element corresponding to 1 under
$G\cong\Z/p\Z$. 
For $a=a_0+a_1\gamma+\cdots+a_{p-1}\gamma^{p-1}\in R$, define
\[(a)_0 := a_0.\]
Next, note that the function $R\times R\to k$ given by
\[ (a,b):=(a\tilde b)_0\] is $k$-bilinear and satisfies
$(\gamma a,\gamma b)=(a,b)$.  Let $J$ be the $2h$-by-$2h$ matrix
\[J=\begin{pmatrix}0_h&I_h\\-I_h&0_h\end{pmatrix},\]
  where $0_h$ and $I_h$ are the $h\times h$ zero and identity matrices
  respectively.    Regarding $R^{2h}$  as a space of column vectors,
  we have a perfect, alternating, $k$-bilinear pairing on $R^{2h}$ given by
  \[ \< m,n\> = \left(\t m J\tilde n\right)_0.\]
  Here, $\t m$ stands for the transpose of $m$ and $\tilde n$ is
  computed by applying the involution $a\mapsto\tilde a$ to each entry
  of $n$.  If $G$ acts coordinate-wise on elements of $R^{2h}$, then
  $\< \gamma m,\gamma n\>=\< m,n\>$.

  If $M$ is a $\D_k[G]$-module which is free over $R$ of rank $2h$,
  and if $m_1,\dots,m_{2h}$ is an ordered basis of $M$, we write $[m]$
  for the coordinate vector of $m$:
  \[[m]=\begin{pmatrix}r_1\\\vdots\\r_{2h}\end{pmatrix}
    \quad\text{if}\quad m=r_1m_1+\cdots+r_{2h}m_{2h}.\]
  Since $F$ acts $R$-semilinearly (i.e., $F(am)=a^{(p)}Fm$ for
  $a\in R$ and $m\in M$), we may represent $F$ by a $2h$-by-$2h$
  matrix $\FF$ with entries in $R$, namely, the matrix such that
\[\left[Fm\right]=\FF[m]^{(p)}\]
where the right hand side is the matrix product of $\FF$ and
$[m]^{(p)}$.

We say that a $\D_k$-module $N$ of dimension $2h$ over $k$ is
\emph{superspecial} if it is isomorphic to the Dieudonn\'e module of 
$E[p]^h$, with $E$ a
supersingular elliptic curve.  Three equivalent characterizations are:
(i) $F^2=V^2=0$ on $N$; (ii) in the Kraft--Oort classification, $N$
corresponds to the word $fv$ repeated $h$ times; and (iii) In the
Ekedahl--Oort classification, $N$ corresponds to the elementary
sequence $[0,0,\dots,0]$.
  
\begin{prop}\label{prop:coords}
  Suppose that $M$ is a $\D_k[G]$-module with properties
  \textup{(1--4)} as in Proposition~\ref{prop:M-props}.
  \begin{enumerate}
  \item There exists an ordered basis $m_1,\dots,m_{2h}$ of $M$ such
    that the pairing $\<\cdot,\cdot\>$ is given by
\[\< m,n\>=\left(\t[m]J\widetilde{[n]}\right)_0,\]
and such that the matrix of $F$ has the form
\[\FF=\begin{pmatrix}0&B\\0&D\end{pmatrix}\]
where $B$ and $D$ are $h$-by-$h$ matrices with entries in $R$
satisfying $\t\tilde DB=\t\tilde BD$ and the columns of $\FF$ generate
a free $R$-module of rank $h$.
\item Conversely, any choice of $B$ and $D$ satisfying
  $\t\tilde DB=\t\tilde BD$ and such that the columns of
  $\FF=\begin{pmatrix}0&B\\0&D\end{pmatrix}$ generate a free
  $R$-module of rank $h$ and $\FF$ is $p$-nilpotent\footnote{We say
    $\FF$ is ``$p$-nilpotent'' if $\FF\FF^{(p)}\cdots\FF^{(p^a)}=0$
    for some $a>0$.} defines the structure of $\D_k[G]$-module on
  $R^{2h}$ which satisfies properties \textup{(}1--4\textup{)} of
  Proposition~\ref{prop:M-props}.
\item If $M/\delta M$ is superspecial, then we may choose the basis so
  that the matrix of $F$ has the form
\[\FF=\begin{pmatrix}0&I\\0&D\end{pmatrix}\]
where $\delta$ divides $D$ \textup{(}i.e., $\delta$ divides every
entry of $D$\textup{) }and $\t\tilde D=D$.
\item Conversely, any choice of $D$ which is divisible by $\delta$ and
  which satisfies $\t\tilde D=D$ defines the structure of
  $\D_k[G]$-module on $R^{2h}$ which satisfies properties
  \textup{(}1--4\textup{)} of Proposition~\ref{prop:M-props} and which
  is superspecial modulo $\delta$.
  \end{enumerate}
\end{prop}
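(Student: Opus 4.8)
The plan is to obtain the statement as the special case $B=I$ of part~(2) of the Proposition, and then to check separately that the resulting module is superspecial modulo $\delta$.

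So I would first verify that $B=I$ and the given $D$ satisfy the three hypotheses of part~(2). Since $\t\tilde I=I$, the symmetry condition $\t\tilde DB=\t\tilde BD$ reduces to $\t\tilde D=D$, which is the standing hypothesis on $D$. The columns of $\FF=\begin{pmatrix}0&I\\0&D\end{pmatrix}$ are the zero vector (the first $h$) together with the vectors $\begin{pmatrix}e_j\\De_j\end{pmatrix}$, $j=1,\dots,h$; the $R$-submodule they generate is free of rank $h$ because projection to the first $h$ coordinates carries it isomorphically onto $R^h$. For $p$-nilpotence, a short induction gives
\[\FF\,\FF^{(p)}\cdots\FF^{(p^a)}=\begin{pmatrix}0&D_1D_2\cdots D_a\\0&D_0D_1\cdots D_a\end{pmatrix},\qquad D_i:=D^{(p^i)}.\]
As $\delta=\gamma-1$ has coefficients $\pm1\in\F_p$, Frobenius twisting fixes $\delta$, so each $D_i$ is again divisible by $\delta$; hence a product of $a$ (resp.\ $a+1$) of the $D_i$ is divisible by $\delta^{a}$ (resp.\ $\delta^{a+1}$), and both blocks above vanish once $a\ge p$ since $\delta^p=0$ in $R$. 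Thus $\FF$ is $p$-nilpotent, and part~(2) applies: $R^{2h}$ with this $\FF$ is a $\Dk[G]$-module satisfying properties (1--4) of Proposition~\ref{prop:M-props}.

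It then remains to show that $M:=R^{2h}$ with this structure is superspecial modulo $\delta$. Because $\delta\mid D$, we have $D\equiv0\pmod{\delta}$, so $F$ acts on $M/\delta M\cong k^{2h}$ through $\begin{pmatrix}0&I\\0&0\end{pmatrix}$; in particular $F^2=0$ on $M/\delta M$. Since $M$ is free over $R=k[G]$, multiplication by $\delta^{p-1}$ gives an isomorphism $M/\delta M\isoto M[\delta]$ of $\Dk$-modules (Lemma~\ref{lemma:G-free}; $\delta^{p-1}$ commutes with $F$ and $V$), and the $G$-invariant pairing on $M$ supplied by part~(2) satisfies $\<\delta m,n\>=\<m,\tilde\delta n\>$, so $(\delta M)^\perp=M[\delta]$; by the same reasoning as in Remark~\ref{rem:dualities} this exhibits $M/\delta M$ as a self-dual $BT_1$ module. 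For a self-dual $\Dk$-module, pairing $F^2m$ against an arbitrary element and using $\<Fx,y\>=\<x,Vy\>^p$ together with non-degeneracy shows that $F^2=0$ is equivalent to $V^2=0$; hence $V^2=0$ on $M/\delta M$ too. A self-dual $BT_1$ module of $k$-dimension $2h$ on which $F^2=V^2=0$ corresponds in the Kraft classification to the word $fv$ repeated $h$ times, i.e.\ it is superspecial, as required.

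The verifications in the first steps are routine; the one point that genuinely needs care is that ``$F^2=0$ modulo $\delta$'' does not by itself force superspeciality (e.g.\ $M(fvv)$ is $BT_1$ with $F^2=0$ but $V^2\neq0$), so the self-duality of $M/\delta M$ — and its being $BT_1$ — is really being used to produce $V^2=0$. If one wants to avoid that structural input, the matrix of $V$ can instead be computed directly: the adjunction $\<Fm,n\>=\<m,Vn\>^p$ for the pairing $\<m,n\>=(\t[m]J\widetilde{[n]})_0$, together with $\t\tilde D=D$, forces $V$ to be given by $\begin{pmatrix}D^{(1/p)}&-I\\0&0\end{pmatrix}$, which modulo $\delta$ is $\begin{pmatrix}0&-I\\0&0\end{pmatrix}$; from this one reads off $F^2=V^2=0$ and $\im F=\Ker V$, $\im V=\Ker F$ on $M/\delta M$ directly, whence superspeciality.
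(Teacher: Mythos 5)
There is a genuine gap: the statement comprises all four parts of Proposition~\ref{prop:coords}, but your argument addresses only part~(4), and it does so by \emph{invoking} part~(2) (``part~(2) applies''), which is itself one of the claims to be proven. Nothing is said about part~(1) (existence of a basis in which the pairing is the standard one $\<m,n\>=(\t[m]J\widetilde{[n]})_0$ and $F$ has the block form $\begin{pmatrix}0&B\\0&D\end{pmatrix}$ with $\t\tilde DB=\t\tilde BD$), about part~(2) itself (one must define $V$ by the adjoint matrix $\begin{pmatrix}\t\tilde D^{(1/p)}&-\t\tilde B^{(1/p)}\\0&0\end{pmatrix}$ forced by $\<Fm,n\>=\<m,Vn\>^p$ and then verify $\im F=\Ker V$, $\im V=\Ker F$, freeness of $\im F$, and compatibility of the pairing with $G$), or about part~(3). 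These carry the real content of the proposition: for (1) one chooses an $R$-basis of the free, maximal isotropic module $\Ker F$, produces a complementary free rank-$h$ module in $k$-duality with it via the pairing, and then corrects it inductively by elements of $\Ker F$ to make its span isotropic; for (3), after noting that superspeciality of $M/\delta M$ forces $\delta\mid D$ and $B$ invertible, one must normalize $B$ to $I$ by a change of basis $\begin{pmatrix}T&0\\0&U\end{pmatrix}$ preserving $J$, which sends $B\mapsto\t\tilde U BU^{(p)}$, and the existence of $U$ with $\t\tilde U BU^{(p)}=I$ is obtained from the Lang--Steinberg theorem applied to the endomorphism $U\mapsto(\t\tilde U^{-1})^{(p)}$ of $\GL_h(R)$. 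None of this appears in your proposal, and it cannot be bypassed.

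What you do prove is correct and is essentially the paper's treatment of part~(4): the paper disposes of (4) by ``the same argument as in part~(2) together with the observation that $\FF$ is $p$-nilpotent as soon as $\delta\mid D$,'' and your verification of $p$-nilpotence (using that the Frobenius twist fixes $\delta$, so products of $\ge p$ twists of $D$ are killed by $\delta^p=0$) is exactly that observation made explicit. Your check of superspeciality modulo $\delta$ is also fine; the cleaner of your two routes is the direct one, since modulo $\delta$ the matrices of $F$ and $V$ are $\begin{pmatrix}0&I\\0&0\end{pmatrix}$ and $\begin{pmatrix}0&-I\\0&0\end{pmatrix}$, giving $\im F=\Ker F=\im V=\Ker V$ at once, and you are right that $F^2=0$ alone would not suffice. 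But as a proof of the proposition as stated, the proposal is incomplete until parts~(1)--(3) are supplied.
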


\begin{rems}\mbox{}
  \begin{enumerate}
  \item In parts (1) and (3), we do not claim that $B$ and $D$ are
    uniquely determined by $M$. 
    \item In parts (2) and (4), the action of $V$ is determined by that
      of $F$ and the pairing by the requirement that $\<Fm,n\>=\< m,Vn\>^p$.
  \end{enumerate}
\end{rems}

\begin{proof}
  By hypothesis, $\Ker F$ is free of rank $h$ over $R$ and isotropic
  for the pairing.  Choose an $R$-basis $m_1,\dots,m_h$ for $\Ker F$.
  Since the pairing is non-degenerate, we may choose elements
  $n_1,\dots,n_h$ of $M$ such that for $0\le i<p$ and $1\le j,\ell\le
  h$ we have
  \[\<\gamma^im_j,n_\ell\>=\begin{cases}1&\text{if $i=0$ and $j=\ell$,}\\
      0&\text{otherwise.}
    \end{cases}\]
  Since
  $\<\gamma^im_j,\gamma^{i'}n_\ell\>=\<\gamma^{i-i'}m_j,n_\ell\>$, we
  find that $n_1,\dots,n_h$ generate a free $R$-module of rank $h$
  which is complementary to $\Ker F$ and in $k$-duality with $\Ker F$
  via the pairing.  We then inductively modify the $n_j$ by elements
  of $\Ker F$ to make their $R$-span isotropic.  More precisely, set
  $m_{h+1}=n_1$, choose $m_{12}\in \Ker F$ such that
  $\<m_{h+1},m_{12}\>=\<m_{h+1},m_{12}\>$ and set
  $m_{h+2}=n_2-m_{12}$, etc.  Then for $1\le i,j\le h$ we have
\[\<m_i,m_j\>=\<m_{h+i},m_{h+j}\>=0\and
  \<m_i,m_{h+j}\>=\begin{cases}1&\text{if $i=j$,}\\
    0&\text{otherwise},
  \end{cases}\]
The pairing $\<\cdot,\cdot\>$ then has the desired form with respect
to the basis $m_1,\dots,m_{2h}$.

Since the first $h$ basis elements span the kernel of $F$, the matrix
of $F$ has the form 
\[\FF=\begin{pmatrix}0&B\\0&D\end{pmatrix}\]
where $B$ and $D$ are $h$-by-$h$ matrices with coordinates in $R${}.
Since $\im F$ is $R$-free of rank $h$, the columns of $\FF$ generate a
free $R$-module of rank $h$.  Let $\VV$ be the matrix of $V$ with
respect to the chosen basis.  The compatibility $\<Fm,n\>=\<m,Vn\>^p$
implies that $\t\FF J=J\tilde\VV^{(p)}$, so
\[\VV=\begin{pmatrix}\t\tilde D^{(1/p)}&-\t\tilde B^{(1/p)}\\
    0&0\end{pmatrix},\]
and $VF=0$ implies $\VV\FF^{(1/p)}=0$ which in turn implies $\t\tilde
DB=\t\tilde BD$.  This completes the proof of part~(1).

For part~(2), given $B$ and $D$ satisfying the conditions, define a
$p$-linear operator $F$ and a $p^{-1}$-linear operator $\VV$ on
$R^{2h}$ by setting 
\[Fm=\FF m^{(p)}\and Vm=\VV m^{(1/p)}\]
where
\[\FF=\begin{pmatrix}0&B\\0&D\end{pmatrix}\and
  \VV=\begin{pmatrix}\t\tilde D^{(1/p)}&-\t\tilde B^{(1/p)}\\
    0&0\end{pmatrix}.\]
Then $\im F=\Ker V$ and $\im V=\Ker F$, so we obtain a $BT_1$-module
with a perfect alternating pairing, and it is straightforward to check
that it has the properties (1--4) enumerated in
Proposition~\ref{prop:M-props}.  This completes the proof of part~(2)

For part~(3), first choose a basis as in part~(1).  Since $M/\delta M$
is superspecial, the matrix $D$ is divisible by $\delta$, and the
condition that the columns of $\FF$ span a free $R$-module of rank $h$
implies that $B$ is invertible.

Now consider changes of coordinates that preserve the matrix of
the pairing.  These are precisely the matrices $S\in GL_{2h}(R)$
satisfying $\t SJ\tilde S=J$.  In particular, we may take $S$ of the
form 
\[S=\begin{pmatrix}T&0\\0&U\end{pmatrix}\]
where $T,U\in\GL_h(R)$ and $\t T\tilde U=I$.  
in terms of the new basis, the matrix of $F$ is
\[S^{-1}\FF S^{(p)}=\begin{pmatrix}0&T^{-1}BU^{(p)}\\
    0&U^{-1}DU^{(p)}\end{pmatrix}
=\begin{pmatrix}0&\t\tilde U BU^{(p)}\\
    0&U^{-1}DU^{(p)}\end{pmatrix}.\]
By the Lang--Steinberg theorem \cite[Thm.~10.1]{SteinbergELAG} (applied
to the endomorphism $U\mapsto
\left(\t\tilde U^{-1}\right)^{(p)}$ of $\GL_h(R)$), we may choose
$U$ so that $\t\tilde U BU^{(p)}=I$.  In these new
coordinates, $\FF$ has the desired form, and this proves part~(3).

Part~(4) follows from the same argument as in part~(2) and the observation
that $\FF$ is $p$-nilpotent as soon as $\delta$ divides $D$.
\end{proof}

\begin{rem}
The coordinates in the theorem provide another way to see that there
can be infinitely many non-isomorphic $\D_k[G]$-modules with the same
underlying $\D_k$-module and $k[G]$-module.  In brief, one considers the
subgroup of $\GL_{2h}(R)$ preserving the form and the subspace $\Ker F$.
Assuming that $M/\delta M$ is superspecial and passing to the subgroup
preserving the shape of $\FF$ in part (3) (i.e., with an identity
matrix in the upper right), we find matrices of block form $\begin{pmatrix}
  T&U\\0&W \end{pmatrix}$ where (among other conditions)
${}^t\tilde WW^{(p)}$ is congruent to the identity modulo $\delta$.
Such a matrix $W$ is congruent modulo $\delta$ to one with entries in
$\F_{p^2}$. On the other hand, the effect of this change of
coordinates on $D$ is $D\mapsto W^{-1}DW^{(p)}$, so the claim follows
if one knows that the $\D_k$-module structure of $M$ is determined by
fairly crude invariants of $D$ such as its rank and the rank of
(Frobenius-twisted) iterates. We will not write down a general result
of this shape, but the proofs of Theorem~\ref{thm:h=1} and part (1) of
Theorem~\ref{thm:p=2} give examples where this is visible.
\end{rem}

The coordinates of Proposition~\ref{prop:coords} can be used to
analyze the  special case where $h=1$. Recall that the
$a$-number of a $\D_k$-module is $a=\dim_k\left(\Ker F\cap\Ker
  V\right)$. 

\begin{thm}\label{thm:h=1}
  Assume $p>2$ and let $M$ be a $\D_k[G]$-module satisfying the
  properties \textup{(1--4)} of Proposition~\ref{prop:M-props} with
  $h=1$.
  The $a$-number of $M$ is in $\{2,4,\dots,p-1,p\}$.  Define integers
  $\ell$ and $b$ by $0\le b<a$ and $p=\ell a+b$.  Then
  \[M\cong M\left(f^{\ell+1}v^{\ell+1}\right)^{b}\bigoplus
    M\left(f^{\ell}v^{\ell}\right)^{a-b}\]
  In other words, $M$ can be presented as the $\D_k$-module with
  generators $e_1,\dots,e_a$ and relations
  \[F^{\ell+1}e_i=V^{\ell+1}e_i\quad\text{for $1\le i\le b$}\and
  F^{\ell}e_i=V^{\ell}e_i\quad\text{for $b< i\le a$}.\]
\end{thm}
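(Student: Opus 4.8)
The plan is to put $F$ in an explicit normal form via Proposition~\ref{prop:coords}, read off the $a$-number from that matrix, and then decompose $M$ into cyclic $\D_k$-modules indexed by the powers of $\delta$. First, $M/\delta M$ is a $2$-dimensional $\D_k$-module on which $F$ and $V$ act nilpotently (they are nilpotent on $M$ and commute with $\delta$), so $F^2=V^2=0$ there. In the coordinates of Proposition~\ref{prop:coords}(1) one of $B,D$ is a unit, so a generator $Bm_1+Dm_2$ of $\im F=\Ker V$ has nonzero image in $M/\delta M$; hence $\overline F$ has rank one, and by the symmetric argument (using $\im V=\Ker F=Rm_1$) so does $\overline V$, while $\overline F\,\overline V=\overline V\,\overline F=0$ forces $\im\overline F=\Ker\overline V$. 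Thus $M/\delta M$ is a $2$-dimensional local-local $BT_1$ module over $\kbar$, hence $\cong M(fv)$; that is, $M/\delta M$ is superspecial. Proposition~\ref{prop:coords}(3) then supplies an $R$-basis $m_1,m_2$ with $\Ker F=Rm_1$ and $\FF=\psmat{0 & 1 \\ 0 & D}$, where $\delta\mid D$ and $\tilde D=D$. By Lemma~\ref{lemma:tilde-inv}, either $D=0$, or $D=\delta^{2c}u$ with $u\in R^\times$ and $2c$ even; since $\delta\mid D$ we have $c\ge1$, and since $p$ is odd, $2c\le p-1$.

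For the $a$-number, reading $\Ker F=Rm_1$ and $\Ker V=R(m_1+Dm_2)$ off of $\FF$ and the matrix $\VV$ it determines, one finds
\[\Ker F\cap\Ker V=\operatorname{ann}_R(D)\cdot m_1=\delta^{\,p-2c}R\cdot m_1\]
(with $\operatorname{ann}_R(0)=R$ when $D=0$), so $a=\dim_k(\Ker F\cap\Ker V)$ equals $2c$, or equals $p$ when $D=0$. In particular $a\in\{2,4,\dots,p-1,p\}$, which is the first assertion; write $p=a\ell+b$ with $0\le b<a$, and note $2c=a$ in the case $D\ne0$.

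For the module structure, after a further change of $R$-basis --- solving a nested system of additive (``Artin--Schreier'') equations, possible because $k=\kbar$ (cf.\ the remark following Proposition~\ref{prop:coords}; this may destroy the normal form of the pairing, which does not matter) --- we may assume $D=\delta^{2c}$ and $Vm_2=m_1$, $Vm_1=-\delta^{2c}m_1$. An immediate induction then gives, for $j\ge1$ and $0\le t<p$,
\[F^{\,j}(\delta^t m_2)=\delta^{\,t+2c(j-1)}(m_1+\delta^{2c}m_2),\qquad V^{\,j}(\delta^t m_2)=(-1)^{\,j-1}\delta^{\,t+2c(j-1)}m_1,\]
both zero precisely when $t+2c(j-1)\ge p$. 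For $0\le t<2c$ put $J_t=\#\{i: i\equiv t\pmod{2c},\ 0\le i<p\}$, so $J_t=\ell+1$ if $t<b$ and $J_t=\ell$ if $t\ge b$. From the formulas, the $\D_k$-submodule $N_t$ generated by $\delta^t m_2$ equals the $k$-span of $\{\delta^i m_1,\ \delta^i m_2: i\equiv t\pmod{2c},\ 0\le i<p\}$, of dimension $2J_t$, and is cyclic on $\delta^t m_2$ with the single relation $F^{\,J_t}(\delta^t m_2)=(-1)^{\,J_t-1}V^{\,J_t}(\delta^t m_2)$ together with $F^{\,J_t+1}=V^{\,J_t+1}=0$ there; after rescaling the generator by a suitable element of $k^\times$ (using $k=\kbar$) the relation becomes $F^{\,J_t}e=V^{\,J_t}e$, so $N_t\cong M(f^{\,J_t}v^{\,J_t})$. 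Finally $\sum_{t=0}^{2c-1}N_t=M$ (these $k$-spans together form a basis of $M$) while $\sum_{t=0}^{2c-1}\dim_k N_t=\sum_t 2J_t=2p=\dim_k M$, so the sum is direct:
\[M\cong\bigoplus_{t=0}^{2c-1}N_t\cong M(f^{\ell+1}v^{\ell+1})^{b}\oplus M(f^{\ell}v^{\ell})^{a-b},\]
with the rescaled $\delta^0m_2,\dots,\delta^{2c-1}m_2$ as the generators $e_1,\dots,e_a$. When $D=0$ one has $a=p$, $\ell=1$, $b=0$, and $F^2=V^2=0$ already forces $M\cong M(fv)^p$, in agreement with the displayed formula.

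I expect the main obstacle to be exactly this ``clean normalization'' step: arranging $D=\delta^{2c}$ together with the tidy form of $V$ requires solving the nested additive equations above, and this is where algebraic closedness of $k$ is essential. Once these coordinates are in hand, everything else is a finite linear-algebra computation; carrying units of $R$ through the formulas instead is possible but makes the spanning and linear-independence bookkeeping for the $N_t$ substantially more painful.
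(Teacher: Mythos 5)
Your proof is correct, but it identifies the isomorphism type of $M$ by a genuinely different route than the paper. Both arguments share the same setup: Proposition~\ref{prop:coords}(3) puts $F$ in the form $\psmat{0&1\\0&D}$ with $\delta\mid D$ and $\tilde D=D$, and Lemma~\ref{lemma:tilde-inv} forces $D=0$ or $D=\delta^{2c}u$ with $u\in R^\times$. From there the paper keeps the unit $u$, computes the full canonical filtration of $M$ explicitly, and reads off the Ekedahl--Oort elementary sequence and the Kraft--Oort cycles, obtaining the $a$-number and the word decomposition simultaneously from that classification machinery (with the presentation then coming from \cite[Lemma~3.1]{PriesUlmer21}). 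You instead compute the $a$-number directly as $\dim_k(\Ker F\cap\Ker V)=\dim_k\operatorname{ann}_R(D)$, then use a further Lang/Artin--Schreier normalization over $k=\kbar$ to make $D=\delta^{2c}$ on the nose, and exhibit an explicit direct-sum decomposition of $M$ into the cyclic $\D_k$-submodules $N_t$ generated by $\delta^t m_2$, $0\le t<2c$, verified by a spanning-plus-dimension count. Your route is more elementary and self-contained (no canonical filtration or elementary sequences needed, only the standard cyclic modules $M(f^Jv^J)$), at the cost of the extra normalization step --- which is not foreign to the paper, since its proof of Proposition~\ref{prop:coords}(3) already invokes Lang--Steinberg for exactly this kind of twisted-conjugation problem; the paper's computation, by contrast, works with $D=\delta^{2c}u$ untouched and yields the Ekedahl--Oort sequence as a by-product, which is what it reuses in Theorems~\ref{thm:more-a-parity} and \ref{thm:large-a}. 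One small bookkeeping point: with $B=1$ and $D=\delta^{2c}$ the pairing forces $Vm_1=\delta^{2c}m_1$ and $Vm_2=-m_1$, and your exact sign configuration ($Vm_2=m_1$, $Vm_1=-\delta^{2c}m_1$) is not quite attainable while keeping $Fm_2=m_1+\delta^{2c}m_2$; this is harmless, since the only effect is the nonzero constant in the relation $F^{J_t}e=cV^{J_t}e$ on each $N_t$, which your final rescaling of generators over $\kbar$ absorbs in any case.
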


We give other descriptions of $M$ in the proof below.

\begin{proof}
  Since $M/\delta M$ is 2-dimensional over $k$ and local-local, it is
  superspecial.  We apply part~(3) of Proposition~\ref{prop:coords} to
  identify $M$ with $R^2$ where $F$ acts via a matrix of the form
  $\begin{pmatrix}0&1\\0&D\end{pmatrix}$ with $D\in R$ satisfying
  $\tilde D=D$, and $V$ acts via
  $\begin{pmatrix}D^{(1/p)}&-1\\0&0\end{pmatrix}$.

   We will use the Kraft--Oort and Ekedahl--Oort classifications of
  $\D_k$-modules to analyze $M$.  (See \cite{Oort01} for the original
  analysis and \cite{PriesUlmer21} for a more leisurely and detailed
  exposition.)  First, we construct the canonical filtration on $R^2$,
  and then we check that under the Ekedahl--Oort classification, $M$
  has elementary sequence $[0,\dots,0,1,2,\dots,p-a]$ (of length $p$
  with $a$ zeroes), or equivalently under the Kraft--Oort
  classification, it corresponds to the words $f^{\ell+1}v^{\ell+1}$ with
  multiplicity $b$ and $f^\ell V^\ell$ with multiplicity $a-b$.
  
  We first treat the edge case $D=0$. If $D=0$,
  then one easily checks that the canonical filtration on $M$ has the
  form
  \[0=M_0\subset M_1=R\begin{pmatrix}1\\0\end{pmatrix}\subset M_2=M,\]
  with $FM=M_1$ and $FM_1=0$.  Thus $M$ has elementary sequence
  $[0,\dots,0]$, and $a$ number $p$.  In the Kraft--Oort
  classification, this corresponds to the word $fv$ with multiplicity
  $p$.

  Now assume that $D\neq 0$.  Then $D$ has the form $\delta^au$ where
  $0<a<p$ and $u\in R^\times$.  Moreover, by
  Lemma~\ref{lemma:tilde-inv}, $a$ must be even.  Writing $p=\ell a+b$
  with $b<a$, we have that $b\neq0$.  Define submodules
  $M_j\subset M=R^2$ for $0\le j\le 4\ell+2$ by
  \begin{align*}
    M_{2i}&=R\delta^{p-ia}\begin{pmatrix}1\\D\end{pmatrix}
         &\text{for $0\le i\le \ell$}\\
    M_{1+2i}&=R\delta^{p-b-ia}\begin{pmatrix}1\\D\end{pmatrix}
         &\text{for $0\le i\le \ell$}\\
    M_{2\ell+1+2i}&=R\begin{pmatrix}1\\D\end{pmatrix}
                               +R\delta^{(p-ia)}\begin{pmatrix}0\\1\end{pmatrix}
         &\text{for $0\le i\le \ell$}\\
    M_{2\ell+2+2i}&=R\begin{pmatrix}1\\D\end{pmatrix}
                               +R\delta^{(p-b-ia)}\begin{pmatrix}0\\1\end{pmatrix}
         &\text{for $0\le i\le \ell$}\\
  \end{align*}
  Then we have
  \[0=M_0\subset M_1\subset\cdots\subset M_{2\ell+1}=FM\subset
    M_{2\ell+2}\subset\cdots\subset M_{4\ell+2}=M.\]

  Next, one checks that
  \[FM_j=\begin{cases}
      M_0&\text{if $0\le j\le 2$,}\\
      M_{j-2}&\text{if $2\le j\le 2\ell+1$,}\\
      M_{2\ell-1}&\text{if $2\ell+1\le j\le 4\ell$,}\\
      M_{2\ell}&\text{if $j=4\ell+1$,}\\
      M_{2\ell+1}&\text{if $j=4\ell+2$,}
    \end{cases}\]
  and
  \[V^{-1}M_j=\begin{cases}
      M_{2\ell+1}&\text{if $j=0$,}\\
      M_{2\ell+2}&\text{if $j=1$,}\\
      M_{2\ell+3}&\text{if $2\le j\le 2\ell+1$,}\\
      M_{j+2}&\text{if $2k+1\le j\le 4\ell$,}\\
      M_{4\ell+2}&\text{if $4k\le j\le 4\ell+2$.}
    \end{cases}\]
  This shows that the $M_j$ give the canonical filtration of $M$, and
  that the corresponding elementary sequence is
  $[0,\dots,0,1,\dots,p-a]$.  Thus the $a$-number of $M$ is $a$.  For
  the classification by words, we note that the cycles corresponding
  to the filtration are
  \[0\labeledlongto{V^{-1}}2\ell+1\labeledlongto{V^{-1}}2\ell+3
    \labeledlongto{V^{-1}}\cdots 
    \labeledlongto{V^{-1}}4\ell+1\labeledlongto{F}2\ell\labeledlongto{F}2\ell-2
    \labeledlongto{F}\cdots
   \labeledlongto{F}2\labeledlongto{F}0\]
 (yielding the word $f^{\ell+1}v^{\ell+1}$) with multiplicity $b=\dim_k(M_1/M_0)$ and
 \[1\labeledlongto{V^{-1}}2\ell+2\labeledlongto{V^{-1}}2\ell+4
   \labeledlongto{V^{-1}}\cdots
   \labeledlongto{V^{-1}}4\ell\labeledlongto{F}2\ell-1
   \labeledlongto{F}2\ell-3\labeledlongto{F}\cdots
   \labeledlongto{F}3\labeledlongto{F}1\]
 (yielding the word $f^{\ell}v^{\ell}$)  with multiplicity $a-b=\dim_k(M_2/M_1)$.

 The presentation by generators and relations then follows from
 \cite[Lemma~3.1]{PriesUlmer21}.  This completes the proof of the
 theorem.
  \end{proof}

We can now give a significant application to Artin--Schreier covers:

\begin{cor}\label{cor:h=1}
  Suppose that $p>2$, $\pi:Y\to X$ and $G=\gal(Y/X)\cong\Z/p\Z$ are as usual
  and that $f_X=g_X-1$.
  Then the $a$-number of $J_Y$ is in
  $\{2,4,\dots,p-1,p\}$.  Moreover, the Dieudonn\'e module of $J_Y[p]$
  has the form
  \[  L\oplus (L\tensor k[G])^{f_X-1}\oplus M_a\]
  where $L=M(\Z/p\Z\oplus\mu_p)$ and $M_a$ is the module described in
  Theorem~\ref{thm:h=1}.
\end{cor}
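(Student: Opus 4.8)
The plan is to assemble the corollary from Oda's theorem, the canonical decomposition of a $\D_k$-module into its \'etale, multiplicative, and local-local parts, and Theorem~\ref{thm:h=1} applied to the local-local part. By Oda's theorem (Section~\ref{s:Dieu}) we have $M(J_Y[p])\cong H^1_{dR}(Y)$ as $\D_k[G]$-modules, so it suffices to describe $H^1_{dR}(Y)$. Write $H^1_{dR}(Y)\cong H^1_{dR}(Y)_\et\oplus H^1_{dR}(Y)_m\oplus H^1_{dR}(Y)_{ll}$ for the canonical decomposition, which (as recalled in Section~\ref{s:Dieu}) is respected by $F$, $V$, and the $G$-action, since $G$ commutes with $F$ and $V$.

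First I would treat the \'etale and multiplicative parts, where we are in the situation $k=\kbar$ in force throughout Section~\ref{s:apps-ll}. The category of \'etale $p$-group schemes over $\kbar$ is semisimple with unique simple object $\Z/p\Z$, so $H^1_{dR}(Y)_\et$ is isomorphic as a $\D_k$-module to a direct sum of copies of $M(\Z/p\Z)$; combining this with the $k[G]$-module isomorphism $H^1_{dR}(Y)_\et\cong V_1\oplus V_p^{f_X-1}$ of Proposition~\ref{prop:H^1(Y)}(2) and the fact that $V_p\cong k[G]$, and invoking Krull--Schmidt, one checks that $H^1_{dR}(Y)_\et\cong M(\Z/p\Z)\oplus\left(M(\Z/p\Z)\tensor k[G]\right)^{f_X-1}$ as a $\D_k[G]$-module (this is precisely the argument indicated after Theorem~\ref{thm:k-point} for recovering \eqref{eq:DSet}). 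Dually, or by the identical argument applied to $H^1_{dR}(Y)_m$ using Nakajima's isomorphism \eqref{eq:Nakajima} and the unique simple multiplicative object $\mu_p$, one obtains $H^1_{dR}(Y)_m\cong M(\mu_p)\oplus\left(M(\mu_p)\tensor k[G]\right)^{f_X-1}$. Writing $L=M(\Z/p\Z\oplus\mu_p)=M(\Z/p\Z)\oplus M(\mu_p)$ and adding these, we get $H^1_{dR}(Y)_\et\oplus H^1_{dR}(Y)_m\cong L\oplus\left(L\tensor k[G]\right)^{f_X-1}$.

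Next I would handle the local-local part. The hypothesis $f_X=g_X-1$ gives $h:=g_X-f_X=1$, so by Proposition~\ref{prop:M-props} the module $M:=H^1_{dR}(Y)_{ll}$ satisfies properties (1)--(4) there with $h=1$; Theorem~\ref{thm:h=1} then applies and identifies $M$ with the module $M_a$ described there, where $a$ is the $a$-number of $M$ and $a\in\{2,4,\dots,p-1,p\}$. Together with the previous paragraph this yields the asserted shape $L\oplus\left(L\tensor k[G]\right)^{f_X-1}\oplus M_a$ for $M(J_Y[p])=H^1_{dR}(Y)$. It remains to check that $a_Y=a$: if $x=x_\et+x_m+x_{ll}$ lies in $\Ker F\cap\Ker V$ on $H^1_{dR}(Y)$, then $Fx_\et=0$ forces $x_\et=0$ ($F$ is bijective on the \'etale part) and $Vx_m=0$ forces $x_m=0$ ($V$ is bijective on the multiplicative part), so $x=x_{ll}\in\Ker F\cap\Ker V$ on $M$; hence $a_Y=\dim_k\bigl(\Ker F\cap\Ker V\bigr)=a\in\{2,4,\dots,p-1,p\}$. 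This also recovers, a posteriori, the assertions of Theorem~\ref{thm:h=1-app}(1). There is no serious obstacle in this argument: all the real content is in Theorem~\ref{thm:h=1} and Proposition~\ref{prop:M-props}, which we may use; the only step warranting a little care is the identification of the $\D_k[G]$-module structure on the \'etale and multiplicative parts, i.e.\ that a $G$-free \'etale $\D_k$-module over $\kbar$ of $k$-dimension $pm$ is isomorphic to $\left(M(\Z/p\Z)\tensor k[G]\right)^m$, which follows from semisimplicity of the \'etale category together with Krull--Schmidt once the underlying $k[G]$-module type is known. The rest is bookkeeping with the canonical decomposition.
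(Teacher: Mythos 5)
Your proposal is correct and follows essentially the same route as the paper: decompose $H^1_{dR}(Y)=M(J_Y[p])$ into \'etale, multiplicative, and local-local parts, identify the first two as $\D_k[G]$-modules using complete splitting over $\kbar$ together with the $k[G]$-structure from Proposition~\ref{prop:H^1(Y)}(2), and apply Theorem~\ref{thm:h=1} via Proposition~\ref{prop:M-props} (with $h=g_X-f_X=1$) to the local-local part. Your extra check that the $a$-number of $J_Y$ is computed by the local-local summand is a detail the paper leaves implicit, but it is harmless and correct.
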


\begin{proof}
  Since $k$ is algebraically closed, $H^1_{dR}(Y)_\et$ is completely
  split, and part~(2) of Proposition~\ref{prop:H^1(Y)} gives its
  $G$-module structure.  In all we have an isomorphism of
  $\D_k[G]$-modules 
  \[H^1_{dR}(Y)_\et\cong M(\Z/p\Z)\oplus\left(M(\Z/p\Z)\tensor_k
      k[G]\right)^{f_X-1}.\]
  Similarly,
  \[H^1_{dR}(Y)_m\cong M(\mu_p)\oplus\left(M(\mu_p)\tensor_k
      k[G]\right)^{f_X-1}.\]
  Since $H^1_{dR}(Y)_{ll}=M$ has the properties enumerated in
  Proposition~\ref{prop:M-props}, it is isomorphic to the
  $\D_k[G]$-module $M_a$ described in Theorem~\ref{thm:h=1}.  This
  completes the proof of the Corollary.
\end{proof}

\begin{s-example}
Let $p=5$ and $k=\F_p$.  The table below illustrates that all
possibilities listed in Corollary \ref{cor:h=1} 
occur, with $X$ hyperelliptic of degree $7$ and genus $g_X=3$.
Each base curve $X$ has $f_X=2=g_X-1$, and $\nu_X=1$ so the specified
cover $Y\rightarrow X$ 
is the unique unramified $\Z/p\Z$-cover defined over $k$.
\begin{center}
\begin{tabular}{c|c|c|}
	$X: y^2 =  $ &  $Y: z^3-z = $ & $a_Y$ \\
	\hline 
	$x^7 - x^5 - 2x^3 - 2x^2 + x - 1$  & $(-2x^4 - x^2 + 1)y + 2$ & 2 \\
	$-x^7 - x^6 - x^5 + x^4 + x^2 - 2x$  &  $(-x^4 - 2x^3 + 2x^2 + 1)y$ & 4 \\
	$2x^7 - 2x^5 + 2x^4 - x$  & $(-x^9 + 2x^7 - 2x^6 - x^5 + 2x^4 - x + 1)y$ & 5 \\
\end{tabular}
\end{center}
\end{s-example}

The argument underlying Theorem~\ref{thm:h=1} has consequences for
other $\D_k[G]$-modules $M$ with $M/\delta M$ superspecial:

\begin{thm}\label{thm:more-a-parity}
  Suppose that $p>2$ and let $M$ be a $\D_k[G]$-module with properties
  \textup{(1--4)} as in Proposition~\ref{prop:M-props} \textup{(}in
  particular, free over $R$ of rank $2h$\textup{)} and such that
  $M/\delta M$ is superspecial.  Choose coordinates as in
  Proposition~\ref{prop:coords}, and let $a(M)$ be the $a$-number of
  $M$.
  \begin{enumerate}
  \item If $D$ has the form $\delta^\alpha U$ with $1\le\alpha\le p$
    and $U\in\GL_h(R),$ then $M$ is the direct sum of $h$ copies of
    the module in Theorem~\ref{thm:h=1}.  I.e., writing
    $p=\ell\alpha+\beta$ with $0\le\beta<\alpha$, $M$ is the module
    associated to the words $f^{\ell+1}v^{\ell+1}$ with multiplicity
    $h\beta$ and $f^\ell v^\ell$ with multiplicity $h(\alpha-\beta)$.
    If $\alpha<p$, then $h\alpha$ is even.
  \item If $a(M)=h$ \textup{(}the minimum possible value\textup{)},
    then $M$ is associated to the word $f^pv^p$ with multiplicity $h$.
    In this case, $h$ is even.
  \item If $h$ is odd, then $a(M)\ge h+1$.
  \item If $a(M)<p$, then $a(M)$ is even.      
  \end{enumerate}
\end{thm}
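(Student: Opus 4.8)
The plan is to reduce the statement to a question about one matrix and then settle it by an inductive congruence argument. Since $M/\delta M$ is superspecial, Proposition~\ref{prop:coords}(3) lets us choose an $R$-basis (where $R=k[G]$) identifying $M$ with $R^{2h}$ so that the matrix of $F$ is $\FF=\begin{pmatrix}0&I\\0&D\end{pmatrix}$ and that of $V$ is $\VV=\begin{pmatrix}D^{(1/p)}&-I\\0&0\end{pmatrix}$ for an $h\times h$ matrix $D$ over $R$ with $\t\tilde D=D$ (and $\delta\mid D$, although only the symmetry will be used below). Writing $[Fm]=\FF[m]^{(p)}$ and $[Vm]=\VV[m]^{(1/p)}$, a direct computation gives $\Ker F=\{(x,0):x\in R^h\}$ and $\Ker V=\{(x,Dx):x\in R^h\}$, whence $\Ker F\cap\Ker V=\{(x,0):Dx=0\}$ and therefore $a(M)=\dim_k\Ker\big(D\colon R^h\to R^h\big)$. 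It thus suffices to prove the following: if $D$ is an $h\times h$ matrix over $R$ with $\t\tilde D=D$ and $\dim_k\Ker D<p$, then $\dim_k\Ker D$ is even.

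I would prove this by induction on $h$. Congruences $D\mapsto\t{\tilde S}DS$ with $S\in\GL_h(R)$ preserve the condition $\t\tilde D=D$ and satisfy $\Ker(\t{\tilde S}DS)=S^{-1}\Ker D$, so they leave $\dim_k\Ker D$ unchanged; I use them to split off a block. Assume $D\neq0$ and let $c$ be the least $\delta$-adic valuation of an entry of $D$, so $\delta^c\mid D$ and $c<p$; in particular $(\delta^{p-c}R)^h\subseteq\Ker D$, so $\dim_k\Ker D\ge ch$. If this minimal valuation occurs at a diagonal entry $D_{ii}=\delta^c u$ with $u\in R^\times$, then $D_{ii}$ is fixed by the involution $a\mapsto\tilde a$, so Lemma~\ref{lemma:tilde-inv} forces $c$ to be even; an elementary congruence (which is available because the pivot $D_{ii}$ has minimal valuation, so the relevant linear systems are consistent and the off-diagonal corrections cancel as in Gram--Schmidt) clears the $i$-th row and column and exhibits the new matrix as the block sum of $(\delta^c u)$ and a smaller matrix $D'$ with $\t\tilde{D'}=D'$. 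Here $\Ker(\delta^c u)=\delta^{p-c}R\cong V_c$ has even dimension $c$. If the minimal valuation occurs only off the diagonal, then $h\ge2$, and in fact $2c<p$, since otherwise $\dim_k\Ker D\ge ch\ge 2c\ge p$, against the hypothesis. Picking $i\neq j$ for which $D_{ij}$ has valuation $c$, the $2\times2$ principal submatrix $B$ on rows and columns $i,j$ has off-diagonal entries of valuation $c$ and diagonal entries of valuation $>c$; hence $\det B$ has valuation exactly $2c$, the Smith normal form of $B$ over the principal ideal ring $R$ is $\operatorname{diag}(\delta^c,\delta^c)$, and $B=\delta^c U$ for some $U\in\GL_2(R)$. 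Again one clears rows and columns $i,j$ and writes the new matrix as the block sum of $B$ and a smaller matrix $D'$ with $\t\tilde{D'}=D'$, and now $\Ker B=\Ker(\delta^c U)=(\delta^{p-c}R)^2\cong V_c\oplus V_c$ has even dimension $2c$. In either case $\dim_k\Ker D'=\dim_k\Ker D-\dim_k\Ker(\text{block})<p$, so $\dim_k\Ker D'$ is even by induction and $\dim_k\Ker D$ is its sum with an even number, hence even. The cases $D=0$ (where $\dim_k\Ker D=ph\ge p$ is outside the hypothesis) and $h=0$ are immediate.

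The technical heart, and the main obstacle, is this inductive step over the ring $R$, which has zero divisors: one must verify that the elementary row-and-column operations genuinely annihilate the intended entries, the key point in each case being that the chosen pivot — the diagonal entry $D_{ii}$, or the $2\times2$ block $B$ — carries the globally minimal $\delta$-adic valuation, so every element it must ``divide'' lies in $\delta^c R$ (resp.\ $\delta^c R^2$). The two structural inputs that force evenness are Lemma~\ref{lemma:tilde-inv} (a diagonal pivot occurs at an even power of $\delta$, producing a $1\times1$ block with even-dimensional kernel) and the determinant/Smith-form computation (when $2c<p$ the $2\times2$ pivot is $\delta^c$ times a unit, producing a block with $2c$-dimensional kernel); notably one never needs a normal form for the $2\times2$ block, only the isomorphism type of its kernel as an $R$-module. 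Once the block decomposition is in hand the parity conclusion is formal, and taking $h=1$ recovers the constraint $a\in\{2,4,\dots,p-1\}$ of Theorem~\ref{thm:h=1}.
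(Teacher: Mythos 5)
Your coordinate reduction and the congruence induction do establish part (4) correctly: with the basis of Proposition~\ref{prop:coords}(3) one indeed has $a(M)=\dim_k\Ker\bigl(D\colon R^h\to R^h\bigr)$, congruences $D\mapsto \t{\tilde S}DS$ preserve both the condition $\t{\tilde D}=D$ and the kernel dimension, a diagonal pivot of minimal valuation is $\tilde{\ }$-fixed so Lemma~\ref{lemma:tilde-inv} makes its valuation even, and in the purely off-diagonal case the bound $\dim_k\Ker D\ge ch$ forces $2c<p$, whence the $2\times2$ pivot block is $\delta^c$ times a unit and contributes an even-dimensional kernel. This is a genuinely different (and more elementary, if longer) route than the paper's, which simply observes that $a(M)$ is the $\delta$-adic valuation of $\det D$ (after lifting to $k[[\delta]]$), that ${}^t\tilde D=D$ gives $\widetilde{\det D}=\det D$, and then invokes Lemma~\ref{lemma:tilde-inv}; your argument avoids the lifting subtlety at the cost of verifying symmetric elimination over the non-domain $R$.

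The gap is that the theorem has four parts and your proposal proves only the last one. Part (1) — that $D=\delta^\alpha U$ with $U\in\GL_h(R)$ forces $M\cong M(f^{\ell+1}v^{\ell+1})^{h\beta}\oplus M(f^\ell v^\ell)^{h(\alpha-\beta)}$ — is not a statement about $\dim_k\Ker D$ at all; it requires computing the canonical filtration, which the paper does by rerunning the proof of Theorem~\ref{thm:h=1} with the column $\begin{pmatrix}1\\D\end{pmatrix}$ replaced by the column span of $\begin{pmatrix}I\\D\end{pmatrix}$. The parity claim inside (1) needs a further input you never develop: writing $D=\sum_{i\ge\alpha}\delta^iD_i$, the leading matrix $D_\alpha\in M_h(k)$ is symmetric when $\alpha$ is even and skew-symmetric when $\alpha$ is odd (a matrix refinement of Lemma~\ref{lemma:tilde-inv}), so an invertible $D_\alpha$ with $h\alpha$ odd is impossible. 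Part (2) then follows by noting $a(M)\ge h$ always (since $M[\delta]$ is superspecial) and that equality forces $\delta^2\nmid D$ with $D_1$ invertible, i.e.\ the $\alpha=1$ case of (1); part (3) is an immediate consequence of (2). None of this structural content — which is the bulk of the theorem — appears in your proposal, and it cannot be extracted from your congruence argument, since congruence transformations $\t{\tilde S}DS$ are not among the allowed changes of coordinates for $M$ (those act by $D\mapsto U^{-1}DU^{(p)}$), so they preserve $\dim_k\Ker D$ but not the isomorphism class of $M$ as a $\D_k$-module.
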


\begin{proof}
  We begin by observing that if
  \[D=\sum_{i=\alpha}^{p-1}\delta^iD_i\]
  with the $D_i\in M_h(k)$, then an argument generalizing that of
  Lemma~\ref{lemma:tilde-inv} shows that the ``leading matrix''
  $D_\alpha$ is symmetric when $\alpha$ is even and skew-symmetric
  when $\alpha$ is odd.  We also note that
  \[\Ker F\cap\Ker V=\Ker F\left|_{\im F}\right.,\]
  so
  \begin{equation}\label{eq:a-D}
a(M)=\dim_k\Ker\left(D:R^h\to R^h\right).    
  \end{equation}

(1) If $D$ is a power of $\delta$ times an invertible matrix, the
argument of Theorem~\ref{thm:h=1} calculating the canonical filtration
goes through essentially verbatim with
$\begin{pmatrix}1\\D\end{pmatrix}$ replaced by the column span of
$\begin{pmatrix}I\\D\end{pmatrix}$ and
$\begin{pmatrix}0\\1\end{pmatrix}$ replaced by
$\begin{pmatrix}0\\R^h\end{pmatrix}$.  The canonical filtration of $M$ has the
same length and same permutation as in Theorem~\ref{thm:h=1}, but the
subquotients are $h$ times as large. For the parity assertion, the
remarks above show that if $h\alpha$ is odd, then the leading matrix
$D_\alpha$ is skew-symmetric and of odd size, so not invertible.

(2) The assumption that $M/\delta M$ is superspecial implies that
$M[\delta]$ is also superspecial, so we have $a(M)\ge h$.  By \eqref{eq:a-D},
we can have equality only if $\delta^2$ does not divide $D$ and the
leading matrix $D_1$ is invertible.  Thus we are in the case
$\alpha=1$ of part (1), $\beta=0$, and $\ell=p$, so $M$ is associated
to $f^pv^p$ with multiplicity $h$, and $h$ must be even.

(3) If $h$ is odd, we cannot be in case (2), so $a(M)>h$.

(4) Equation~\eqref{eq:a-D} shows that as an element of $R$,
$\det D\in\delta^{a(M)}R^\times$, in other words, $a(M)$, the length
of $\Ker D$, is the ``valuation'' of $\det D$.  (This is literally
true only if we lift $D$ to a valuation ring, say $k[[\delta]]$.
Projecting back to $R=k[[\delta]]/(\delta^p)$ gives the statement
here.)  Since ${}^t\tilde D=D$, we have $\widetilde{\det D}=\det D$,
so if $a(M)<p$, then $a(M)$ must be even by
Lemma~\ref{lemma:tilde-inv}.
\end{proof}

Theorem~\ref{thm:h=1} and the first half of
Theorem~\ref{thm:more-a-parity} show that if $h$ or $a$ is small, the
possibilities for $M$ are severely restricted.  We can also make such a
deduction when $a$ is large with respect to $h$:

\begin{thm}\label{thm:large-a}
  Let $M$ be a $\D_k[G]$-module satisfying the
  properties \textup{(1--4)} of Proposition~\ref{prop:M-props}.
  \begin{enumerate}
  \item If $a(M)=ph$ \textup{(}the maximum value\textup{)}, then $M$
    and $M/\delta M$ are superspecial.
  \item If $a(M)=ph-1$, then $M/\delta M$ is superspecial and there
    are exactly $h$ possibilities for 
    $M$ as a $\D_k$-module which are indexed by an integer $1\le r\le h$.
    The corresponding Ekedahl--Oort structure is
    $[0,\dots,0,1,\dots,1]$ with $ph-r$ zeroes and $r$ ones.  The
    corresponding multiset of words is $fv$ with multiplicity $2h-2r$
    and the word $(vf)^{r-1}v^2(fv)^{r-1}f^2$ with multiplicity one.
  \end{enumerate}
\end{thm}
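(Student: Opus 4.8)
The plan is to reduce everything to the coordinates of Proposition~\ref{prop:coords} and to compute, in each case, the canonical filtration exactly as in the proof of Theorem~\ref{thm:h=1}. For part~(1): since $M$ is a self-dual $BT_1$ module, $\dim_k\Ker F=\dim_k\Ker V=\tfrac12\dim_k M=ph$, and using $\Ker V=\im F$ and $\Ker F=\im V$ from Proposition~\ref{prop:M-props}(2) one has $\Ker F\cap\Ker V=\Ker\bigl(F|_{\im F}\bigr)$; hence $a(M)\le\dim_k\im F=ph$, with equality precisely when $\Ker F=\Ker V$. In that case $\im F=\Ker V=\Ker F$ and $\im V=\Ker F=\Ker V$, so $F^2=V^2=0$, i.e.\ $M$ is superspecial; since $F$ and $V$ commute with the $G$-action, $F^2=V^2=0$ descends to the local-local $2h$-dimensional module $M/\delta M$, which is therefore superspecial as well. (Equivalently, in the coordinates of Proposition~\ref{prop:coords}(1), \eqref{eq:a-D} shows $a(M)=ph$ forces $D=0$, so $\FF=\psmat{0&B\\0&0}$ and $\VV=\psmat{0&-\t\tilde B^{(1/p)}\\0&0}$.)

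For part~(2) I would first show that $M/\delta M$ is superspecial. In the coordinates of Proposition~\ref{prop:coords}(1), \eqref{eq:a-D} gives $\dim_k\im\bigl(D\colon R^h\to R^h\bigr)=ph-a(M)=1$. Multiplication by $D$ respects the $\delta$-adic filtration $R^h\supset\delta R^h\supset\cdots\supset\delta^{p-1}R^h\supset 0$ and acts on each of the $p$ graded pieces $\delta^iR^h/\delta^{i+1}R^h\cong k^h$ as the reduction $\bar D$, so $\dim_k\im D\ge p\cdot\rk_k(\bar D)$ and hence $\bar D=0$. Consequently $\bar\FF=\psmat{0&\bar B\\0&0}$ and $\bar\VV=\psmat{0&-\t\bar B^{(1/p)}\\0&0}$, so $\bar F^2=\bar V^2=0$ on the local-local $2h$-dimensional module $M/\delta M$, which is therefore superspecial (and so is the submodule $M[\delta]\cong M/\delta M$ of Proposition~\ref{prop:M-props}).

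Now that $M/\delta M$ is superspecial, I would pass to the coordinates of Proposition~\ref{prop:coords}(3): $\FF=\psmat{0&I\\0&D}$ with $\delta\mid D$ and $\t\tilde D=D$. Two dimension counts organize the rest. First, $\dim_k F^2M=\dim_k\im F-a(M)=ph-(ph-1)=1$; since $F^3M\subseteq F^2M$ and $F$ is nilpotent, $F$ cannot act surjectively on the one-dimensional space $F^2M$, so $F^3=0$, and dually $V^3=0$ and $\dim_k V^2M=1$. Thus $M$ is a repeated extension of superspecial constituents whose Kraft word multiset contains exactly one $ff$ and one $vv$ and no $fff$ or $vvv$. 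Second, writing $D=\delta^sD_s$ with $\bar D_s\ne0$, the argument generalizing Lemma~\ref{lemma:tilde-inv} shows $\bar D_s$ is symmetric for $s$ even and skew-symmetric for $s$ odd; combining $\dim_k\im D=1$, a filtration estimate for $\delta^sD_s$, Nakayama's lemma, and the residual (Lang--Steinberg-normalized) coordinate freedom $D\mapsto W^{-1}DW^{(p)}$ preserving the shape of Proposition~\ref{prop:coords}(3), one should reduce $D$ to a normal form depending on a single integer $r$ — roughly the ``$\delta$-length'' of the unique non-superspecial constituent — with $1\le r\le h$ forced because that constituent occupies at most $h$ of the $2h$ free $R$-generators of $M$. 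For each normal form one then computes the canonical filtration of $(R^{2h},\FF)$ exactly as in the proof of Theorem~\ref{thm:h=1}, tracking the two cycles of the associated permutation and the dimensions of the successive subquotients, reads off the elementary sequence $[0,\dots,0,1,\dots,1]$ with $r$ ones, and deduces the asserted multiset of words and its generators-and-relations description via \cite[Lemma~3.1]{PriesUlmer21}. The converse — that each $r$ with $1\le r\le h$ actually occurs — follows by exhibiting the corresponding $D$ and applying Proposition~\ref{prop:coords}(4).

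The step I expect to be the main obstacle is this last classification of $D$ (with $\delta\mid D$, $\t\tilde D=D$, $\dim_k\im D=1$) up to twisted conjugation, together with the canonical-filtration bookkeeping: unlike the case $h=1$ of Theorem~\ref{thm:h=1}, where $D$ is just a scalar power of $\delta$, here one must show that the self-duality constraint and the tiny image leave exactly the $h$ normal forms claimed, and that each produces precisely the stated $\D_k$-module rather than one of the other iterated extensions of superspecial pieces — a genuine danger given the poor behaviour of $BT_1$ extensions discussed in Section~\ref{s:comments}.
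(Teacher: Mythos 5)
Your part (1) and the first half of part (2) are fine and agree with the paper's route: in the coordinates of Proposition~\ref{prop:coords}, $a(M)=\dim_k\Ker\bigl(D:R^h\to R^h\bigr)$, so $a(M)=ph$ forces $D=0$ and $a(M)=ph-1$ forces $\dim_k\im D=1$, whence $\delta\mid D$ and $M/\delta M$ is superspecial. (The paper gets more, more quickly: $\im D$ is an $R$-submodule of $R^h$ of $k$-dimension $1$, hence killed by $\delta$, which forces every entry of $D$ to lie in $\delta^{p-1}R$, i.e.\ $D=\delta^{p-1}D_{p-1}$ with $D_{p-1}\in M_h(k)$ of rank one.) Your counts $F^3=V^3=0$ and $\dim_kF^2M=\dim_kV^2M=1$ are also correct, but, as you suspect, they are far from sufficient: for $h\ge2$ the self-dual multiset consisting of $f^2v$, $fv^2$, and $fv$ with multiplicity $2h-3$ has exactly one $ff$, one $vv$, and no $fff$ or $vvv$, yet it is not on the theorem's list, so the conclusion cannot be reached by these dimension counts alone.

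The genuine gap is precisely the step you flag as the main obstacle and never carry out: the classification of $D$ and the identification of the integer $r$. Moreover the invariant you propose is not the right one. Since $\dim_k\im D=1$ already forces $D=\delta^{p-1}D_{p-1}$, there is no freedom in any ``$\delta$-length'' of $D$; the $r$ of the theorem is instead $r=\dim_k\bigl(kv+kv^{(p^2)}+kv^{(p^4)}+\cdots\bigr)$, where $kv\subset k^h$ is the column span of the rank-one matrix $D_{p-1}$, i.e.\ it records how that line moves under the $p^2$-power Frobenius. The paper avoids your normal-form problem entirely: it never reduces $D$ up to the twisted conjugation $D\mapsto W^{-1}DW^{(p)}$, but computes the canonical filtration of $M$ directly in terms of $v$, starting from $M_1=F^2M=k\psmat{\delta^{p-1}v^{(p)}\\0}$ and iterating $M_{ph+i}=V^{-1}M_i$, $M_{i+1}=FM_{ph+i}$; this gives $\phi(ph+i)=i+1$ for $1\le i<r$ and $\phi(ph+r)=r$, and self-duality supplies $\phi(ph-i)$, which pins the Ekedahl--Oort type as $[0,\dots,0,1,\dots,1]$ with exactly $r$ ones (and hence, over $k=\kbar$, the $\D_k$-isomorphism class and the stated word multiset), with $1\le r\le h$ automatic because $v\in k^h$. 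To complete your route you would have to actually prove that the residual coordinate changes allowed by Proposition~\ref{prop:coords}(3) reduce $D_{p-1}$ to $h$ normal forms and that each yields exactly the stated module rather than one of the other iterated extensions of superspecial pieces; as written, that step is missing, and with $r$ construed as a $\delta$-adic length it is not the statement that is true.
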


Note that the Ekedahl--Oort structures appearing in part (2) are a
small fraction ($h$ vs. ${ph}$) of those with $a$-number 1, i.e., the
$R$-structure on $M$ is a significant constraint.

\begin{proof}
  (1) By Equation~\eqref{eq:a-D}, $M$ has $a$-number $ph$ precisely
  when $D=0$, and it follows (as in the proof of
  Theorem~\ref{thm:h=1}) that $M$ and $M/\delta M$ are superspecial.

  (2) By Equation~\eqref{eq:a-D}, $M$ has $a$-number $ph-1$ precisely
  when $D$ has $k$-rank 1, in which case we must have
  $D=\delta^{p-1}D_{p-1}$ where $D_{p-1}\in M_h(k)$ has rank 1.  It
  follows that $D\equiv0\pmod\delta$, i.e., $M/\delta M$ is
  superspecial.

  Choose a vector $v\in k^h$ so that the column span of
  $D_{p-1}$ is $kv$, and let $r$ be defined by
  \[r:=\dim_k\left( kv+kv^{(p^2)}+kv^{(p^4)}+\cdots\right).\]
  (Here we write $v^{(p^a)}$ for the result of raising each coordinate
  of $v$ to the $p^a$-th power.)  Note that $1\le r\le h$.  We are
  going to show that $M$ has the asserted invariants by computing
  its canonical filtration.

  We use the definitions and results of \cite{Oort01} as reviewed in
  \cite[\S3-4]{PriesUlmer21}. We index certain elements of the
  canonical filtration of $M$ by their dimensions.  In particular,
  \[M_{ph}=\Ker V=\im F=\text{column span (over $R$) of }\begin{pmatrix}I\\D
    \end{pmatrix}.\]
  Let $\phi:\{0,\dots,2ph\}\to\{0,\dots,ph\}$ be the function such
  that $FM_i=M_{\phi(i)}$.  (To define $\phi$ on this domain, we need
  to extend the canonical filtration to a final filtration, but that
  will play no role in what follows.)  To say that the $a$-number of
  $M$ is 1 is precisely to say that $\phi(ph)=1$.  Let
    \[M_1=F^2M=k\begin{pmatrix}\delta^{p-1}v^{(p)}\\0
      \end{pmatrix}\]
    and compute 
    \begin{align*}
      M_{ph+1}&=V^{-1}M_1=M_{ph}+
           k\begin{pmatrix}0\\\delta^{p-1}v^{(p^2)}\end{pmatrix}\\
      M_2&=FM_{ph+1}=
           k\begin{pmatrix}\delta^{p-1}v^{(p)}\\0\end{pmatrix}+
      k\begin{pmatrix}\delta^{p-1}v^{(p^3)}\\0\end{pmatrix}\\
      M_{ph+2}&=V^{-1}M_2=M_{ph}+
           k\begin{pmatrix}0\\\delta^{p-1}v^{(p^2)}\end{pmatrix}+
           k\begin{pmatrix}0\\\delta^{p-1}v^{(p^4)}\end{pmatrix}\\
      &\vdots\\
      M_{ph+r}&=V^{-1}M_2=M_{ph}+
           k\begin{pmatrix}0\\\delta^{p-1}v^{(p^2)}\end{pmatrix}+
      k\begin{pmatrix}0\\\delta^{p-1}v^{(p^4)}\end{pmatrix}+
      \cdots+k\begin{pmatrix}0\\\delta^{p-1}v^{(p^{2r})}\end{pmatrix}
    \end{align*}
Note that the definition of $r$ implies that $FM_{ph+r}$ has dimension
$r$.   We have thus established that $\phi(ph+i)=i+1$ for $1\le i<r$
and $\phi(ph+r)=r$.  The self-duality of $M$ implies that
$\phi(ph-i)=1$ for $1\le i<r$ and $\phi(ph-r)=0$.  This completely
determines the Ekedahl --Oort structure of $M$ as
$[0,\dots,0,1,\dots,1]$ with $ph-r$ zeroes and $r$ ones.

We leave it as an exercise for the reader to check that the
corresponding set of words is $fv$ with multiplicity $2h-2r$
and $(vf)^{r-1}v^2(fv)^{r-1}f^2$ with multiplicity one.
\end{proof}

We now turn to the case with $p=2$ and $M/\delta M$ superspecial where
we are able to give an almost complete analysis.  Before stating the
main result, we record a simple lemma.

\begin{lemma}\label{lemma:p=2-R-structure}
  Suppose $p=2$ and let $M$ be a finite-dimensional, local-local
  $\D_k$-module with a fixed alternating pairing $\<\cdot,\cdot\>$
  satisfying $\<Fm,n\>=\<m,Vn\>^p$ for all $m,n\in M$.  In order to
  equip $M$ with the structure of a $\D_k[G]$-module with the
  properties \textup{(1-4)} of Proposition~\ref{prop:M-props}, it
  suffices to give a $\D_k$-submodule $N\subset M$ which is maximal
  isotropic for the pairing and an isomorphism $\phi:N^\vee\to N$ of
  $\D_k$-modules which is symmetric, i.e., with $\phi^\vee=\phi$.
\end{lemma}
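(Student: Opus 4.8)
The plan is to encode a $\Z/2\Z$-action on $M$ commuting with $F$ and $V$ as a $\D_k$-linear endomorphism $\delta\colon M\to M$ with $\delta^2=0$, via $\gamma:=1+\delta$ (in characteristic $2$, $\gamma^2=1+\delta^2=1$). Under this dictionary the properties of Proposition~\ref{prop:M-props} become conditions on $\delta$: property (2) is a property of $M$ alone and holds by hypothesis; $M$ is free over $k[G]=k[\delta]/(\delta^2)$ exactly when $\Ker\delta=\im\delta$ (property (1)); the compatibility $\<\gamma m,\gamma n\>=\<m,n\>$ unwinds, because $p=2$, into the two requirements that $\im\delta$ be isotropic and that $\delta$ be self-adjoint, i.e.\ $\<\delta m,n\>=\<m,\delta n\>$ (property (3)); and property (4) needs to be checked for only one of $\im F,\im V,\Ker F,\Ker V$, by the Remark following Proposition~\ref{prop:M-props}.

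Given $(N,\phi)$, I would take $\delta$ to be the composite
\[
M\onto M/N\ \xrightarrow{\ \overline\psi\ }\ N^\vee\ \xrightarrow{\ \phi\ }\ N\ \into M,
\]
where $\overline\psi$ is induced by the pairing. Indeed, the identity $\<Fm,n\>=\<m,Vn\>^p$ says precisely that $m\mapsto\<\cdot,m\>$ is an isomorphism of $\D_k$-modules $M\isoto M^\vee$ (onto the Dieudonn\'e dual, the $p$-th powers being absorbed into $F_{M^\vee}$ and $V_{M^\vee}$); since $N$ is isotropic, this isomorphism carries $N$ onto the submodule of functionals vanishing on $N$, which is $(M/N)^\vee$, so it descends to an isomorphism $\overline\psi\colon M/N\isoto N^\vee$ of $\D_k$-modules. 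All four maps are $\D_k$-linear, so $\delta$ is $\D_k$-linear; the inclusion $\im\delta\subseteq N=\Ker(M\onto M/N)$ gives $\delta^2=0$; and since $\phi$ and $\overline\psi$ are isomorphisms, $\im\delta=N=\Ker\delta$. Thus property (1) holds, and $\im\delta=N$ is isotropic by the choice of $N$.

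The crux is that $\delta$ is self-adjoint if and only if $\phi^\vee=\phi$. This drops out by writing both sides in terms of $\overline\psi$ and $\phi$: up to the canonical identification $N^{\vee\vee}=N$, the map $\overline\psi$ is the transpose of the isomorphism $N\isoto(M/N)^\vee$ that the (isotropic) pairing induces, so one computes $\<m,\delta n\>=(\overline\psi\,\overline n)\bigl(\phi^\vee(\overline\psi\,\overline m)\bigr)$ while $\<\delta m,n\>=(\overline\psi\,\overline n)\bigl(\phi(\overline\psi\,\overline m)\bigr)$; since $\overline\psi$ is surjective, equality for all $m,n$ is the same as $\phi^\vee=\phi$. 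Together with property (1) this establishes property (3). Property (2) holds since, as a $\D_k$-module with its pairing, $M$ is unchanged.

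It remains to verify property (4), which I expect to be the one point requiring genuine care. Since $\delta$ commutes with $F$, the space $\im F=\Ker V$ is $\delta$-stable, with $\delta(\im F)=F(\delta M)=F(N)$ and $(\im F)\cap M[\delta]=(\im F)\cap N$; hence $\im F$ is free over $k[G]$ exactly when $F(N)=(\im F)\cap N$. The inclusion $F(N)\subseteq(\im F)\cap N$ is immediate, and the reverse is where one uses that $N$ is itself a $BT_1$ module, so that $\im(F|_N)=\Ker(V|_N)=N\cap\Ker V=N\cap\im F$; this holds for the submodules $N$ arising in the applications of the lemma. With one space of property (4) shown $k[G]$-free, the Remark after Proposition~\ref{prop:M-props} yields the rest, completing the argument.
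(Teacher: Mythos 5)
Your construction of $\delta$ and your verification of properties (1)--(3) coincide with the paper's: the paper also takes $\delta$ to be the composite $M\onto M/N\cong N^\vee\labeledto{\phi}N\into M$, gets $R=k[G]$-freeness from $\Ker\delta=\im\delta=N$, and reduces (3) to self-adjointness of $\delta$, which is the symmetry of $\phi$ (your explicit observation that the cross term $\<\delta m,\delta n\>$ is disposed of by the isotropy of $N$ is left implicit there). Where you genuinely diverge is property (4). You use the criterion that $\im F$ is $k[G]$-free iff $\delta(\im F)=(\im F)[\delta]$, i.e.\ $F(N)=(\im F)\cap N$, and you prove the nontrivial inclusion only under the extra assumption that $N$ is itself a $BT_1$ module. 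The paper instead argues (4) by a dimension count: a $k$-basis of a complement of $F(N)$ in $F(M)$ is an $R$-basis of $F(M)$, which rests on the equality $\dim_kF(N)=\tfrac12\dim_kN$, asserted there to follow from self-duality of $N$. That assertion is precisely where your extra hypothesis is hiding: self-duality alone does not give it (take $M$ superspecial and $N=\im F=\Ker V$ with any symmetric $\phi$; then $\delta(\im F)=F(N)=0$ and property (4) actually fails), and for a self-dual $N$ the dimension equality is equivalent to $N$ being $BT_1$. So the caveat you flag is not a defect of your argument but an implicit hypothesis of the lemma, satisfied, as you note, by every $N$ the paper feeds into it ($N\cong M(fv)^h$ or a superspecial submodule). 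In short: your proof is correct on the same terms as the paper's, follows the same construction, and its treatment of (4), while requiring (and making explicit) the $BT_1$ condition on $N$, is the more transparent of the two.
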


\begin{proof}
Since $N$ is maximal isotropic, the pairing on $M$ induces
an isomorphism of $\D_k$-modules $M/N\cong N^\vee$.  We define
$\delta:M\to M$ as the composition 
\[M\onto M/N\cong N^\vee\labeledto{\phi}N\into M.\]
Clearly $\delta^2=0$, so $M$ has the structure of an $R=k[G]$-module.
If $m_1,\dots,m_d$ is a basis of a complement of $N$ in $M$, then the
same set is a basis of $M$ as an $R$-module, so $M$ is $R$-free.

Since each of the maps in the composition defining $\delta$ is a map
of $\D_k$-modules, so is $\delta$, so $M$ is a $\D_k$-module
satisfying parts (1-2) of Proposition~\ref{prop:M-props}.

For part (3), the pairing and the $R$-action are compatible if and
only if $\<\delta m_1,m_2\>=\<m_1,\delta m_2\>$ for all
$m_1,m_2\in M$. (Here $p=2$, so $\tilde\delta=\delta$.) This holds
because we chose $\phi$ to be symmetric.

For part (4), since $M$ and $N$ are self-dual,
\[\dim_kF(M)={1\over 2}\dim_k(M)\and
  \dim_kF(N)={1\over2}\dim_k(N)={1\over4}\dim_k(M).\]
Choose a basis $m_1,\dots,m_{d/2}$ for a complement of $F(N)$ in
$F(M)$. Then these elements span an $R$-module of $k$-dimension $d$.
Since $\delta$ commutes with $\D_k$, it is contained in $F(M)$, and
thus equal to $F(M)$ by dimension considerations. This shows that
$\im F$ is a free $R$-module and completes the proof that $M$ is
a $\D_k$-module with properties (1-4) of
Proposition~\ref{prop:M-props}. \end{proof}

\begin{thm}\label{thm:p=2}
  Assume that $p=2$, and define a $\D_k[G]$-module $M$ to be
  ``admissible'' if $M$ has the properties \textup{(1--4)} of
  Proposition~\ref{prop:M-props} and $M/\delta M$ is superspecial.
  \begin{enumerate}
  \item The multiset of words attached to an admissible $M$ consist of
    $fv$ with even multiplicity together with a self-dual multiset of
    words each of which is a concatenation of subwords of the form
    \[(vf)^{e_2}v^2(fv)^{e_1}f^2\]
    with $e_1,e_2\ge0$.
  \item Every multiset of words $\WW$ as in part \textup{(1)} is
    the set of words attached to an admissible $M$.
  \item The Ekedahl--Oort structure attached to an admissible $M$
    starts with $h$ zeroes, i.e., it has the form
    \[ [0,\dots,0,\psi_{h+1},\dots,\psi_{2h}].\]
  \item Every Ekedahl--Oort structure
    $\Psi=[0,\dots,0,\psi_{h+1},\dots,\psi_{2h}]$ of length $2h$ which
    starts with $h$ zeroes is the Ekedahl--Oort structure of an admissible $M$.
  \end{enumerate}
\end{thm}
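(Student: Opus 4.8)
The plan is to reduce the whole statement, via the coordinate description of Proposition~\ref{prop:coords}(3) and the recognition criterion of Lemma~\ref{lemma:p=2-R-structure}, to a normal-form analysis of a single symmetric matrix over $\bar k$, carried out in the style of the proofs of Theorems~\ref{thm:h=1} and~\ref{thm:large-a}. Throughout write $R=k[G]\cong k[\delta]/(\delta^2)$, noting that for $p=2$ the involution $a\mapsto\tilde a$ is trivial. The first point is a dictionary: if $M$ is admissible then $N:=\delta M$ is a $\D_k$-submodule equal to $M[\delta]$ (as $M$ is $R$-free), isotropic (since $\langle\delta m,\delta n\rangle=\langle m,\delta^2 n\rangle=0$) of dimension $\tfrac12\dim_kM$, hence maximal isotropic, and $N\cong M/\delta M$ is superspecial. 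Conversely, Lemma~\ref{lemma:p=2-R-structure} shows that a self-dual $BT_1$ $\D_k$-module $M$ admits an admissible $\D_k[G]$-structure as soon as it contains a maximal isotropic superspecial $\D_k$-submodule $N$: pick any symmetric isomorphism $N^\vee\isoto N$ (such exist because $N$ is superspecial over $\bar k$), and then $\delta M=N$ and $M/\delta M\cong N^\vee$ are superspecial. So parts~(1) and~(3) amount to constraining, and parts~(2) and~(4) to realizing, the word-multiset resp.\ Ekedahl--Oort structure of a self-dual $BT_1$ $\D_k$-module that contains a maximal isotropic superspecial submodule.

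To make this explicit, use Proposition~\ref{prop:coords}(3): present $M\cong R^{2h}$ with $F$ acting through $\mathcal F=\psmat{0&I\\0&D}$, where $\delta\mid D$ and (using $p=2$) $\t D=D$, so $D=\delta D_1$ with $D_1\in M_h(k)$ symmetric. A direct computation gives $F$, $V$ and multiplication by $\delta$ in these coordinates, identifies $N=\delta M$ with the superspecial submodule above, and yields $a(M)=2h-\operatorname{rank}_kD_1$, in agreement with~\eqref{eq:a-D}. The isomorphism class of $M$ as a $\D_k[G]$-module is the class of $D_1$ under the equivalence $D_1\mapsto\t A\,D_1\,A^{(p)}$ produced by the allowed changes of basis in the proof of Proposition~\ref{prop:coords}(3), with Lang--Steinberg supplying the residual normalization. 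Over $\bar k$ one brings $D_1$ to a block-diagonal normal form for this twisted congruence; each block yields a $\D_k[G]$-summand of $M$, and for each block type the canonical filtration of the underlying $\D_k$-module is computed as in Theorems~\ref{thm:h=1} and~\ref{thm:large-a}, the controlling invariants being the ranks of $D_1$ and of its Frobenius-twisted iterates (the integer ``$r$'' of Theorem~\ref{thm:large-a}). The result is that a zero block of size $m$ contributes $fv$ with multiplicity $2m$, a $1\times1$ invertible block contributes $f^2v^2$, and a general degenerate block contributes a single self-dual word that is a concatenation of subwords $(vf)^{e_2}v^2(fv)^{e_1}f^2$ with $e_1,e_2\ge0$; since $fv$ has length $2$ while each such subword has length $\ge4$, copies of $fv$ occur only through zero blocks and hence in even total multiplicity. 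Taking the union over blocks proves~(1). Part~(3) then follows by translating~(1) through the word/Ekedahl--Oort dictionary of~\cite{PriesUlmer21}: the multisets permitted by~(1) are exactly the self-dual $BT_1$ word-multisets whose elementary sequence, of length $2h$, begins with at least $h$ zeroes.

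For the realizability statements~(2) and~(4) one runs this analysis in reverse. Given a self-dual multiset $\WW$ of the shape in~(1), assemble $D_1$ as the block-diagonal sum of the matrices realizing its constituent words — a zero block for each of the (even number of) copies of $fv$, one degenerate block for each concatenation-word — to obtain an admissible $M$ with word-multiset $\WW$; equivalently, build the target self-dual $BT_1$ $\D_k$-module directly, exhibit inside it a maximal isotropic superspecial $\D_k$-submodule block-by-block, and invoke Lemma~\ref{lemma:p=2-R-structure}. Part~(4) then follows from part~(2) and the same dictionary.

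The main technical difficulty is the normal-form classification of symmetric matrices over $\bar k$ under the Frobenius-twisted congruence $D_1\mapsto\t A\,D_1\,A^{(p)}$, together with the block-by-block computation of canonical filtrations: this is exactly where the subwords $(vf)^{e_2}v^2(fv)^{e_1}f^2$ and the evenness of the $fv$-multiplicity are forced, and it is the natural but lengthy extension of the arguments of Theorems~\ref{thm:h=1} and~\ref{thm:large-a} to $p=2$ and arbitrary rank. A secondary, purely combinatorial, point is to verify that this family of word-multisets coincides with the family of elementary sequences beginning with $h$ zeroes.
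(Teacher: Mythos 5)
Your reduction to coordinates via Proposition~\ref{prop:coords}(3) and the dictionary with Lemma~\ref{lemma:p=2-R-structure} agree with the paper's setup, but the core of your argument for parts (1) and (2) --- a block-diagonal normal form for the symmetric matrix $D_1$ under the twisted congruence, with zero blocks accounting for all copies of $fv$, $1\times1$ invertible blocks giving $f^2v^2$, and every remaining block giving a single concatenation word --- is not proved, and in the form stated it is false. The paper's Example~\ref{ex:indecomp} (with $h=2$ and $D_1=\psmat{1&1\\1&1}$) is an admissible module that is indecomposable as a $\D_k[G]$-module, so its $D_1$ is not equivalent to any proper block-diagonal matrix; yet its word multiset is $(f^2v^2),(fv)^2$. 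Hence an indecomposable ``block'' can contribute copies of $fv$ together with a concatenation word, so your length argument (``copies of $fv$ occur only through zero blocks and hence in even total multiplicity'') breaks down, and the claimed block-to-word dictionary cannot hold. This is precisely the difficulty the paper's proof avoids: for (1) it never classifies $D_1$, but computes $(vf)^\infty M$, $(vf)^\infty 0$, $(fv)^\infty M$, $(fv)^\infty 0$ directly in coordinates to show the $fv$-multiplicity equals $2\dim_k K$ (hence even), splits off a $\D_k$-submodule (not an $R$-submodule) isomorphic to $M(fv)^{2e}$, and then forces the remaining words to be concatenations of $(vf)^{e_2}v^2(fv)^{e_1}f^2$ via the $u_{1,1}$-number inequality of \cite{PriesUlmer21correction}; for (2) it builds, from the Kraft presentation of each concatenation word, an explicit surjection $M\onto M(fv)^h$ with kernel $M(fv)^h$ (the Moore-matrix choice of the $\alpha_i$ and $\beta_j$) and then invokes Lemma~\ref{lemma:p=2-R-structure}. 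Your alternative realization of (2) by ``assembling $D_1$ block by block'' presupposes that each concatenation word is realized by a single symmetric block, which is again exactly the unproven classification.

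There is a second gap: you deduce (3) from (1), and (4) from (2), by asserting that the multisets of shape (1) are exactly the word multisets whose elementary sequence of length $2h$ begins with at least $h$ zeroes, deferring this as ``a secondary, purely combinatorial point.'' That equivalence is not established anywhere in your argument and is in effect a consequence of the full theorem rather than an available input. The paper instead proves (3) directly, by observing that the basis words $\le fv$ in the canonical-filtration order all end in $v$, hence are killed by $F$, and that there are $h+d$ of them (where $2d$ is the $fv$-multiplicity), and it proves (4) independently of (2): it takes Oort's explicit model of the self-dual $BT_1$ module with elementary sequence $\Psi$, exhibits inside it a maximal isotropic superspecial $\D_k$-submodule $N$ (spanned by the $X_i+Y_i$ and $Y_{h+i}$), checks $M/N\cong N^\vee$ is superspecial, and applies Lemma~\ref{lemma:p=2-R-structure}. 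As written, your proposal leaves both the normal-form step and this combinatorial equivalence unestablished, and the first of these is contradicted by an example in the paper.
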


\begin{rem}
  We will prove part (2) of the theorem with an explicit
  construction of a module $M$ with the asserted structures as $\D_k$-
  and $R=k[G]$-modules. We do not claim that any admissible $M$ is
  isomorphic as $\D_k[G]$-module to one of those constructed in part
  (2). (This is the reason we describe our analysis as
  ``almost complete.'')  In fact, the examples constructed for part
  (2) have a direct factor (as $\D_k[G]$-module) whose underlying
  $\D_k$-module is  $M(fv)$ with even multiplicity.  However,
  Example~\ref{ex:indecomp} after the proof shows that not every
  admissible module has this property.
\end{rem}

\begin{proof}
(1) Let $M$ be an admissible $\D_k[G]$-module, and choose coordinates
as in Proposition~\ref{prop:coords}.  We will use these coordinates to
compute certain elements of the canonical filtration of $M$.

Since $M/\delta M$ is superspecial, the matrix $D$ in
Proposition~\ref{prop:coords}(3) has the form $D=\delta D_1$ where
$D_1\in M_h(k)$.  For vectors or matrices with coordinates in $k$ or
subspaces of $k^h$, we use an exponent ${}^{(p^a)}$ to denote the
result of raising each entry or coordinate to the power $p^a$.
Viewing $D_1$ as an endomorphism of $k^h$ by left multiplication,
define subspaces 
\begin{align*}
  K&=\Ker D_1\cap \Ker D_1^{(p^2)}\cap \Ker D_1^{(p^4)}\cap\cdots\\
\noalign{and}  
  I&=\im D_1+\im D_1^{(p^2)}+\im D_1^{(p^4)}+\cdots%\\
\end{align*}
Since $k^h$ is finite-dimensional, each of these subspaces is in fact
the intersection or sum of finitely many of the displayed terms, and
we have $K^{(p^2)}=K$ and $I^{(p^2)}=I$.

We let words on the alphabet $\{f,v\}$ act on submodules of $M$ as in
\cite[\S3]{PriesUlmer21}: $fN=F(N)$ and $vN=V^{-1}(N)$.
Straightforward calculation (similar to that in the proof of
Theorem~\ref{thm:large-a}(2)) shows that for sufficiently large $n$,
$(vf)^nM$ is independent of $n$.  We write $(vf)^\infty M$ for the
common value, and we find
\[(vf)^\infty M=
  \begin{pmatrix}
    R^h\\ \delta R^h+K^{(p)}
  \end{pmatrix}.\]
Similarly,
\begin{align*}
  (vf)^\infty 0 &=  \begin{pmatrix}
    R^h\\ \delta I
  \end{pmatrix},\\
  (fv)^\infty M &=  \begin{pmatrix}
    \delta R^h+ K\\ 0
  \end{pmatrix},\\
  \noalign{and}
    (fv)^\infty 0 &=  \begin{pmatrix}
    \delta I^{(p)}\\ 0
  \end{pmatrix}.
\end{align*}
Note that $f$ and $v$ carry the quotients $\left((vf)^\infty
  M\right)/\left((vf)^\infty 0\right)$  and $\left((fv)^\infty
  M\right)/\left((fv)^\infty 0\right)$ isomorphically onto one
another.  It follows that the multiplicity of $fv$ in the multiset of
words attached to $M$ is
\[\dim_k K+\codim_k I = 2 \dim_k K.\]
Set $e=\dim_k K$.

Choose a complement $J\subset k^h$ to $I$ (i.e., $k^h=I\oplus J$) with
$J^{(p^2)}=J$.   Then we find that the subspace
\[N:=\begin{pmatrix}
    0\\ K^{(p)}+\delta J
  \end{pmatrix}+
  \begin{pmatrix}
    \delta K+\delta J^{(p)}\\ 0
  \end{pmatrix}\]
of $M$ is a $\D_k$-submodule isomorphic to $M(fv)^{2e}$.  (Note that it
is not in general an $R$-submodule.)  Moreover, it is visible that the
image $N$ in $M/\delta M$ is isomorphic to $e$ copies of $M(fv)$.

By the Kraft classification of $\D_k$-modules, we have an isomorphism
$M\cong N\oplus N'$ of $\D_k$-modules where $N'$ has dimension $4h-4e$
over $k$ and where the multiset of words associated to $N'$ is
self-dual and consists of primitive words of length $>2$ (i.e., the
words $f$, $v$, and $fv$ do not appear).  Since the image of $N$ in
$M/\delta M$ has dimension $2e$ and is isomorphic to $M(fv)^e$, the
image of $N'$ in $M/\delta M$ must have dimension at least $2h-2e$ and
contain a subspace isomorphic to $M(fv)^{h-e}$.  In particular, the
$u_{1,1}$-number of $N'$ is at least $h-e$.

We now check that for $p=2$, a self-dual $\D_k$-module $N'$ of
dimension $4n$ and $u_{1,1}$-number $\ge n$ whose associated set of
primitive words consists of words of length $>2$ in fact has
$u_{1,1}$-number equal to $n$ and all of its associated words are
concatenations of words of the form $(vf)^{e_2}v^2(fv)^{e_1}f^2$. To
that end, recall the calculation of $u_{1,1}$-numbers in
\cite{PriesUlmer21correction}: Given a word $w$, rotate and subdivide
it so that it has the form $w=s_1s_2\cdots s_\ell$ where each $s_i$
ends in $f^2$ and has no other appearances of $f^2$. A word that ends
in $f^2$, has no other appearances of $f^2$, and contains $v^2$ can be
written in the form $(vf)^{e_2}v^2t_i(fv)^{e_1}f^2$ where $t_i$ is
either empty or it starts with $f$ and ends with $v^2$. (This defines
$e_1$ and $e_2$.) By \cite[Prop.]{PriesUlmer21correction},
\[u(M(w))+u(M(w)^\vee)=\sum_{i=1}^\ell u_i\] where
\[u_i= \begin{cases}
  0&\text{if $s_i$ has no appearances of $v^2$,}\\
 (e_2+1)+(e_1+1)&\text{if $s_i=(vf)^{e_2}v^2t_i(fv)^{e_1}f^2$.}
\end{cases}\]
We find that
\[u(M(w))+u(M(w)^\vee) \le \frac14\dim_k\left(M(w)\oplus
    M(w)^\vee\right)\]
with equality if and only if the second case above always holds and
all of the $t_i$ are empty. This establishes that the words associated
to $N'$ are concatentations of words of the form
$(vf)^{e_2}v^2(fv)^{e_1}f^2$, which completes the proof of part (1) of
the theorem.

(2) Suppose $fv$ appears in $\WW$ with even multiplicity $d$.
Give $N:=M(fv)\tensor_kR$ the structure of a $\D_k[G]$-module by
letting $\D_k$ act on the first factor and $R$ on the second. Choose
an alternating pairing on $M(fv)$ and use it to identify
$M(fv)\tensor\delta$ with the dual of $M(fv)\tensor 1$.
Lemma~\ref{lemma:p=2-R-structure} shows that $N$ is admissible, as is
$N^{d/2}$. To finish the proof, we construct an admissible module with
multiset of words equal to those in $\WW$ except $fv$. Taking
the direct sum with $N^{d/2}$ then provides the required module.

We thus may assume that $\WW$ does not contain $fv$, so consists
entirely of words which are concatenations of words of the form
$(vf)^{e_2}v^2(fv)^{e_1}f^2$ with $e_1,e_2\ge0$. Let $M$ be the
$\D_k$-module associated to the words in $\WW$ and let $4h=\dim_kM$,
so $4h$ is also the number of letters in $\WW$. We claim that there is
a surjection of $\D_k$-modules $M\onto M(fv)^h$ whose kernel is
isomorphic as $\D_k$-module to $M(fv)^h$. Choosing a symmetric pairing
on $M(fv)^h$ and applying Lemma~\ref{lemma:p=2-R-structure}, we see
that $M$ admits a $\D_k[G]$-module structure which makes it
admissible.

The proof of \cite[Proposition]{PriesUlmer21correction} shows that
each subword $(vf)^{e_2}v^2(fv)^{e_1}f^2$ leads to a (non-unique)
surjection $M\onto M(fv)^{e_2+1}$ and (dually) a (non-unique) injection
$M(fv)^{e_1+1}\into M$. With appropriate choices, these fit together to
make $M$ into an extension of $M(fv)^h$ by itself. We explain this for
a single subword and leave the mild generalization to concatenations
to the reader.

Let $w=(vf)^{e_2}v^2(fv)^{e_1}f^2$.   Present the module $M(w)$ as in
\cite[\S3.2]{PriesUlmer21} with generators
$E_0,E_1,\dots,E_{e_1+e_2+1}=E_0$ and relations
\begin{align*}
  VE_{e_1+e_2+1}&=FE_{e_1+e_2},\\
                &\vdots\\
  V^2E_{e_1+1}&=FE_{e_1},\\
                &\vdots\\
  VE_1&=F^2E_0.
\end{align*}
(For notational convenience we have assumed $e_1,e_2>0$.  The other
cases are similar but simpler.)  We write $z_0^{(j)}$ and $z_1^{(j)}$
(with $j=1,\dots,e_2+1$) for bases of copies of $M(fv)$ where 
$Fz_0^{(j)}=Vz_0^{(j)}=z_1^{(j)}$ and $Fz_1^{(j)}=Vz_1^{(j)}=0$.  A
surjection to $M(fv)^{e_2+1}$ is given by 
\[E_i\mapsto
  \begin{cases}
    0&\text{$1\le i\le e_1$,}\\
    \alpha_j^{p^{2(i-e_1-1)}}z_0^{(i-e_1)}&\text{$e_1+1\le i\le e_1+e_2+1$,}
  \end{cases}\]
where $\alpha_1,\dots,\alpha_{e_2+1}\in k$ are chosen so that the
matrix
\[
  \begin{pmatrix}
    \alpha_1&\alpha_1^{p^2}&\dots&\alpha_1^{p^{2e_2}}\\
    \vdots&\vdots&&\vdots\\
    \alpha_{e_2+1}&\alpha_{e_2+1}^{p^2}&\dots&\alpha_{e_2+1}^{p^{2e_2}}
  \end{pmatrix}\]
has rank $e_2+1$.  Let $\beta_0,\beta_1,\dots,\beta_{e_2+1}$ be a
non-zero solution to the system
\[
  \begin{pmatrix}
    \alpha_1^{1/p}&\alpha_1^p&\alpha_1^{p^3}&\dots&\alpha_1^{p^{2e_2+1}}\\
    \vdots&\vdots&\vdots&&\vdots\\
    \alpha_{e_2+1}^{1/p}&\alpha_{e_2+1}^p&\alpha_{e_2+1}^{p^3}
    &\dots&\alpha_{e_2+1}^{p^{2e_2+1}}
  \end{pmatrix}
  \begin{pmatrix}
    \beta_0\\ \vdots \\ \beta_{e_2+1}
  \end{pmatrix}=0,
\]
and note that by our choice of the $\alpha_i$, we have that $\beta_0$ and
$\beta_{e_2+1}$ are non-zero.  One then checks that the kernel of the
surjection $M\onto M(fv)^h$ is the $k$-vector space $N$ spanned by
\[
  \beta_0VE_{e_1+1}+\beta_1FE_{e_1+1}+\cdots+\beta_{e_2+1}FE_{e_1+e_2+1}\]
and
\[E_1,\dots, E_{e_1},
  F^2E_0, FE_1,\dots FE_{e_1}.\]
Let $N_1$ be the subspace spanned by the first $e_1+1$ vectors listed
above.  The one finds that $FN=VN=N_1$ and $FN_1=VN_1=0$.  This shows
that $N$ is isomorphic as $\D_k$-module to $M(fv)^{e_1+1}$.

Making a similar analysis of concatenations of words of the form
$(vf)^{e_2}v^2(fv)^{e_1}f^2$ shows that $M$ is an extension of
$M(fv)^h$ by $M(fv)^h$, and this completes the proof of part (2) of
the Theorem.

For part (3), assume that $M$ is associated to the a set of cyclic words
consisting of $fv$ with multiplicity $2d$ and concatenations of words
\[s_i=(vf)^{e_{2i}}v^2(fv)^{e_{2i-1}}f^2\qquad i=1,\dots,m\]
(with repetitions as necessary to account for multiplicities).  Note
that self-duality implies that
$\sum_{i=1}^me_{2i}=\sum_{i=1}^me_{2i-1}$.  Let
$h=d+\sum_{i=1}^me_{2i}$ so that $M$ has dimension $4h$ over $k$.

A basis of $M$ is indexed by the words obtained by lifting each cyclic
word above to a word in all possible rotations. In the standard model
of $M(w)$, a basis vector associated to a word ending in $f$ is mapped
by $F$ to the basis vector corresponding to the first rotation of the
word, and a basis vector corresponding to a word ending with $v$ is
killed by $F$. (See \cite[3.2.1]{PriesUlmer21} for more details.)

We recall from \cite[\S3.5]{PriesUlmer21} a total order on words on
$\{f,v\}$: given two words $w_1$ and $w_2$, form powers $w_1^a$ and
$w_2^b$ of the same length, and then compare them in lexicographic
order (from the left with $f<v$). The canonical filtration of $M$ is
obtained by taking subspaces generated by the vectors associated to
words less than or equal to one of the words which appears (with
repetitions for multiplicities).

We now consider which words in $\WW$ are less than or equal to $fv$.
They are those which start with $f^2\dots$ or with $(fv)^jf^2\dots$
with $1\le j \le e_{2i-1}$ and $1\le i\le m$ as well as $fv$ itself
(taken with multiplicities). Note that all of these words end in $v$.
The total number of them is $2d+\sum_{i=1}^me_{2i-1}=h+d$. Since words
that end in $v$ are killed by $F$, we see that the subspace $M_{d+h}$
in the canonical filtration of $M$ with dimension $d+h$ is killed by
$F$, and this means that the Ekedahl--Oort structure associated to $M$
begins with $h+d$ zeroes.  This establishes the claim in part (3) of
the theorem.

  (4) We consider the $\D_k$-module $M$ with E--O structure $\Psi$ as
  constructed by Oort in \cite[\S9]{Oort01} and show that $M$ admits a
  $k[G]$-module structure which makes it admissible.

  Extend $\Psi$ to a ``final sequence'' $[\psi_1,\dots,\psi_{4h}]$ by
  setting $\psi_{4h}=2h$ and $\psi_{4h-i}=\psi_i+2h-i$ for $1\le i\le
  2h$.  Let $1\le m_1<m_2<\cdots<m_{2h}\le 4h$ be the indices $i$ such
  that $\psi_{i-1}<\psi_i$ and let $1\le n_{2h}<n_{2h-1}<\cdots
  n_1\le 4h$ be the indices $i$ such that $\psi_{i-1}=\psi_i$.  Our
  hypothesis on $\Psi$ implies that $m_{h+i}=3h+i$ for $1\le i\le h$
  and $n_{2h+1-i}=i$ for $1\le i\le h$.  Moreover, $m_i+n_i=4h+1$ for
  $1\le i\le 2h$.

  Now let $M$ be the $k$-vector space with basis $Z_1,\dots,Z_{4h}$
  and for $1\le i\le 2h$ define $X_i=Z_{m_i}$ and $Y_i=Z_{n_i}$.
  Define a $\D_k$ module structure on $M$ by setting
  \[ F(X_i)=Z_i,\quad F(Y_i)=0,\quad V(Z_i)=0,\and V(Z_{4h+1-i})=Y_i
      \quad\text{for $1\le i\le 2h$}.\]
  Introduce a bilinear pairing $\<\cdot,\cdot\>$ on $M$ by setting
  \[\<X_i,X_j\>=0,\quad\<Y_i,Y_j\>=0,\and\<X_i,Y_j\>=\delta_{ij}
    \quad\text{for $1\le i,j\le 2h$}.\]
  It is then straightforward to check that $M$ is a self-dual, local-local
  $BT_1$ module whose elementary sequence is the given $\Psi$.

  Let $N$ be the subspace of $M$ spanned by $X_i+Y_i$ and $Y_{h+i}$
  for $1\le i\le h$.  Then $N$ is a $\D_k$-submodule and we have
  \[F(N)=V(N)=\text{Span of $Y_{h+i}$ for $1\le i\le h$.}\]
  Since $F(Y_{h+i})=V(Y_{h+i})=0$ for $1\le i\le h$, it follows that
  $N$ is superspecial.

  The quotient $M/N$ is spanned by the classes of $X_j$ for $1\le j\le
  2h$, and we have
  \[F(M/N)=V(M/N)=\text{Span of the classes of $X_i$ for $1\le i\le
      h$.}\]
  Since $F(X_{i})=V(X_{i})=0\pmod N$ for $1\le i\le h$,  it follows
  that $M/N\cong N^\vee$ is also superspecial. Choosing a symmetric
  isomorphism $N^\vee\to N$
  and applying Lemma~\ref{lemma:p=2-R-structure}, we find the desired 
  $R$-module structure on $M$. 

 This completes the proof of the Theorem.
\end{proof}

\begin{s-example}\label{ex:indecomp}
Let $p=2$ and consider the $\D_k[G]$-module $M$ of rank $2h=4$ over $R$ given
as in Proposition~\ref{prop:coords}(4) by $D=\delta D_1$ with
\[D_1=
  \begin{pmatrix}
    1&1\\1&1
  \end{pmatrix}.\]
Note that $D_1^2=0$.  The Ekedahl--Oort structure associated to $M$ is
$[0,0,0,1]$ and the associated set of words is $(f^2v^2),(fv)^2$.  The
submodule $M_7:=V^{-1}FM$ has dimension 7, and one checks that if
$m\in M\setminus M_7$, then the $\D_k[G]$-submodule of $M$ generated by
$m$ has dimension 6 and is isomorphic as $R$-module to $R^2+\delta
R^2$.  In particular, the smallest free $R$-submodule of $M$
containing it is all of $M$.  This proves that $M$ is indecomposable
as a $\D_k[G]$-module.  Note by contrast that the $\D_k$-module
associated to $(f^2v^2),(fv)^2$ in part (2) of Theorem~\ref{thm:p=2}
decomposes into a free $R$-module of rank 2 supporting the
$\D_k$-module $(fv)^2$ and another of rank 2 supporting $(f^2v^2)$.
This shows that not every admissible $\D_k[G]$-module arises from the
construction in part (2).
\end{s-example}

\bibliography{database}

\end{document}